\documentclass[oneside,english]{amsart}
\usepackage[T1]{fontenc}
\usepackage[latin9]{inputenc}
\usepackage{amsthm,amssymb,youngtab,verbatim,fullpage}
\raggedbottom
\makeatletter
\numberwithin{equation}{section}
\numberwithin{figure}{section}
\theoremstyle{plain}
\newtheorem{thm}{\protect\theoremname}[section]
  \theoremstyle{definition}
  \newtheorem{defn}[thm]{\protect\definitionname}
  \theoremstyle{definition}
  \newtheorem{example}[thm]{\protect\examplename}
  \theoremstyle{plain}
  \newtheorem{lem}[thm]{\protect\lemmaname}
  \theoremstyle{plain}
  \newtheorem{prop}[thm]{\protect\propositionname}
  \theoremstyle{plain}
  \newtheorem{claim}[thm]{\protect\claimname}
  \theoremstyle{plain}
  \newtheorem{observation}[thm]{\protect\observationname}
  \theoremstyle{plain}
  \newtheorem{cor}[thm]{\protect\corollaryname}
  \theoremstyle{plain}
  \newtheorem{conj}[thm]{\protect\conjecturename}
   \theoremstyle{plain}
  \newtheorem{remark}[thm]{\protect\remarkname}

\makeatother

\usepackage{babel}
  \providecommand{\definitionname}{Definition}
  \providecommand{\examplename}{Example}
  \providecommand{\lemmaname}{Lemma}
  \providecommand{\propositionname}{Proposition}
   \providecommand{\corollaryname}{Corollary}
   \providecommand{\theoremname}{Theorem}
   \providecommand{\claimname}{Claim}
  \providecommand{\observationname}{Observation}
  \providecommand{\conjecturename}{Conjecture}
  \providecommand{\remarkname}{Remark}
  
\DeclareMathOperator{\Domain}{\textrm{Domain}}
\DeclareMathOperator{\Range}{\textrm{Range}}
\begin{document}
\global\long\def\connected{\text{highly connected}}
\global\long\def\f{\mathcal{F}}
\global\long\def\pn{\mathcal{P}\left(\left[n\right]\right)}
\global\long\def\g{\mathcal{G}}
\global\long\def\l{\mathcal{L}}
\global\long\def\s{\mathcal{S}}
\global\long\def\j{\mathcal{J}}
\global\long\def\d{\mathcal{D}}
\global\long\def\Cay{\mathrm{Cay}}
\global\long\def\GL{\mathrm{GL}}
\global\long\def\Inf{}
\global\long\def\Id{\textrm{Id}}
\global\long\def\Tr{\mathrm{Tr}}
\global\long\def\sgn{\textrm{sgn}}
\global\long\def\p{\mathcal{P}}
\global\long\def\h{\mathcal{H}}
\global\long\def\n{\mathbb{N}}
\global\long\def\a{\mathcal{A}}
\global\long\def\b{\mathcal{B}}
\global\long\def\c{\mathcal{C}}
\global\long\def\e{\mathbb{E}}
\global\long\def\x{\mathbf{x}}
\global\long\def\y{\mathbf{y}}
\global\long\def\z{\mathbf{z}}
\global\long\def\c{\mathbf{c}}
\global\long\def\av{\mathsf{A}}
\global\long\def\chop{\mathrm{Chop}}
\global\long\def\stab{\mathrm{Stab}}
\global\long\def\Span{\mathrm{Span}}
\global\long\def\codim{\mathrm{codim}}
\global\long\def\Var{\mathrm{Var}}
\global\long\def\rank{\mathrm{rank}}
\global\long\def\t{\mathsf{T}}

\title[Juntas in the symmetric group, and forbidden intersection problems]{Approximation by juntas in the symmetric group,\\and forbidden intersection problems}
\author{David Ellis}
\address{School of Mathematics, University of Bristol, Fry Building, Woodland Road, Bristol, BS8 1UG, United Kingdom.}
\email{david.ellis@bristol.ac.uk}
\author{Noam Lifshitz}
\address{Einstein Institute of Mathematics, Hebrew University of Jerusalem, Edmond J. Safra Campus, Givat Ram, Jerusalem 91904, Israel.}
\email{noamlifshitz@gmail.com}

\date{December 2019}

\begin{abstract}
A family of permutations $\f\subset S_{n}$ is said to be \emph{$t$-intersecting} if any two permutations in $\f$ agree on at least $t$ points. It is said to be \emph{$(t-1)$-intersection-free} if no two permutations in $\f$ agree on exactly $t-1$ points. If $S,T \subset \{1,2,\ldots,n\}$ with $|S|=|T|$, and $\pi: S \to T$ is a bijection, the \emph{$\pi$-star in $S_n$}
is the family of all permutations in $S_n$ that agree with $\pi$ on all of $S$. An {\em $s$-star} is a $\pi$-star such that $\pi$ is a bijection between sets of size $s$.

Friedgut and Pilpel, and independently the first author (see \cite{efp}), showed that if $\f \subset S_n$ is $t$-intersecting, and $n$ is sufficiently large depending on $t$, then $|\f| \leq (n-t)!$; this proved a conjecture of Deza and Frankl from 1977. Equality holds only if $\f$ is a $t$-star.

The Erd\H{o}s-S\'os forbidden intersection problem (for uniform hypergraphs) has been intensively studied; it has been resolved in many cases but is still open in full generality. In this paper, we consider the forbidden intersection problem for families of permutations. We prove a considerable strengthening of the Deza-Frankl conjecture, namely that if $n$ is sufficiently large depending on $t$, and $\f \subset S_n$ is $(t-1)$-intersection-free, then $|\f| \leq (n-t)!$, with equality only if $\f$ is a $t$-star.

The proof is rather different to the original proof of the Deza-Frankl conjecture, and is quite a lot more robust. The main ingredient is a `junta approximation' result, namely, that any $(t-1)$-intersection-free family of permutations is essentially contained in a $t$-intersecting {\em junta} (a `junta' being a union of a bounded number of $O(1)$-stars). The proof of our junta approximation result relies, in turn, on (i) a weak regularity lemma for families of permutations (which outputs a `junta' whose stars are intersected by $\f$ in a weakly pseudorandom way), (ii) a combinatorial argument that `bootstraps' the weak notion of pseudorandomness into a stronger one, and finally (iii) a spectral argument for pairs of highly-pseudorandom fractional families (this spectral argument being significantly shorter than the spectral argument in \cite{efp}, though still non-trivial). Our proof employs four different notions of pseudorandomness, three being combinatorial in nature, and one being algebraic. The connection we demonstrate between these combinatorial and algebraic notions of psuedorandomness may find further applications.

We also use our junta approximation result to prove some new stability results on $t$-intersecting families and $(t-1)$-intersection-free families.
\end{abstract}

\maketitle

\section{Introduction}
Erd\H{o}s-Ko-Rado type problems are an important class of problems within Extremal Combinatorics. In general, an Erd\H{o}s-Ko-Rado type problem asks for the maximum possible size of a family of objects, subject to some intersection condition on pairs of objects in the family. For example, we say a family of sets is {\em intersecting} if any two sets in the family have nonempty intersection. The classical Erd\H{o}s-Ko-Rado theorem \cite{ekr} states that if $k < n/2$, an intersecting family of $k$-element subsets of $\{1,2,\ldots,n\}$ has size at most ${n-1 \choose k-1}$, and that if equality holds, then the family must consist of all $k$-element subsets containing some fixed element. Over the last sixty years, many other Erd\H{o}s-Ko-Rado type results have been obtained, for different mathematical structures (e.g.\ for families of graphs \cite{cfgs,triangle}, and families of partitions \cite{meagher}) and under different intersection conditions on the sets in the family. We mention in particular the seminal theorem of Ahlswede and Khachatrian \cite{ak} which specifies, for each $(n,k,t) \in \mathbb{N}^3$, the largest possible size of a $t$-intersecting family of $k$-element subsets of $\{1,2,\ldots,n\}$. (We say a family of sets is {\em $t$-intersecting} if any two sets in the family have intersection of size at least $t$.) As well as being natural in their own right, Erd\H{o}s-Ko-Rado type questions have found applications in Computer Science and Coding Theory, and the techniques developed in solving them have found wide applicability in many other areas of Mathematics. The reader is referred to \cite{DF83,MV15} for surveys of this area of research and its applications; in fact, the  latter deals with the broader subject of {\em Tur\'an-type} problems, which ask for the maximum possible size of some structure, subject to a certain substructure being forbidden.

One particularly challenging type of Erd\H{o}s-Ko-Rado problem concerns what happens when just one intersection-size is forbidden. The {\em Erd\H{o}s-S\'os forbidden intersection problem} \cite{erdos-sos} is to determine, for each $(n,k,t) \in \mathbb{N}^3$, the maximum possible size of a family of $k$-element subsets of $\{1,2,\ldots,n\}$ such that no two sets in the family have intersection of size exactly $t-1$. This problem remains open in full generality, unlike the $t$-intersection problem above (solved by Ahlswede and Khachatrian), though it has been solved for quite a wide range of the parameters, by Frankl and F\"uredi \cite{ff}, Keevash, Mubayi and Wilson \cite{kmw}, Keller and the authors \cite{ekl}, and Keller and the second author \cite{keller-lifshitz}. These solutions have involved a wide range of methods (combinatorial, probabilistic, algebraic and Fourier-analytic), some of which have found important applications elsewhere. For example, the work of Frankl and F\"uredi \cite{ff} was one of the first uses of their widely-applicable `delta-system method', and the work of Keller and the second author \cite{keller-lifshitz} involved a broad extension of the `junta method', which we will discuss below.
 
 In this paper, we obtain forbidden intersection theorems for families of permutations. The study of Erd\H{o}s-Ko-Rado problems for families of permutations goes back to Deza and Frankl \cite{df}. For $n \in \mathbb{N}$, we let $S_n$ denote the symmetric group on $\{1,2,\ldots,n\} =:[n]$, i.e.\ the set of all bijections from $[n]$ to itself. We say a family of permutations $\f \subset S_n$ is {\em intersecting} if any two permutations in $\f$ agree on at least one point, i.e.\ if for any $\sigma,\tau \in \f$, there exists $i \in [n]$ such that $\sigma(i)=\tau(i)$. Deza and Frankl proved that if $\f \subset S_n$ is intersecting, then $|\f| \leq (n-1)!$. (Cameron and Ku \cite{ck} later proved that equality holds only if $\f$ consists of the coset of a stabilizer of a point.)
 
 We say a family of permutations $\f \subset S_n$ is {\em $t$-intersecting} if any two permutations in $\f$ agree on at least $t$ points, i.e.\ if for any $\sigma,\tau \in \f$, we have $|\{i \in [n]: \sigma(i)=\tau(i)\}| \geq t$. Deza and Frankl conjectured in \cite{df} that for any $t \in \mathbb{N}$, if $n$ is sufficiently large depending on $t$, then any $t$-intersecting family $\f \in S_n$ satisfies $|\f| \leq (n-t)!$ (which is the size of the $t$-intersecting family $\{\sigma \in S_n: \sigma(i)=i \ \forall i \in [t]\}$). This remained open for 31 years, and became known as the Deza-Frankl conjecture. It was proved by Friedgut and Pilpel, and independently and simultaneously by the first author, in 2008 (see \cite{efp}). The proof uses a spectral argument, combined with non-Abelian Fourier analysis on $S_n$ (a.k.a.\ representation theory of $S_n$); it is, to our knowledge, the first application of non-Abelian Fourier analysis to prove an exact result in Extremal Combinatorics. We remark that it was also claimed in \cite{efp} that equality holds in the Deza-Frankl conjecture only if $\f$ is the pointwise stabilizer of a $t$-element set (provided $n$ is large enough depending on $t$). This statement is true, but unfortunately, the proof of the equality case in \cite{efp} contained a hole (pointed out by Filmus \cite{filmus-comment}). An alternative proof of the equality case (and of a much stronger `stability' statement) is to be found in \cite{ellis-dfs}.

The Erd\H{o}s-S\'os forbidden intersection problem has a natural analogue for families of permutations. For $t \in \mathbb{N}$, we say a family of permutations $\f \subset S_n$ is {\em $(t-1)$-intersection-free} if no two permutations in $\f$ agree on exactly $t-1$ points, i.e.\ if for any two distinct $\sigma,\tau \in \f$, we have $|\{i \in [n]:\ \sigma(i)=\tau(i)\}| \neq t-1$. Clearly, a $t$-intersecting family is $(t-1)$-intersection-free, and the first author conjectured in \cite{ellis-forbidden} that for any $t \in \mathbb{N}$, the conclusion of the Deza-Frankl conjecture holds under the weaker condition that $\f$ is $(t-1)$-intersection-free (provided $n$ is sufficiently large depending on $t$). In the case $t=1$, a family is $t$-intersecting if and only if it is $(t-1)$-intersection-free, so there is nothing to prove in this case. The first author verified his conjecture in the case $t=2$, using a spectral argument to prove the conjecture asymptotically, and then a stability argument to prove it exactly (in this case). However, these techniques broke down for all $t >2$, and all other cases of the conjecture remained open.

It is in place to remark that recently, Keevash and Long \cite{kl} considered $t$-intersection-free families in $S_n$, where $t = \Theta(n)$. They proved that for any $\epsilon >0$, there exists $\delta = \delta(\epsilon)>0$ such that if $\epsilon n \leq t \leq (1-\epsilon)n$, and $\f \subset S_n$ is $t$-intersection-free, then $|\f| \leq (n!)^{1-\delta}$. (For each $\epsilon >0$, this result is sharp up to the value of $\delta = \delta(\epsilon)$, as is evidenced by applying Tur\'an's theorem to the Cayley graph on $S_n$ generated by the set of permutations with exactly $t$ fixed points. The dependence of $\delta(\epsilon)$ upon $\epsilon$, however, is likely to be far from best-possible.)

In this paper, we develop new, more robust techniques that enable us to solve the above-mentioned forbidden intersection conjecture of the first author, which strengthens the Deza-Frankl conjecture and resolves the permutation analogue of the Erd\H{o}s-S\'os forbidden intersection problem (for large $n$). In fact, we prove the following.
 
 \begin{thm}
\label{thm:extremal-forbidden}
If $t \in \mathbb{N}$, $n$ is sufficiently large depending on $t$, and $\f \subset S_n$ is $(t-1)$-intersection-free, then $|\f| \leq (n-t)!$, with equality only if $\f$ consists of a coset of the pointwise-stabilizer of a $t$-element set.
\end{thm}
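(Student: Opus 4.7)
The strategy is to reduce the extremal problem for $(t-1)$-intersection-free families to the extremal problem for $t$-intersecting families via a \emph{junta approximation}: the plan is to first show that any $(t-1)$-intersection-free family $\f \subset S_n$ is essentially contained in a $t$-intersecting junta $\j$ (a union of a bounded number $C = C(t)$ of $O(t)$-stars). Since a $t$-intersecting junta is in particular a $t$-intersecting family, and since the Deza-Frankl theorem (cited as proved in \cite{efp}) already gives $|\j| \leq (n-t)!$ with equality only for a $t$-star, once such an approximation is in hand we will try to upgrade it to the exact statement via a short bootstrapping argument: any permutation $\sigma \in \f \setminus \j$ would create, together with most of $\j$, a pair agreeing on roughly the ``wrong'' number of points, and in particular one can arrange agreement on exactly $t-1$ coordinates, contradicting the forbidden-intersection hypothesis.

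To prove the junta approximation itself, my plan is to set up an iterative weak regularity procedure on $S_n$. Given a family $\f$, one picks a small set of coordinates $U \subset [n]$ and partitions $\f$ according to how its permutations act on $U$; each piece is then a subfamily of a star. If some piece has density (inside its star) that is neither negligible nor close to $1$, but is not yet pseudorandom (in an appropriate combinatorial sense measuring how $\f$ meets smaller stars), one refines by adding more coordinates to $U$. The key is to show that this terminates after $C(t)$ steps, producing a junta $\j$ such that outside $\j$ the family has negligible mass, while inside each star of $\j$ either the density is small or the restriction is combinatorially pseudorandom. The intermediate step will be a combinatorial bootstrap showing that the weak pseudorandomness output by regularity can be upgraded to a stronger notion (controlling densities in all low-complexity stars).

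The algebraic heart of the argument is the following pseudorandomness-rules-out-forbidden-intersections step. Given two subfamilies $\a, \b$ that are highly pseudorandom in the combinatorial sense inside their respective stars, I will translate this into spectral information about the indicator functions of $\a$ and $\b$ viewed as elements of $\mathbb{R}[S_n]$: the non-trivial Fourier mass (over the irreducible representations of $S_n$ indexed by partitions whose first row is smaller than $n-t$) must be small. One then counts pairs $(\sigma, \tau) \in \a \times \b$ with $|\mathrm{Fix}(\sigma\tau^{-1})|=t-1$ by expanding the indicator of the conjugacy class of permutations with exactly $t-1$ fixed points in the group algebra; the leading Fourier coefficients reproduce the expected count $|\a||\b| \binom{n}{t-1} D_{n-t+1}/n!$, and the pseudorandomness ensures all other Fourier contributions are lower-order. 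Thus if $\a, \b$ are each non-negligible and pseudorandom, the count is positive, producing the forbidden intersection and yielding a contradiction. This forces every pseudorandom piece to be either empty or almost-full, establishing that $\f$ is essentially the union of a few full stars, i.e., contained in a $t$-intersecting junta.

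The main obstacle I expect is the spectral step: carrying out the Fourier expansion on $S_n$ and relating combinatorial pseudorandomness (densities inside stars) to a bound on non-trivial Fourier mass is delicate, because the stars correspond to a non-orthogonal system of characteristic vectors and the relevant representations live in the ``top'' of the Young's lattice near the trivial partition. Controlling the error terms uniformly in $n$, while keeping the junta size depending only on $t$, is where the main work lies; establishing the regularity lemma and the combinatorial bootstrap are also technically involved but should follow a recognisable increment-of-energy template adapted from the Boolean hypercube and hypergraph settings to the non-product measure on $S_n$.
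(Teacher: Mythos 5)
Your overall strategy — junta approximation via weak regularity, a combinatorial bootstrap to upgrade pseudorandomness, and a spectral step to rule out forbidden intersections between pseudorandom slices, followed by a perturbation argument for the extremal statement — is exactly the paper's. Two smaller divergences: the paper's deduction of Theorem \ref{thm:extremal-forbidden} from the junta approximation is self-contained (it shows directly, by a cardinality comparison, that every constituent bijection of $\j$ has domain of size exactly $t$ and that they all coincide, so $\j$ is a single $t$-star; it never invokes the Deza--Frankl theorem from \cite{efp}, which would import exactly the machinery the new proof is designed to replace), and the paper reduces first to the case $t=1$ so that the spectral argument uses only the derangement graph rather than expanding against the Cayley graph of $(t-1)$-fixed-point permutations as you propose. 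The latter could in principle be made to work (the eigenvalue estimates generalise, cf.\ the $t=2$ case in \cite{ellis-forbidden}), but the derangement-graph normalisation is cleaner.

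The genuine gap is in the spectral step. You propose to regard the indicators $1_\a,1_\b$ of pseudorandom slices $\a\subset S_n(\pi_1)$, $\b\subset S_n(\pi_2)$ as elements of $\mathbb{R}[S_n]$ and to argue that their Fourier mass on partitions with $\alpha_1<n-t$ is small. Even granting that, the argument does not close: for a function supported on (a subfamily of) a star with domain of size $k$, essentially all of its nontrivial Fourier mass sits on the representations $W_\alpha$ with $n-k\le \alpha_1\le n-1$, and there the eigenvalues of the relevant Cayley graph are only $O(1/\mathsf{f}^\alpha)$, which exactly cancels the natural upper bound $\|P_\alpha(1_\a)\|_2\|P_\alpha(1_\b)\|_2=O(\mathsf{f}^\alpha\,\mathbb{E}[1_\a]\,\mathbb{E}[1_\b])$ to give a contribution of the \emph{same} order as the main term. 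To obtain a contradiction one needs the stronger algebraic quasirandomness $\|P_\alpha(f)\|_2^2\le \epsilon\,\mathsf{f}^\alpha(\mathbb{E}[f])^2$ with small $\epsilon$ on precisely those top representations, and no combinatorial quasiregularity \emph{inside} the star can deliver this for the raw indicator on $S_n$ — the constant function on the star already saturates the bound with $\epsilon=\Theta(1)$. The paper's resolution is a sequence of reductions that factors the star structure out before the spectral lemma is invoked: probabilistically adjoin a bijection on $t-1-u$ extra points so that $\pi_1,\pi_2$ agree on exactly $t-1$ places (Proposition \ref{prop:extra-agreements}); use the matching-theoretic Claims \ref{claim:cycles}--\ref{claim:odd-paths} to force $\pi_1$ and $\pi_2$ to have a common domain and range; delete the agreed-upon coordinates to reduce to $t=1$; and finally replace the restricted families by $[0,1]$-valued fractional families on a common $S_{\hat n}$ via the fixing operators $\mathcal{S}_{\pi,\overline{\pi'}}$. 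Only after these steps do the quasiregular slices yield functions on one symmetric group to which Lemma \ref{lem:fourier} applies. Without a concrete substitute for this reduction your spectral count is inconclusive, not merely technically delicate.
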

 
 Our main tool for proving Theorem \ref{thm:extremal-forbidden} is the following `junta approximation' result. To state it, we need some more notation and terminology. If $S,T \subset \{1,2,\ldots,n\}$ with $|S|=|T|$, and $\pi: S \to T$ is a bijection, the \emph{$\pi$-star in $S_n$} is the family of all permutations in $S_n$ that agree with $\pi$ pointwise on all of $S$. An {\em $s$-star} is a $\pi$-star such that $\pi$ is a bijection between sets of size $s$. (Note that an $s$-star is precisely a coset of the pointwise-stabilizer of an $s$-element set; in this paper, we henceforth use the `star' terminology, as it is more concise.) If $n$ is understood, and for each $i \in [l]$, $S_i,T_i \subset [n]$ and $\pi_i:S_i \to T_i$ is a bijection, we define
$$\langle \pi_1,\ldots,\pi_l \rangle := \{\sigma \in S_n:\ (\exists i \in [l])(\forall j \in S_i)(\sigma(j)=\pi(j))\},$$
i.e., $\langle \pi_1,\ldots,\pi_l \rangle$ is the set of all permutations in $S_n$ that agree everywhere with at least one of the bijections $\pi_i$. We say that $\j \subset S_n$ is a {\em $C$-junta} if $\j = \langle \pi_1,\ldots,\pi_l\rangle$ for some bijections $\pi_i :S_i \to T_i$, where $l \leq C$ and $|S_i| \leq C$ for all $i \in [l]$. We remark that, if $\j \in S_n$ is a $C$-junta, then there exists a set $J \subset [n]$ with $|J| \leq C^2$ such that for any $\sigma \in S_n$, the question of whether or not $\sigma \in \j$ depends only upon the ordered set $(\sigma(j):\ j \in J)$; this may justify the use of the term `junta'. We think of $C$ as (an upper bound on) the `complexity' of the junta $\j$.

We can now state our `junta approximation' result.

\begin{thm}
\label{thm:junta-approximation}
For any $r,t \in \mathbb{N}$, there exists $C=C(r,t) \in \mathbb{N}$ such that if $\f \subset S_n$ is $(t-1)$-intersection-free, there exists a $t$-intersecting $C$-junta $\j \subset S_n$ such that $|\f \setminus \j| \leq Cn!/n^r$.
\end{thm}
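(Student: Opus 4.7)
The plan follows the three-stage strategy signalled in the abstract. First, I would establish a weak regularity lemma for $\f$. Starting from the trivial partition $\{S_n\}$, iteratively refine by appending any $O(1)$-star whose further restriction changes the density of $\f$ inside some current atom by more than a parameter $\epsilon$. A standard $L^2$-energy increment argument forces termination in $C_0 = C_0(\epsilon)$ steps. The output is a $C_0$-junta $\j_0 = \langle \pi_1, \ldots, \pi_{l_0} \rangle$ together with a partition of $S_n$ into atoms (each a star of complexity at most $C_0$) on which the density of $\f$ is stable under all small-star refinements; call this \emph{weak pseudorandomness}.

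Next I would bootstrap weak pseudorandomness into a much stronger notion: within each atom, the density of $\f$ on any star of complexity up to some $C_1 = C_1(r,t)$ differs from the ambient density by at most $\eta$. This step is essentially combinatorial; the idea is that a density spike on a medium-sized star must, after a carefully chosen sub-refinement, produce a small-star spike already violating the weak regularity output of Step 1. A sunflower-style or greedy absorption argument should suffice, at the cost of enlarging $C_0$ to $C_1$.

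The heart of the proof is the spectral step. For any two atoms $A$ and $A'$ of the refined partition, corresponding to bijections $\pi: S \to T$ and $\pi': S' \to T'$ with relative densities of $\f$ in $A$ and $A'$ both exceeding $n^{-r-O(1)}$, I would consider the bipartite ``agreement-$(t-1)$'' structure whose edges are pairs $(\sigma,\tau) \in (\f \cap A) \times (\f \cap A')$ agreeing on exactly $t-1$ points. Using the strong pseudorandomness produced by Step 2 to ensure that the indicator functions of $\f\cap A$ and $\f\cap A'$ are approximately orthogonal to the low-dimensional isotypic components of $S_n$, an eigenvalue estimate for the corresponding bipartite adjacency operator — a localised and quantitatively crude version of the argument in \cite{efp} — forces this edge count to be positive whenever $\pi$ and $\pi'$ agree on fewer than $t$ common points. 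Since $\f$ is $(t-1)$-intersection-free this is a contradiction, so every pair of bijections corresponding to large-density atoms must agree on at least $t$ points. Letting $\j$ be the union of atoms of density at least $n^{-r-O(1)}$, the junta $\j$ is therefore $t$-intersecting and $|\f \setminus \j| \le C n!/n^r$.

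The main obstacle is clearly the spectral step, and specifically the interplay between the pseudorandomness produced by the combinatorial bootstrap and the spectral bounds on the ``$(t-1)$-agreement'' operator between two stars. One advantage over \cite{efp} is that here we only need a non-vanishing edge count rather than a tight bound, so some slack is available; on the other hand, we now need the character-theoretic estimates relative to arbitrary pairs of stars rather than to $S_n$ itself. Balancing the pseudorandomness parameter $\eta$ against the density threshold $n^{-r-O(1)}$ and the relevant spectral gap, while keeping the junta complexity $C$ bounded in terms of $r$ and $t$, will drive the quantitative choices throughout the proof.
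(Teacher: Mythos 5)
Your three-stage architecture (weak regularity, combinatorial bootstrap, spectral contradiction) matches the paper's, and the regularity lemma (Proposition \ref{prop:Regularity lemma}) does proceed by iterative refinement much as you sketch, albeit with ``uncaptureability'' of the slices $\f(\pi_i)$ rather than an $L^2$-energy increment as the stopping criterion. The substantive problems lie in Steps 2 and 3.

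In Step 2 you ask, after bootstrapping, for the density of $\f$ inside each atom to be stable under restriction to \emph{all} stars of complexity up to some fixed $C_1(r,t)$ --- that is, $(C_1,1+\eta)$-quasiregularity of the slices themselves, achieved by a bounded-complexity refinement. The paper makes no such claim, and I do not think one can: the bootstrapping lemmas (Lemma \ref{lem:uncap-quasi}, Lemma \ref{lem:uncap-very-quasi}) produce restricting bijections whose domains have size growing polynomially in $n$ (of order $n^{1/3}\log n$ with the paper's choice of $\epsilon$) and which depend on the particular pair of slices being compared. These cannot contribute to the output junta without blowing up its complexity. The correct organisation, which you should adopt, is: the output junta $\j$ is the one produced by the \emph{weak} regularity lemma alone, and the bootstrap (together with all subsequent surgery) is a purely local argument run separately for each pair $(\pi_i,\pi_j)$ of generators of $\j$, aimed only at deriving a contradiction if $\pi_i$ and $\pi_j$ agree on fewer than $t$ points.

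In Step 3 you propose a ``localised'' eigenvalue estimate for the bipartite $(t-1)$-agreement operator between two stars with different domains and ranges, and you correctly flag it as the main obstacle. It is more than an obstacle: when $\pi$ and $\pi'$ differ, that bipartite relation on $S_n(\pi)\times S_n(\pi')$ is not a (coset of a) normal Cayley graph, and there is no clean expansion of its singular values in terms of irreducible characters of a single symmetric group. The paper avoids needing any such localised estimate by reductions you do not anticipate: (a) a random-sampling step (Proposition \ref{prop:extra-agreements}) appends a common bijection to both stars, raising the agreement to exactly $t-1$ and reducing the task to finding two permutations disagreeing \emph{everywhere}; (b) a matching surgery (Claims \ref{claim:cycles}, \ref{claim:even-paths}, \ref{claim:odd-paths}) eliminates the misaligned ``type-2'' edges, shrinking $n$ by $O(\sqrt{n})$ while keeping quasiregularity; (c) a fixing-operator averaging replaces the two subfamilies by $[0,1]$-valued functions $f,g$ on a single $S_{\hat n}$ placing zero joint weight on derangement pairs. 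Only after (a)--(c) does the spectral step (Lemma \ref{lem:fourier}) kick in, and it needs nothing beyond the classical eigenvalues of the normal Cayley graph $\Cay(S_{\hat n},\mathcal{D}_{\hat n})$ together with the algebraic quasirandomness of $f,g$ (deduced from combinatorial quasiregularity via the determinantal formula in Lemma \ref{lem:comb-alg}). Without these reductions your plan has no derangement graph to latch onto, and the localised bipartite eigenvalue estimate you would need is itself an open problem.
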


Informally, this theorem says that any $(t-1)$-intersection-free family is `almost' contained within a $t$-intersecting junta of bounded complexity.

We remark that the above definition of a `junta' in $S_n$ is a natural analogue of a monotone increasing junta in $\mathcal{P}([n])$. Some more terminology: for a set $X$, we write $\mathcal{P}(X)$ for the power-set of $X$, and for $j \in \mathbb{N}$, we say a family of subsets $\mathcal{F} \subset \mathcal{P}([n])$ is a {\em $j$-junta} if there exists a set $J \subset [n]$ with $|J| \leq j$, and a family $\mathcal{G} \subset \mathcal{P}(J)$, such that for any $S \subset [n]$, we have $S \in \mathcal{F}$ if and only if $S \cap J \in \mathcal{G}$ (in other words, the membership in $\mathcal{F}$ of a set $S$ is determined purely by $S \cap J$). Thus, informally speaking, we may refer to $\mathcal{F}$ as a `junta' if it depends only upon a bounded number of coordinates. We say a family $\mathcal{F} \subset \mathcal{P}([n])$ is {\em monotone increasing} if whenever $S \in \f$ and $T \supset S$, we have $T \in \f$.

Juntas in $\mathcal{P}([n])$ have been widely used in Extremal Combinatorics and Theoretical Computer Science over the last twenty years; in particular, many theorems in Extremal Combinatorics have been proved by first approximating families with a given property by juntas, and then using a `perturbation' (or `stability') argument to show that the largest families with the given property are juntas. (This is known as the `junta method'.) One of the first uses of the junta method in Extremal Combinatorics was in the work of Dinur and Friedgut \cite{dinur-friedgut} on the approximation of intersecting families by `dictatorships', and other juntas. Recently, Keller and the second author \cite{keller-lifshitz} substantially generalised the junta method to deal with a large class of Tur\'an-type problems for hypergraphs (known as {\em Tur\'an problems for expansions}), and indeed, part of our strategy in this paper is to generalize some of the arguments in \cite{keller-lifshitz} to the symmetric group setting, demonstrating the flexibility of the aforesaid arguments.

Theorem \ref{thm:extremal-forbidden} follows from Theorem \ref{thm:junta-approximation} via a short and purely combinatorial argument. The main work of this paper is therefore in proving Theorem \ref{thm:junta-approximation}.

Our proof of Theorem \ref{thm:junta-approximation} employs a mixture of combinatorial, probabilistic and algebraic techniques. The first step is to prove a weak regularity lemma for families of permutations (Proposition \ref{prop:Regularity lemma}). This states that for {\em any} family of permutations $\f \subset S_n$ and any $r,s \in \mathbb{N}$, there exists an $O_{r,s}(1)$-junta 
$$\j = \langle \pi_1,\ldots,\pi_l\rangle$$
such that for each $i \in [l]$, the `slice'
$$\f(\pi_i) : = \{\sigma \in \f:\ \sigma(j)=\pi_i(j)\quad \forall j \in \Domain(\pi_i)\}$$
consisting of all the permutations in $\f$ agreeing pointwise with $\pi_i$, satisfies a weak pseudorandomness condition, which we term {\em $(s,n^{-r})$-uncaptureability}. (We say that $\mathcal{F}(\pi_i)$ is {\em $(s,n^{-r})$-uncaptureable} if for any bijection $\pi$ with domain disjoint from that of $\pi_i$, at least an $(n^{-r})$-fraction of $\f(\pi_i)$ consists of permutations disagreeing everywhere with $\pi$.) To prove Theorem \ref{thm:junta-approximation}, it then suffices to show that, provided $n$ is sufficiently large depending on $r$ and $t$, if $\f \subset S_n$ is $(t-1)$-intersection free, the junta $\j$ supplied by our weak regularity lemma (applied to $\f$ with $s=2r-1$) is $t$-intersecting. This is equivalent (for $n$ large enough) to the statement that for any $i,j \in [l]$, the bijections $\pi_i$ and $\pi_j$ have domains of size at least $t$ and agree with one another on at least $t$ points. 

We then assume, for the sake of a contradiction, that $\pi_1$ and $\pi_2$ agree with one another on less than $t$ points. The first step is a `bootstrapping' step: given our $(s,n^{-r})$-uncaptureable families $\f(\pi_1)$ and $\f(\pi_2)$, we find large subfamilies $\f_i' \subset \f(\pi_i)$ (for each $i \in \{1,2\}$) that satisfy a stronger (but still combinatorial) notion of psuedorandomness, which we term {\em $(r,\epsilon)$-quasirandomness}: this says, roughly, that restricting the family only to those permutations agreeing with a uniform random bijection with domain of size $r$, cannot change the measure of the family by very much, on average. The bootstrapping step is achieved via combinatorial techniques, similar to those used in \cite{keller-lifshitz} in the setting of uniform hypergraphs. To simplify the later (algebraic) part of the proof, at this point we make two reductions. Firstly, we use a probabilistic argument to reduce to the case where $t=1$: specifically, we use a random sampling method to show that, at the cost of passing to very slightly smaller subfamilies satisfying a very slightly weaker quasirandomness constraint, we can replace $\pi_1$ and $\pi_2$ by two bijections with any specified (but bounded) number of extra agreements between them. Secondly, we use a combinatorial argument to reduce to the case where $\pi_1$ and $\pi_2$ have the same domain and the same range. 

The next step is to show that our notion of combinatorial quasirandomness implies an algebraic notion of quasirandomness. This step involves some classical results from the representation theory of the symmetric group, combined with a Cauchy-Schwarz argument. (We believe that this step may find independent applications.)

The final step is to use our algebraic quasirandomness property, combined with a spectral argument, to complete the proof that $\j$ is $t$-intersecting. Specifically, this step involves showing that if $f,g:S_{n} \to [0,1]$ are sufficiently (algebraically) quasirandom and have sufficiently large expectations, then there must exist two permutations $\sigma_1,\sigma_2 \in S_n$ such that $\sigma_1$ and $\sigma_2$ disagree everywhere, and $f(\sigma_1)g(\sigma_2) >0$. (Because of our combinatorial reductions in previous steps, we have to work with $[0,1]$-valued functions on $S_n$, i.e.\ `fractional subfamiles' of $S_n$, in this step, rather than with straighforward subfamilies of $S_n$.) The representation-theoretic bounds needed in this step are much simpler than those employed in \cite{efp}, largely because of the extra quasirandomness condition we can use.

We also use our junta approximation theorem to obtain a new and rather strong `1\% stability' result for families of permutations with a forbidden intersection. Stability results in Extremal Combinatorics describe the structure of `large' families of objects with a given mathematical property. Suppose we are studying a set of mathematical objects $U$, in which each object has an `order' in $\mathbb{N}$ (with $U_n$ denoting the set of objects in $U$ of order $n$), and we have an extremal theorem specifying, for each $n \in \mathbb{N}$, the maximum possible size $M(n)$ of a subfamily $\f$ of $U_n$ such that $\f$ has the mathematical property $P$. A `99\% stability result', in this context, describes the structure of subfamilies of $U_n$ with property $P$ that have size at least $(1-\epsilon)M(n)$, for $\epsilon$ sufficiently small (usually, giving non-trivial information whenever $\epsilon$ at most some small, positive absolute constant). A `1\% stability result', on the other hand, describes the structure of subfamilies of $U_n$ with property $P$ that have size at least $\epsilon M(n)$, giving non-trivial structural information even when $\epsilon$ tends to zero with $n$ (provided $\epsilon$ tends to zero sufficiently slowly with $n$). In our case, $U_n=S_n$ for each $n \in \mathbb{N}$, the property $P$ in question is that of being $(t-1)$-intersection-free, and $M(n) = (n-t)!$ for all $n$ sufficiently large depending on $t$. In \cite{ellis-dfs}, the first author obtained a 99\% stability result for $t$-intersecting families of permutations. We obtain the following 1\% stability result for $(t-1)$-intersection-free families of permutations.

\begin{thm}
\label{thm:stability}
For any $r,t \in \mathbb{N}$ such that $r > t$, there exists $K = K(r,t)>0$ such that the following holds. If $\f \subset S_n$ is $(t-1)$-intersection free with $|\f| \geq K(n-t-1)!$, then there exists a $t$-star $\g \subset S_n$ such that $|\f \setminus \g| \leq K(n-r)!$.
\end{thm}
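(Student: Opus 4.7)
The plan is to deduce Theorem \ref{thm:stability} as a short corollary of the junta approximation Theorem \ref{thm:junta-approximation}, by arguing that when $|\f|$ is large enough the $t$-intersecting junta it produces must collapse to a single $t$-star. Applying Theorem \ref{thm:junta-approximation} to $\f$ with the given $r,t$ yields a constant $C = C(r,t)$ and a $t$-intersecting $C$-junta $\j = \langle \pi_1,\ldots,\pi_l\rangle$ with $|\f \setminus \j| \leq Cn!/n^r \leq C(n-r)!$ (the second inequality since $n(n-1)\cdots(n-r+1) \leq n^r$). The task then reduces to exhibiting a $t$-star $\g$ with $\j \subseteq \g$, for then $|\f \setminus \g| \leq |\f \setminus \j| \leq C(n-r)! \leq K(n-r)!$ as soon as $K \geq C$.

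The structural heart of the argument is the following observation: if some $\pi_i$ occurring in $\j$ is itself a $t$-star, i.e.\ $|\Domain(\pi_i)|=t$, then every other $\pi_j$ in $\j$ must extend $\pi_i$. Indeed, the $t$-intersecting hypothesis forces $\pi_i$ and $\pi_j$ to agree on at least $t$ points, and these agreements must all lie in $\Domain(\pi_i) \cap \Domain(\pi_j) \subseteq \Domain(\pi_i)$, a set of size exactly $t$; hence $\Domain(\pi_i) \subseteq \Domain(\pi_j)$ and $\pi_j|_{\Domain(\pi_i)} = \pi_i$. Therefore $\langle \pi_j \rangle \subseteq \langle \pi_i \rangle$ for every $j$, so $\j = \langle \pi_i \rangle$ is itself a single $t$-star, and one takes $\g := \langle \pi_i \rangle$.

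It remains only to verify that some $\pi_i$ has domain of size exactly $t$; this is where the lower bound on $|\f|$ enters. Since $\j$ is $t$-intersecting, each $|\Domain(\pi_i)| \geq t$ automatically. Suppose for contradiction that $|\Domain(\pi_i)| \geq t+1$ for every $i$; then each $|\langle \pi_i \rangle| \leq (n-t-1)!$, so $|\j| \leq l(n-t-1)! \leq C(n-t-1)!$, and combining with $|\f \setminus \j| \leq C(n-r)! \leq C(n-t-1)!$ (using $r > t$, so $r \geq t+1$) gives $|\f| \leq 2C(n-t-1)!$, contradicting $|\f| \geq K(n-t-1)!$ as soon as $K > 2C$. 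Setting $K := 2C+1$ satisfies both $K > 2C$ and $K \geq C$, completing the argument. No real obstacle arises beyond Theorem \ref{thm:junta-approximation} itself: once a $t$-intersecting bounded-complexity junta has been extracted, the rigidity of $t$-stars forces it to collapse to a single one.
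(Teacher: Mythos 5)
Your proof is correct and follows essentially the same approach as the paper's: apply Theorem \ref{thm:junta-approximation}, rule out $|\Domain(\pi_i)| \geq t+1$ for all $i$ via the size lower bound on $\f$, and then use $t$-intersectingness of $\j$ to collapse it to a single $t$-star. The only difference is a minor reordering of exposition (you state the rigidity observation before the counting step), and your version is slightly more careful in concluding $\langle \pi_j\rangle \subseteq \langle \pi_i\rangle$ rather than $\pi_j = \pi_i$; both arguments implicitly use that the lower bound $|\f| \geq K(n-t-1)!$ forces $n$ to be large once $K$ is chosen large, which is what lets the $t$-intersecting property of $\j$ transfer to the generating bijections.
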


Even the weaker version of Theorem \ref{thm:stability}, where $(t-1)$-intersection-free is replaced by $t$-intersecting, was out of the reach of previous methods.

Given that our techniques in this paper are more robust than those in \cite{efp}, it is natural to ask whether they generalise to other infinite families of finite groups. This seems to be the case, and in a forthcoming paper with Guy Kindler, the authors prove analogues of Theorem \ref{thm:extremal-forbidden} for $\textrm{GL}(n,\mathbb{F}_q)$ and $\textrm{SL}(n,\mathbb{F}_q)$, for $n$ sufficiently large depending upon $q$ and $t$, using somewhat similar techniques to those in this paper, combined with a new hypercontractivity argument which may be of interest in its own right. The analogous problems for the orthogonal and symplectic groups over finite fields, on the other hand, currently seem out of reach.

The remainder of this paper is structured as follows. In Section \ref{sec:background}, we outline the background and tools we need from the representation theory of the symmetric group. In Section \ref{sec:reg-lemma}, we state and prove our weak regularity lemma for families of permutations. In Section \ref{sec:combinatorial-tools}, we develop the combinatorial and probabilistic tools we will need for the bootstrapping step described above. In Section \ref{sec:quasi}, we prove that combinatorial quasirandomness implies algebraic quasirandomness. In Section \ref{sec:proof}, we pull these ingredients together to prove Theorem \ref{thm:junta-approximation}. In Section \ref{sec:deduction}, we give the (fairly short) deduction of Theorem \ref{thm:extremal-forbidden} from Theorem \ref{thm:junta-approximation}, and we also deduce a new stability result from Theorem \ref{thm:junta-approximation}. Finally, in Section \ref{sec:conc}, we conclude with some open problems.

\section{Background and tools from the representation theory of the symmetric group}
\label{sec:background}

Our treatment of the representation theory of $S_n$ follows James and Kerber \cite{james-kerber}; the reader is referred to this book for more background.

We equip $\mathbb{R}[S_n]$ with the inner product $\langle\ ,\ \rangle$ induced by the uniform measure on $S_n$: for $f,g:S_n\to \mathbb{R}$,
$$\langle f,g \rangle := \frac{1}{n!}\sum_{\sigma \in S_n} f(\sigma) g(\sigma).$$
We let $\|\cdot \|_2$ denote the corresponding Euclidean norm.

If $n \in \mathbb{N}$, an (integer) {\em partition} $\alpha$ of $n$ is a non-increasing sequence of positive integers with sum $n$, i.e.\ $\alpha = (\alpha_1,\ldots,\alpha_l)$ for some $l \in \mathbb{N}$, with $\alpha_i \in \mathbb{N}$ for all $i \in [l]$, $\alpha_1 \geq \ldots \geq \alpha_l$, and $\sum_{i=1}^{l} \alpha_i=n$. If $\alpha$ is a partition of $n$, we write $\alpha \vdash n$. We write $>$ for the lexicographic ordering on partitions, i.e.\ $\alpha > \beta$ if $\alpha_j > \beta_j$ where $j = \min\{i:\ \alpha_i \neq \beta_i\}$.

Recall that the equivalence classes of irreducible complex representations of $S_n$ are in a natural one-to-one correspondence with the partitions of $n$, with each partition $\alpha \vdash n$ corresponding to the (irreducible) {\em Specht module} $S^{\alpha}$. For each $\alpha \vdash n$, let $\chi_{\alpha}$ denote the character of $S^{\alpha}$; since $S^{\alpha}$ is also a real representation, $\chi_{\alpha}$ is real-valued. Let $\mathsf{f}^{\alpha}: = \chi_{\alpha}(\Id)$ denote the dimension of $S^{\alpha}$, and let $W_{\alpha}$ denote the sum of all copies of $S^{\alpha}$ in $\mathbb{R}^{S_n}$; then $\dim[W_{\alpha}] = (\mathsf{f}^{\alpha})^2$. We have the orthogonal decomposition
$$\mathbb{R}^{S_n} = \bigoplus_{\alpha \vdash n} W_{\alpha},$$
and the orthogonal projection $P_{\alpha}(f)$ of a function $f:S_n\to \mathbb{R}$ onto $W_{\alpha}$ is given by the formula
$$P_{\alpha}(f)(\sigma) = \frac{\mathsf{f}^{\alpha}}{n!} \sum_{\pi \in S_n} f(\pi) \chi_{\alpha}(\sigma \pi^{-1}) \quad \forall \sigma \in S_n.$$
Hence, for any $\alpha \vdash n$ and any $f:S_n\to \mathbb{R}$, we have
\begin{equation}\label{eq:inner-prod}\|P_{\alpha}(f)\|_2^2= \langle P_{\alpha}(f),f \rangle = \frac{\mathsf{f}^{\alpha}}{(n!)^2} \sum_{\sigma,\pi \in S_n} f(\sigma)f(\pi) \chi_{\alpha}(\sigma \pi^{-1}).\end{equation}

\begin{defn} If \(\lambda = (\lambda_1,\ldots,\lambda_l)\) is a partition of \(n\), the {\em Young diagram of shape \(\lambda\)} is the array of $n$ left-justified cells with \(\lambda_i\) cells in row \(i\), for each \(i \in [l]\).
\end{defn}
For example, the Young diagram of the partition \((3,2^{2})\) is:
  \[
\yng(3,2,2)
\]

\begin{defn}
If $\lambda$ is a partition of $n$, its {\em transpose} $\lambda'$ is the partition of $n$ whose Young diagram is the transpose of the Young diagram of $\lambda$, i.e.\ its Young diagram is obtained by interchanging the rows and columns of the Young diagram of $\lambda$.
\end{defn}

\begin{defn}
A {\em \(\lambda\)-tableau} is a Young diagram of shape \(\lambda\), each of whose cells contains a number between 1 and \(n\). If \(\mu = (\mu_1,\ldots,\mu_l)\) is a partition of \(n\), a Young tableau is said to have {\em content \(\mu\)} if it contains \(\mu_i\) \(i\)'s for each \(i \in [l]\).
\end{defn}

\begin{defn}
A Young tableau is said to be {\em standard} if it has content \((1,1,\ldots,1)\) and the numbers are strictly increasing down each row and along each column.
\end{defn}
\begin{defn}
A Young tableau is said to be {\em semistandard} if the numbers are non-decreasing along each row and strictly increasing down each column.
\end{defn}
The relevance of standard Young tableaux stems from the following.
\begin{thm}
\label{thm:dimension}
If \(\lambda\) is a partition of \(n\), then \(\mathsf{f}^\lambda\) is the number of standard \(\lambda\)-tableaux.
\end{thm}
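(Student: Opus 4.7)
The plan is to prove Theorem \ref{thm:dimension} via the standard polytabloid construction of the Specht module $S^\lambda$, as in James--Kerber. First, I would introduce the permutation module $M^\lambda$ with basis the set of $\lambda$-tabloids (row-equivalence classes of $\lambda$-tableaux), acted on by $S_n$ via permutation of entries. For each $\lambda$-tableau $t$, let $C_t$ denote its column-stabilizer in $S_n$, and define the associated \emph{polytabloid}
\[
e_t \;=\; \sum_{\pi \in C_t} \sgn(\pi)\, \{\pi t\} \;\in\; M^\lambda,
\]
where $\{s\}$ denotes the tabloid containing $s$. The Specht module is then $S^\lambda = \Span\{e_t : t \text{ is a } \lambda\text{-tableau}\}$, and it is known (a separate classical fact which I would quote from James--Kerber) to be an irreducible representation of $S_n$, so that $\mathsf{f}^\lambda = \dim S^\lambda$. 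With this setup, the task reduces to proving that $\{e_t : t \text{ standard}\}$ is a basis of $S^\lambda$.

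For the spanning step, I would invoke the Garnir relations: given two columns of $t$ and a subset of entries drawn from them whose total size exceeds the length of one column, a sum of signed coset representatives annihilates $e_t$. Iterating such relations furnishes a straightening algorithm which rewrites an arbitrary polytabloid as an integer linear combination of polytabloids $e_{t'}$ with $t'$ standard; termination follows because each straightening step replaces a non-standard polytabloid by ones whose associated tabloids are strictly larger in the dominance (or lexicographic) order on tabloids. This shows that the standard polytabloids span $S^\lambda$.

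For linear independence, I would equip the set of $\lambda$-tabloids with the dominance/lexicographic order and verify the key combinatorial lemma: if $t$ is a standard $\lambda$-tableau, then among all tabloids $\{\pi t\}$ with $\pi \in C_t$ appearing in $e_t$ (necessarily with coefficient $\pm 1$), the tabloid $\{t\}$ itself is strictly maximal. Moreover, as $t$ ranges over standard tableaux, these maximal tabloids $\{t\}$ are all distinct. Given any putative linear dependence $\sum_t c_t e_t = 0$ over standard tableaux $t$, picking the $t$ with $\{t\}$ maximal among those with $c_t\neq 0$ forces $\{t\}$ to appear with nonzero coefficient on the left-hand side, a contradiction.

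Combining the two steps gives $\dim S^\lambda = \#\{\text{standard }\lambda\text{-tableaux}\}$, which is precisely the asserted formula. The main technical obstacle is the linear-independence step: the dominance-order argument must be set up carefully, with the correct notion of ``leading tabloid'' of a polytabloid and a clean verification that column-permutations of a standard tableau only decrease this order. The spanning step via Garnir relations is technically routine but notationally heavy, and in writing this out I would probably relegate the explicit form of the Garnir element to a lemma.
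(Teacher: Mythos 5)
The paper states Theorem \ref{thm:dimension} without proof, as classical background from the representation theory of $S_n$, citing James--Kerber. Your outline is precisely the standard polytabloid argument from that reference (spanning via Garnir/straightening, independence via the dominance-order leading-tabloid lemma), and it is correct; the only point worth making fully explicit in a write-up is that for \emph{any} tableau $t$ the coefficient of $\{t\}$ in $e_t$ is exactly $1$, because a non-identity $\pi\in C_t$ with $\{\pi t\}=\{t\}$ would lie in $R_t\cap C_t=\{\mathrm{id}\}$, so the leading-term argument really does produce a nonzero coefficient.
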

It follows from Theorem \ref{thm:dimension} that $\mathsf{f}^{\lambda'} = \mathsf{f}^{\lambda}$ for all $\lambda \vdash n$.

\begin{defn}
  The {\em hook} of a node $(i,j)$ in the Young diagram of a partition $\lambda$
  is $H_{i,j} = \{ (i,j') : j' \geq j \} \cup \{ (i',j) : i' \geq i \}$.
  The {\em hook length} of $(i,j)$ is $h_{i,j} = |H_{i,j}|$.
\end{defn}

\begin{thm} \label{hook-formula}
  (The Hook Formula.) If \(\lambda \vdash n\) with hook lengths $h_1, \ldots, h_k$, then
  \begin{equation}
  \label{eq:hook}
    \mathsf{f}^{\lambda} = \frac{n!}{\prod_i h_i}.
  \end{equation}
\end{thm}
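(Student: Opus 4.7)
The plan is to establish the formula by induction on $n$, using Theorem \ref{thm:dimension} to reinterpret $\mathsf{f}^\lambda$ as the number of standard $\lambda$-tableaux, and then reducing to a combinatorial identity about hook lengths which admits an elegant probabilistic verification due to Greene, Nijenhuis and Wilf.

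First I would set up the induction. The base case $n=1$ is trivial. For the inductive step, note that in any standard $\lambda$-tableau the cell containing the entry $n$ must be a \emph{corner} of $\lambda$ (a cell whose removal leaves the Young diagram of a partition). Partitioning standard tableaux by the position of the entry $n$ yields the branching relation
\[
\mathsf{f}^\lambda \;=\; \sum_{c \text{ corner of } \lambda} \mathsf{f}^{\lambda \setminus c},
\]
so, writing $h^\mu$ for hook lengths computed in the shape $\mu$ and using that $h^\lambda_c=1$ for a corner $c$, the inductive hypothesis reduces the claim to the purely combinatorial identity
\[
n \;=\; \sum_{c \text{ corner of } \lambda}\; \prod_{(i,j)\in\lambda\setminus c}\frac{h^\lambda_{i,j}}{h^{\lambda\setminus c}_{i,j}}.
\]
The individual ratios equal $1$ unless $(i,j)$ lies in the row or column of $c$, in which case they are of the tidy form $h/(h-1)$, so each summand is a simple rational function of the hook lengths of $\lambda$ alone.

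The cleanest way to verify this identity is via the \emph{hook walk}. Pick a uniformly random cell of $\lambda$; at each step, from a cell $(i,j)$ jump to a uniformly random \emph{other} cell of the hook $H_{i,j}\setminus\{(i,j)\}$; stop upon reaching a corner. Conditioning on the set $A$ of rows and the set $B$ of columns ever visited during the walk, a direct calculation shows that the probability of terminating at a given corner $c=(a,b)$ with this row/column trace is $\tfrac{1}{n}\prod_{i\in A\setminus\{a\}}\tfrac{1}{h^\lambda_{i,b}-1}\prod_{j\in B\setminus\{b\}}\tfrac{1}{h^\lambda_{a,j}-1}$; summing over all compatible $(A,B)$ telescopes to
\[
p_c \;:=\; \frac{1}{n}\prod_{(i,j)\in\lambda\setminus c}\frac{h^\lambda_{i,j}}{h^{\lambda\setminus c}_{i,j}}.
\]
Since the walk must terminate at some corner, $\sum_c p_c = 1$, which is exactly the required identity.

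The main obstacle is the verification that the hook walk has this termination distribution: the conditional probability must be read off by tracking how many choices are available at each step, and then the telescoping of the sum over $(A,B)$ needs a small inductive lemma on the subdiagram determined by $c$. Since the hook length formula is classical, and since it enters this paper only as a black-box tool from the representation theory of $S_n$, in practice one would simply cite James--Kerber rather than reproduce any of these arguments.
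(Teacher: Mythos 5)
The paper does not prove the Hook Formula: it is stated in Section \ref{sec:background} as standard background from James--Kerber and used as a black box, so there is no paper argument to compare yours against. Your sketch is a correct outline of the Greene--Nijenhuis--Wilf probabilistic proof, and the pieces you quote do fit together as claimed. The branching relation $\mathsf{f}^\lambda = \sum_c \mathsf{f}^{\lambda\setminus c}$ (over corners $c$) combined with the inductive hypothesis does reduce the formula to the identity $n = \sum_c \prod_{(i,j)\in\lambda\setminus c} h^\lambda_{i,j}/h^{\lambda\setminus c}_{i,j}$, and since removing the corner $c=(a,b)$ only perturbs hook lengths in row $a$ and column $b$ (each dropping by exactly $1$), each summand is indeed $\prod_{i<a}\frac{h_{i,b}}{h_{i,b}-1}\prod_{j<b}\frac{h_{a,j}}{h_{a,j}-1}$. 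Your hook-walk verification is the right mechanism: the joint probability that a walk started at a uniform cell has row/column trace $(A,B)$ and terminates at $c$ is $\tfrac{1}{n}\prod_{i\in A\setminus\{a\}}\tfrac{1}{h_{i,b}-1}\prod_{j\in B\setminus\{b\}}\tfrac{1}{h_{a,j}-1}$, and summing over $A\ni a$, $B\ni b$ factors into the stated product because the row and column contributions are independent. One small wording slip: you call this a probability ``conditioned on'' the trace, but the displayed quantity (with the $1/n$ in front) is the joint, not the conditional, probability; the argument is unaffected.

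The only real gap is the one you flag yourself: the trace-probability formula is asserted as ``a direct calculation,'' whereas it genuinely requires a short induction on $|A|+|B|$ (conditioning on the first step of the walk and using the inductive hypothesis on the smaller subdiagram). That lemma is standard and you correctly note it, but as written your argument is a plan rather than a complete proof. For the purposes of this paper that is beside the point: the result is cited, not proved, so supplying any proof goes beyond what the text requires, and citing James--Kerber (or Greene--Nijenhuis--Wilf) is what the authors themselves do.
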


We say that two \(\lambda\)-tableaux of content $(1,1,\ldots,1)$ are {\em row-equivalent} if they contain the same set of numbers in each row. A row-equivalence-class of \(\lambda\)-tableaux with content $(1,1,\ldots,1)$ is called a {\em \(\lambda\)-tabloid}. If $\lambda=(\lambda_1,\ldots,\lambda_l)$, then a $\lambda$-tabloid is simply an ordered $l$-tuple $(S_1,\ldots,S_l)$ of sets where $S_i \in [n]^{(\lambda_i)}$ for all $i \in [l]$ and the sets $S_1,\ldots,S_l$ partition $[n]$. Let us write $s(\lambda)$ for the number of permutations in $S_n$ that stabilize a fixed $\lambda$-tabloid; then
$$s(\lambda) = \prod_{i=1}^{l} \lambda_i!.$$

Write $\t_{\lambda}$ for the set of all $\lambda$-tabloids. Consider the natural left action of \(S_n\) on the set $\t_{\lambda}$ of all \(\lambda\)-tabloids, i.e.\ if a $\lambda$-tabloid $T$ has $i$th row $R_i \in [n]^{(\lambda_i)}$ for each $i$, then $\sigma(T)$ has $i$th row $\sigma(R_i)$ for each $i$. Let \(M^{\lambda}\) denote the induced permutation representation. We write \(\xi_{\lambda}\) for the character of \(M^{\lambda}\); the \(\xi_{\lambda}\) are called the {\em permutation characters} of \(S_n\). Note that if $\sigma \in S_n$, then $\xi_{\lambda}(\sigma)$ is simply the number of $\lambda$-tabloids fixed by $\sigma$.

{\em Young's theorem} gives the decomposition of each permutation representation into irreducible representations of \(S_n\), in terms of the {\em Kostka numbers}.

\begin{defn}
Let \(\lambda\) and \(\mu\) be partitions of \(n\). The {\em Kostka number} \(K_{\lambda,\mu}\) is the number of semistandard \(\lambda\)-tableaux of content \(\mu\).
\end{defn}

\begin{thm}[Young's theorem]
\label{thm:young}
If \(\mu\) is a partition of \(n\), then
\[M^{\mu} \cong \bigoplus_{\substack{\lambda \vdash n\colon\\ \lambda \geq \mu\hphantom{\colon}}} K_{\lambda, \mu}
  S^{\lambda}.\]
\end{thm}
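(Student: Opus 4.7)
The plan is to use Frobenius reciprocity together with the polytabloid realisation of the Specht modules. First, one observes that $M^{\mu} \cong \mathrm{Ind}_{S_{\mu}}^{S_{n}}\mathbf{1}$, where $S_{\mu} := S_{\mu_{1}}\times \cdots \times S_{\mu_{l}}$ is the Young subgroup fixing a chosen $\mu$-tabloid row-wise; this is immediate because $S_{n}$ acts transitively on $\mathsf{T}_{\mu}$ with stabiliser $S_{\mu}$. By Maschke's theorem the multiplicity of $S^{\lambda}$ in $M^{\mu}$ equals $\dim_{\mathbb{R}}\mathrm{Hom}_{S_{n}}(S^{\lambda},M^{\mu})$, and by Frobenius reciprocity this in turn equals
$$\dim_{\mathbb{R}}\mathrm{Hom}_{S_{\mu}}(\mathrm{Res}^{S_{n}}_{S_{\mu}} S^{\lambda},\mathbf{1}) \;=\; \dim_{\mathbb{R}}(S^{\lambda})^{S_{\mu}}.$$
So the entire proof reduces to showing $\dim(S^{\lambda})^{S_{\mu}} = K_{\lambda,\mu}$.

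For this combinatorial identity, I would work inside the polytabloid model: $S^{\lambda}\subseteq M^{\lambda}$ is spanned by the polytabloids $e_{t}:=\sum_{\pi \in C_{t}}\sgn(\pi)\,\pi\cdot\{t\}$, where $t$ ranges over $\lambda$-tableaux of content $(1,\ldots,1)$, $C_{t}$ is the column stabiliser of $t$, and $\{t\}$ is the tabloid containing $t$; moreover the polytabloids on standard $t$'s form a basis. For the \emph{lower bound} $\dim(S^{\lambda})^{S_{\mu}}\geq K_{\lambda,\mu}$, I would attach to each semistandard $\lambda$-tableau $T$ of content $\mu$ the explicit vector $v_{T} := \sum_{t\mapsto T} e_{t}$, summed over all content-$(1,\ldots,1)$ fillings $t$ that collapse to $T$ when every entry $j \in [n]$ is replaced by the index of the row of the reference $\mu$-tabloid containing $j$. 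The vector $v_{T}$ is $S_{\mu}$-invariant because $S_{\mu}$ permutes the fillings $\{t:t\mapsto T\}$ among themselves, and linear independence of $\{v_{T}\}_{T}$ follows by a leading-term argument with respect to any total order on $\mu$-tabloids refining dominance. The matching \emph{upper bound} comes from the Garnir relation / straightening algorithm: any $S_{\mu}$-invariant element of $S^{\lambda}$, expanded in the standard polytabloid basis and then reduced modulo Garnir relations, is forced to be supported on the (at most $K_{\lambda,\mu}$) semistandard tableaux of content $\mu$.

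The vanishing restriction $\lambda \geq \mu$ in the lexicographic order is automatic from the vanishing of $K_{\lambda,\mu}$: in any semistandard $\lambda$-tableau of content $\mu$, strict increase down columns forces the entries $1,\ldots,k$ to lie in the top $k$ rows for every $k$, giving $\sum_{i\leq k}\lambda_{i} \geq \sum_{i\leq k}\mu_{i}$, i.e.\ dominance, which implies the lex inequality. The main obstacle in the argument is the upper bound, which rests on the Garnir relations and the straightening algorithm; these are combinatorially delicate but entirely classical, and in a paper of the present scope I would quote them from James--Kerber \cite{james-kerber} rather than reprove them. Everything else is routine bookkeeping with Frobenius reciprocity and the semistandard basis.
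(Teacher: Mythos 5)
Your proposal is a correct proof, but there is nothing in the paper to compare it against: Theorem \ref{thm:young} is stated as a classical fact and quoted from James--Kerber \cite{james-kerber} without proof, since it is standard background used only to derive the determinantal formula and the eigenvalue estimates in Section \ref{sec:background}. The argument you give -- identify $M^{\mu}$ with $\mathrm{Ind}_{S_{\mu}}^{S_n}\mathbf{1}$, apply Frobenius reciprocity to reduce to computing $\dim(S^{\lambda})^{S_{\mu}}$, build the $S_\mu$-invariant vectors $v_T$ from semistandard tableaux for the lower bound, and appeal to the straightening algorithm and Garnir relations for the matching upper bound -- is essentially the proof in James--Kerber (phrased there via the semistandard homomorphisms $\hat\theta_T$ spanning $\mathrm{Hom}_{S_n}(S^\lambda,M^\mu)$, which is the dual of your fixed-vector picture over $\mathbb{R}$). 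Your observation that $K_{\lambda,\mu}\neq 0$ forces $\lambda\geq\mu$ in dominance, hence lexicographically, is correct: strict increase down columns puts every entry $\leq k$ in the top $k$ rows. One small presentational point: the multiplicity count $\dim\mathrm{Hom}_{S_\mu}(\mathrm{Res}\,S^\lambda,\mathbf{1})=\dim(S^\lambda)^{S_\mu}$ silently uses that $S^\lambda$ is self-dual (equivalently, that one is computing the $\mathbf{1}$-isotypic component in characteristic zero); this is true over $\mathbb{R}$ as used in the paper, but worth saying if you want the argument to be self-contained.
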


It follows that for each partition \(\mu\) of \(n\), we have
\[\xi_{\mu} = \sum_{\substack{\lambda \vdash n\colon\\ \lambda \geq \mu\hphantom{\colon}}} K_{\lambda,\mu} \chi_{\lambda}.\]

On the other hand, we can express the irreducible characters in terms of the permutation characters using the {\em determinantal formula}: for any partition \(\alpha\) of \(n\),
\begin{equation}\label{eq:determinantalformula} \chi_{\alpha} = \sum_{\pi \in S_{n}} \sgn(\pi) \xi_{\alpha - \textrm{id}+\pi}.\end{equation}
Here, if \(\alpha = (\alpha_{1},\alpha_{2},\ldots,\alpha_{l})\), \(\alpha - \textrm{id}+\pi\) is defined to be the sequence
\[(\alpha_{1}-1+\pi(1),\alpha_{2}-2+\pi(2),\ldots,\alpha_{n}-n+\pi(n)),\]
where $\alpha_i:=0$ for all $i > l$. If this sequence has all its entries non-negative, we let \(\overline{\alpha-\textrm{id}+\pi}\) be the partition of \(n\) obtained by reordering its entries into non-increasing order and deleting the zero entries, and we define \(\xi_{\alpha - \textrm{id}+\pi} = \xi_{\overline{\alpha-\textrm{id}+\pi}}\). If the sequence has a negative entry, we define \(\xi_{\alpha - \textrm{id}+\pi} = 0\). Note that if \(\xi_{\beta}\) appears on the right-hand side of (\ref{eq:determinantalformula}), then \(\beta \geq \alpha\), so the determinantal formula expresses \(\chi_{\alpha}\) in terms of \(\{\xi_{\beta}: \ \beta \geq \alpha\}\). Observe that if $\alpha_1 \geq n-r$, then $\xi_{\alpha - \textrm{id}+\pi} \neq 0$ only if $\pi$ fixes $[n] \setminus [r+1]$ pointwise, so the sum on the right-hand side of (\ref{eq:determinantalformula}) has at most $(r+1)!$ non-zero terms.

If $G=(V,E)$ is a finite graph, its {\em adjacency matrix} is the 0-1 matrix with rows and columns indexed by $V$, and with $(u,v)$-entry equal to 1 if and only if $uv \in E(G)$, for each $u,v \in V(G)$. If $G$ is a $d$-regular graph, its {\em normalized adjacency matrix} is the matrix obtained by dividing each entry of the adjacency matrix by $d$, so that all row and column sums are equal to 1.

If $\Gamma$ is a finite group, and $S \subset \Gamma$ with $S^{-1}=S$ and $\Id \notin S$, we write $\Cay(\Gamma,S)$ for the (right) {\em Cayley graph of $\Gamma$ with respect to $S$}, which is defined to be the graph with vertex-set $\Gamma$ and edge-set $\{\{g,gs\}:\ g \in \Gamma,\ s\in S\}$. If $S$ is invariant under conjugation by elements of $\Gamma$, then we say that $\Cay(\Gamma,S)$ is a {\em normal} Cayley graph.

The following observation is well-known, going back essentially to Frobenius and Schur (see for example \cite{ds}).
\begin{lem}
Let $\Gamma$ be a finite group, and let $S \subset \Gamma$ be conjugation-invariant with $S^{-1}=S$ and $\Id \notin S$. Let  $\mathcal{R}$ be a complete set of inequivalent complex irreducible representations of $\Gamma$. Then the eigenvalues of the adjacency matrix of the normal Cayley graph $\Cay(\Gamma,S)$ are given by
\begin{equation}\label{eq:frob} \lambda_{\rho} = \frac{1}{\dim(\rho)} \sum_{s \in S} \chi_{\rho}(s)\quad (\rho \in \mathcal{R}),\end{equation}
where $\chi_{\rho}$ is the character of the representation $\rho$.
\end{lem}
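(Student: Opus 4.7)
The plan is to realize the adjacency matrix of $\Cay(\Gamma,S)$ as a group-algebra element acting in the (right) regular representation, and then to exploit the conjugation-invariance of $S$ via Schur's lemma to read off the eigenvalue on each irreducible constituent.

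First, I would identify $\mathbb{C}^{\Gamma}$ with the group algebra $\mathbb{C}[\Gamma]$ in the usual way, and observe that the adjacency matrix $A$ of $\Cay(\Gamma,S)$ acts by $(Af)(g)=\sum_{s\in S}f(gs)$. Thus $A$ is precisely the image of the element $Z:=\sum_{s\in S}s\in\mathbb{C}[\Gamma]$ under the right regular representation. By the standard decomposition of the regular representation into irreducibles, namely $\bigoplus_{\rho\in\mathcal{R}}\rho^{\oplus\dim(\rho)}$, the spectrum of $A$ (with multiplicities) is the disjoint union, over $\rho\in\mathcal{R}$, of $\dim(\rho)$ copies of the spectrum of $\rho(Z)=\sum_{s\in S}\rho(s)$.

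Next, I would use conjugation-invariance of $S$: for every $g\in\Gamma$, the map $s\mapsto gsg^{-1}$ permutes $S$, so $\rho(g)\rho(Z)\rho(g)^{-1}=\rho(Z)$. Hence $\rho(Z)$ commutes with every element of $\rho(\Gamma)$, and since $\rho$ is irreducible, Schur's lemma forces $\rho(Z)=\lambda_{\rho}\cdot\Id$ for some scalar $\lambda_{\rho}\in\mathbb{C}$. Taking traces gives
\[
\lambda_{\rho}\dim(\rho)=\Tr\bigl(\rho(Z)\bigr)=\sum_{s\in S}\chi_{\rho}(s),
\]
which rearranges to the claimed formula \eqref{eq:frob}. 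The symmetry condition $S^{-1}=S$ guarantees that $A$ is a genuine symmetric $0/1$ adjacency matrix (so its spectrum is real, consistent with $\chi_{\rho}$ being a class function of a real/symmetric element of $\mathbb{C}[\Gamma]$), and the condition $\Id\notin S$ ensures the diagonal of $A$ vanishes, although neither of these conditions is actually needed for the eigenvalue formula itself.

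There is no serious obstacle here; the argument is essentially a bookkeeping exercise around Schur's lemma. The only minor point to verify is the left-versus-right convention for the Cayley graph, but since $S$ is conjugation-invariant the left and right regular actions of $Z$ are intertwined by the inversion map $g\mapsto g^{-1}$ and yield the same spectrum, so either convention produces the same formula.
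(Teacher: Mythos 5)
Your argument is correct and is the standard proof of this classical fact (the paper does not prove it, but cites it as going back to Frobenius and Schur, with \cite{ds} as a reference). Realizing $A$ as $R(Z)$ for $Z=\sum_{s\in S}s$ in the right regular representation, decomposing $R\cong\bigoplus_\rho\rho^{\oplus\dim\rho}$, applying Schur's lemma using conjugation-invariance to get $\rho(Z)=\lambda_\rho\,\Id$, and taking traces is exactly the intended route, and your remarks on the roles of $S^{-1}=S$, $\Id\notin S$, and the left/right convention are all accurate.

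One small point worth making explicit, since the paper later uses it implicitly: this argument also shows that the eigenvalue $\lambda_\rho$ occurs with multiplicity $\dim(\rho)^2$ (there are $\dim\rho$ copies of $\rho$ in the regular representation, each contributing $\dim\rho$ eigenvalues equal to $\lambda_\rho$), so the eigenspace of $\lambda_\rho$ contains the isotypic component $W_\rho$. This is what makes the spectral expansion $\langle f_1,Af_2\rangle=\sum_\alpha\lambda_\alpha\langle P_\alpha(f_1),P_\alpha(f_2)\rangle$ in Lemma \ref{lem:fourier} valid, and it follows at once from your Schur-lemma step, so you may as well record it.
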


For $n \in \mathbb{N}$, let $\mathcal{D}_n$ denote the set of derangements of $[n]$ (the permutations in $S_n$ without fixed points), and let $d_n = |\mathcal{D}_n|$. It is a well-known and elementary consequence of the inclusion-exclusion formula that
$$d_n = \sum_{j=0}^{n} (-1)^j \frac{n!}{j!} =( 1/e+o(1))n!.$$
The Cayley graph $\Cay(S_n,\mathcal{D}_n)$ is called the {\em derangement graph} on $S_n$; it is a normal Cayley graph, so (\ref{eq:frob}) applies, and therefore the eigenvalues of its adjacency matrix are given by
\begin{equation} \label{eq:frob-der} \lambda_{\alpha} = \frac{1}{\mathsf{f}^{\alpha}} \sum_{\sigma \in \d_n} \chi_{\alpha}(\sigma)\quad (\alpha \vdash n).\end{equation}
We will use this fact in the next two lemmas.
\begin{lem}
\label{lem:eval-estimate}
Let $A$ denote the normalized adjacency matrix of $\Cay(S_n,\mathcal{D}_n)$ (normalized so that all row and column sums are equal to 1). For each $\alpha \vdash n$, let $\lambda_{\alpha}$ denote the corresponding eigenvalue of $A$. Let $\mathsf{f}^{\alpha} = \dim(S^{\alpha})$ for each $\alpha \vdash n$. Then
$$|\lambda_{\alpha}| = O(1/\mathsf{f}^{\alpha})\quad \forall \alpha \vdash n.$$
\end{lem}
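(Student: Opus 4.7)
My plan is to bound the character sum appearing in (\ref{eq:frob-der}) via Cauchy--Schwarz on $\mathbb{R}[S_n]$ with its uniform-measure inner product. Since the derangement graph is $d_n$-regular, normalizing the adjacency matrix divides (\ref{eq:frob-der}) by $d_n$, so I would first rewrite the eigenvalue as
$$\lambda_\alpha \;=\; \frac{1}{d_n\,\mathsf{f}^{\alpha}}\sum_{\sigma\in\d_n}\chi_{\alpha}(\sigma) \;=\; \frac{n!}{d_n\,\mathsf{f}^{\alpha}}\,\langle\chi_{\alpha},\mathbf{1}_{\d_n}\rangle,$$
where $\mathbf{1}_{\d_n}$ denotes the indicator function of the derangement set.

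The point of this reformulation is that the character sum is now an $L^2$ inner product, so I can apply Cauchy--Schwarz directly:
$$|\langle\chi_{\alpha},\mathbf{1}_{\d_n}\rangle| \;\leq\; \|\chi_{\alpha}\|_2 \cdot \|\mathbf{1}_{\d_n}\|_2.$$
By the orthonormality relations for irreducible characters under the paper's normalized inner product, $\|\chi_{\alpha}\|_2 = 1$, and by direct computation $\|\mathbf{1}_{\d_n}\|_2 = \sqrt{d_n/n!}$. Substituting gives
$$|\lambda_\alpha| \;\leq\; \frac{1}{\mathsf{f}^{\alpha}}\sqrt{\frac{n!}{d_n}}.$$
Since $d_n = (1/e + o(1))n!$, the factor $\sqrt{n!/d_n}$ is bounded by an absolute constant (tending to $\sqrt{e}$), so $|\lambda_\alpha| = O(1/\mathsf{f}^{\alpha})$ as required.

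There is essentially no substantive obstacle here: the only conceptual point is that one should exploit the $L^2$ structure rather than any pointwise control on $\chi_{\alpha}$. A naive pointwise estimate $|\chi_{\alpha}(\sigma)| \leq \mathsf{f}^{\alpha}$ would give only the trivial bound $|\lambda_\alpha| \leq 1$; Cauchy--Schwarz succeeds precisely because $\mathbf{1}_{\d_n}$ has mass a positive constant fraction of $n!$, so its $L^2$-norm is of order $1$ and cancels the $d_n$ in the denominator of $\lambda_\alpha$, while the orthonormality of irreducible characters provides the uniform bound $\|\chi_\alpha\|_2 = 1$ that pays for the factor of $1/\mathsf{f}^\alpha$.
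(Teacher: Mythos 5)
Your proof is correct and coincides with the paper's own argument step for step: both rewrite the normalized eigenvalue as $\frac{n!}{\mathsf{f}^{\alpha} d_n}\langle\chi_{\alpha},\mathbf{1}_{\d_n}\rangle$, apply Cauchy--Schwarz together with orthonormality of irreducible characters to obtain $|\lambda_\alpha| \leq \frac{1}{\mathsf{f}^\alpha}\sqrt{n!/d_n}$, and conclude via $d_n = (1/e+o(1))n!$. There is nothing to add.
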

\begin{proof}
The normalized version of (\ref{eq:frob-der}) is
$$\lambda_{\alpha} = \frac{1}{\mathsf{f}^{\alpha} d_n} \sum_{\sigma \in \d_n} \chi_{\alpha}(\sigma) = \frac{n!}{\mathsf{f}^{\alpha} d_n} \langle \chi_{\alpha},1_{\d_n}\rangle.$$
Applying the Cauchy-Schwarz inequality, and using the orthonormality of irreducible characters, we have
$$|\lambda_{\alpha}| \leq \frac{n!}{\mathsf{f}^\alpha d_n}\|\chi_{\alpha}\|_2 \|1_{\d_n}\|_2 = \frac{n!}{\mathsf{f}^{\alpha} d_n} \cdot 1 \cdot \sqrt{d_n/n!} = \frac{1}{\mathsf{f}^{\alpha}} \sqrt{n!/d_n}.$$
Since $d_n = (1/e+o(1))n!$, the lemma follows.
\end{proof}

\begin{remark}
One can also prove Lemma \ref{lem:eval-estimate} by using the fact that the sum of the squares of the eigenvalues of $A$ (repeated with their geometric multiplicities) is equal to the sum of the squares of the entries of $A$, which in turn is equal to $n!/d_n$.
\end{remark}

For $s,n \in \mathbb{N}$, let $\l_n(s)$ denote the set of all partitions of $n$ with largest part of size $s$. (When $n$ is understood, we will sometimes omit it, writing $\l_n(s)=\l(s)$. Similarly, let $\l_n({\geq}s)$ denote the set of partitions of $n$ with largest part of size at least $s$, and let $\l_n({<}s)$ denote the set of all partitions of $n$ with largest part of size less than $s$. Finally, let $\l_n^*({\geq}s)$ denote the set of all partitions of $n$ with largest part of size between $s$ and $n-1$, i.e.\ the set of all non-trivial partitions of $n$ with largest part of size at least $s$.

\begin{lem}
\label{lem:transpose}
Let $A$ denote the normalized adjacency matrix of $\Cay(S_n,\mathcal{D}_n)$, and for each $\alpha \vdash n$, let $\lambda_{\alpha}$ denote the corresponding eigenvalue of $A$. Let $r \in \mathbb{N}$. For each $\alpha \vdash n$ such that $\alpha' \in \l({\geq}n-r)$, we have
$$|\lambda_{\alpha}| = \frac{O_r(1)}{(n-1)!}.$$
\end{lem}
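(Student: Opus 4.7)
The plan is to exploit the fact that tensoring with the sign representation swaps Specht modules of conjugate partitions: since $S^{\alpha} \cong S^{\alpha'} \otimes \sgn$, we have $\chi_\alpha(\sigma) = \sgn(\sigma)\chi_{\alpha'}(\sigma)$ for all $\sigma \in S_n$. This lets me rewrite
\[
\lambda_\alpha \;=\; \frac{1}{\mathsf{f}^{\alpha'} d_n}\sum_{\sigma \in \d_n} \sgn(\sigma)\,\chi_{\alpha'}(\sigma),
\]
and, since $\alpha'_1 \geq n-r$ by hypothesis, apply the determinantal formula \eqref{eq:determinantalformula} to $\chi_{\alpha'}$. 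By the observation right after \eqref{eq:determinantalformula}, only $\pi$ fixing $[n]\setminus[r+1]$ contribute, giving at most $(r+1)! = O_r(1)$ nonzero terms, each a permutation character $\xi_\mu$ with $\mu_1 \geq n-r$ and $\sum_{i \geq 2}\mu_i \leq r$. So it suffices to bound $\sum_{\sigma \in \d_n}\sgn(\sigma)\,\xi_\mu(\sigma)$ for each such $\mu$.

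To evaluate this signed sum, I would use that $\xi_\mu(\sigma)$ counts the $\mu$-tabloids fixed by $\sigma$ and swap the order of summation. The stabilizer of a $\mu$-tabloid factors as $S_{\mu_1}\times S_{\mu_2}\times\cdots$, and its derangements are precisely tuples of derangements in the factors, so
\[
\sum_{\sigma \in \d_n}\sgn(\sigma)\,\xi_\mu(\sigma) \;=\; \frac{n!}{\prod_i \mu_i!}\prod_i \widetilde{d}_{\mu_i},
\]
where $\widetilde{d}_m := \sum_{\tau \in \d_m}\sgn(\tau)$. A quick exponential-generating-function computation (from $\sum_{m \geq 0} \widetilde{d}_m \frac{x^m}{m!} = (1+x)e^{-x}$) yields $\widetilde{d}_m = (-1)^{m-1}(m-1)$ for all $m \geq 1$; in particular $\widetilde{d}_1 = 0$, so only $\mu$ whose parts are all $\geq 2$ contribute.

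For such $\mu$ one has $|\widetilde{d}_{\mu_1}| \leq n$, $\prod_{i \geq 2}|\widetilde{d}_{\mu_i}| = O_r(1)$, and $n!/\mu_1! \leq n^{n-\mu_1} \leq n^r$, giving $|\sum_\sigma \sgn(\sigma)\xi_\mu(\sigma)| = O_r(n^{k+1})$ where $k := n-\alpha'_1 \leq r$; summing over the $O_r(1)$ determinantal terms yields $|\sum_{\sigma \in \d_n}\chi_\alpha(\sigma)| = O_r(n^{k+1})$. The final ingredient is a lower bound on $\mathsf{f}^\alpha = \mathsf{f}^{\alpha'}$: writing $\alpha' = (n-k,\beta)$ with $\beta \vdash k$, the hook length formula (Theorem \ref{hook-formula}) shows the hooks in positions $(1,k{+}1),\ldots,(1,n{-}k)$ of the first row contribute exactly $(n-2k)!$, the first $k$ first-row hooks contribute $\Theta_r(n^k)$, and the $\beta$-part hooks contribute $O_r(1)$, so $\mathsf{f}^{\alpha'} = \Theta_r(n^k)$ uniformly. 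Combining with $d_n = \Theta(n!)$ then gives
\[
|\lambda_\alpha| \;=\; O_r\!\left(\frac{n^{k+1}}{n^k \cdot n!}\right) \;=\; O_r\!\left(\frac{1}{(n-1)!}\right),
\]
as required. The only mildly non-routine step is verifying the formula $\widetilde{d}_m = (-1)^{m-1}(m-1)$; once that is in hand, everything else is bookkeeping with the hook length formula and the determinantal expansion.
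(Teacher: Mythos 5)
Your proposal is correct and follows essentially the same route as the paper: twist by $\sgn$ via $\chi_\alpha = \sgn\cdot\chi_{\alpha'}$, apply the determinantal formula (with the $(r+1)!$ bound on the number of nonzero terms), swap the order of summation to reduce to signed derangement sums over the rows of a tabloid, use $\widetilde{d}_m = \sum_{\tau\in\d_m}\sgn(\tau)=(-1)^{m-1}(m-1)$, and close with the hook-length estimate $\mathsf{f}^{\alpha'}=\Theta_r(n^{k})$. The only cosmetic difference is that you derive the signed-derangement formula from its exponential generating function $(1+x)e^{-x}$, whereas the paper simply cites it as well known.
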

\begin{proof}
Let $\alpha \vdash n$ such that $\alpha' \in \l({\geq}n-r)$. Recall that
$$S^{\alpha} \cong S^{\alpha'} \otimes \sgn,$$
where $\sgn$ denotes the sign representation. Hence,
$$\chi_{\alpha}(\sigma) = \chi_{\alpha'}(\sigma) \sgn(\sigma) \quad \forall \sigma \in S_n.$$
Recalling that $\mathsf{f}^{\alpha'} = \mathsf{f}^{\alpha}$, we have
$$\lambda_{\alpha} =\frac{1}{\mathsf{f}^{\alpha} d_n} \sum_{\sigma \in \d_n} \chi_{\alpha'}(\sigma) \sgn(\sigma) = \frac{1}{\mathsf{f}^{\alpha'} d_n} \sum_{\sigma \in \d_n} \chi_{\alpha'}(\sigma) \sgn(\sigma).$$
By reducing $r$ if necessary, we may assume that $\alpha' \in \l(n-r)$, i.e.\ that $(\alpha')_1=n-r$, so that $\mathsf{f}^{\alpha'} = \Theta_r(n^r)$, by the Hook Formula (\ref{eq:hook}). Let
$$\chi_{\alpha'} = \sum_{\beta \geq \alpha'} c_{\beta}\xi_{\beta}$$
be the expression of $\chi_{\alpha'}$ as a linear combination of the permutation characters, as guaranteed by the determintantal formula (\ref{eq:determinantalformula}). As remarked above, in this case, the sum in the determinantal formula has at most $(r+1)!$ non-zero summands, and therefore
$$\sum_{\beta \geq \alpha'} |c_{\beta}| \leq (r+1)! = O_r(1).$$
It therefore suffices to show that for each $\beta \in \l({\geq}n-r)$, we have
$$\frac{1}{\mathsf{f}^{\alpha'} d_n} \sum_{\sigma \in \d_n} \xi_{\beta}(\sigma) \sgn(\sigma) = O_r(1/(n-1)!).$$
Writing $\beta = (n-s,\beta_2,\ldots,\beta_l)$, where $s \leq r$, we have
\begin{align*} \sum_{\sigma \in \d_n} \xi_{\beta}(\sigma) \sgn(\sigma) & = \sum_{T \in \t_{\beta}} \sum_{\sigma \in \d_n} 1_{\{\sigma(T)=T\}} \sgn(\sigma)\\
& = |\t_{\beta}| \sum_{\sigma_2 \in \d_{\beta_2},\ldots, \sigma_l \in \d_{\beta_l}} \sgn(\sigma_2)\cdot \sgn(\sigma_3) \ldots \cdot \sgn(\sigma_l) \sum_{\sigma_1 \in \d_{n-s}} \sgn(\sigma_1)\\
& = O_r(1) (-1)^{n-s-1}(n-s-1) |\t_{\beta}|\\
& = O_r(n) |\t_{\beta}|,
\end{align*}
using the well-known fact that  $\sum_{\sigma \in \d_{m}} \sgn(\sigma) = (-1)^{m-1}(m-1)$ for all $m \in \mathbb{N}$. Hence,
\begin{align*}
\frac{1}{\mathsf{f}^{\alpha'} d_n} \sum_{\sigma \in \d_n} \xi_{\beta}(\sigma) \sgn(\sigma) & = \frac{|\t_{\beta}|}{\mathsf{f}^{\alpha'} d_n} O_r(n)\\
& = O_r(1/(n-1)!)
\end{align*}
as required, using the facts that $d_n = (1/e+o(1))n!$, $|\t_{\beta}| = O_s(n^s)$ and $\mathsf{f}^{\alpha'} = \Theta_r(n^{r})$.
\end{proof}

\begin{cor}
\label{cor:low-evals}
Let $A$ denote the normalized adjacency matrix of $\Cay(S_n,\mathcal{D}_n)$, and for each $\alpha \vdash n$, let $\lambda_{\alpha}$ denote the corresponding eigenvalue of $A$. For each $r \in \mathbb{N}$, we have
$$\max_{\alpha \in \mathcal{L}({<}n-r)} |\lambda_{\alpha}| = O_r(n^{-r-1}).$$
\end{cor}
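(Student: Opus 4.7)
The plan is to partition the index set $\mathcal{L}({<}n-r)$ into two pieces, based on the size of $\alpha'_1$, and to invoke Lemma~\ref{lem:transpose} on the first piece and Lemma~\ref{lem:eval-estimate} on the second.

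First, if $\alpha\in\mathcal{L}({<}n-r)$ satisfies $\alpha'_1\geq n-r$, then $\alpha'\in\mathcal{L}({\geq}n-r)$, so Lemma~\ref{lem:transpose} applies directly and yields $|\lambda_\alpha|=O_r(1/(n-1)!)$. For any fixed $r$, the quantity $1/(n-1)!$ decays faster than any fixed inverse polynomial in $n$, so in particular $|\lambda_\alpha|=O_r(n^{-r-1})$, as required.

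The remaining case is $\alpha_1,\alpha'_1\leq n-r-1$. Here I would apply Lemma~\ref{lem:eval-estimate}, which gives $|\lambda_\alpha|=O(1/\mathsf{f}^\alpha)$, so that the task reduces to the dimension lower bound $\mathsf{f}^\alpha=\Omega_r(n^{r+1})$. I would establish this bound using the Hook Formula (Theorem~\ref{hook-formula}). The key observation is that, since $h_{1,j}=\alpha_1-j+\alpha'_j$ strictly decreases in $j$, the hook lengths $h_{1,1}>h_{1,2}>\cdots>h_{1,\alpha_1}$ in the first row form a set of $\alpha_1$ distinct positive integers bounded above by $h_{1,1}=\alpha_1+\alpha'_1-1\leq 2(n-r-1)-1$, and symmetrically for the first column; together with a crude upper bound on the hooks at the cells $(i,j)$ with $i,j\geq 2$ (all of which are at most $h_{2,2}\leq h_{1,1}-2$), this yields an estimate of the form $\prod_{(i,j)\in\alpha}h_{i,j}=O_r(n!/n^{r+1})$. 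The tightest shapes are the hook $(n-r-1,1^{r+1})$, for which the Hook Formula gives $\mathsf{f}^\alpha=\binom{n-1}{r+1}=\Theta_r(n^{r+1})$, and the two-row partition $(n-r-1,r+1)$, for which $\mathsf{f}^\alpha=\binom{n}{r+1}-\binom{n}{r}=\Theta_r(n^{r+1})$; for partitions further from these extremes, the additional strict inequalities among the hook lengths force $\mathsf{f}^\alpha\gg n^{r+1}$.

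The main obstacle is the dimension lower bound $\mathsf{f}^\alpha=\Omega_r(n^{r+1})$, whose proof involves a somewhat delicate case analysis of the hook lengths, particularly when $\alpha$ has many rows of comparable length (so that $\alpha'_1$ is much larger than $r+2$). However, the hypothesis $\alpha_1,\alpha'_1\leq n-r-1$ provides ample slack, and the estimate follows after routine bookkeeping with the Hook Formula.
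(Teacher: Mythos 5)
Your proposal is correct and takes essentially the same route as the paper: split $\mathcal{L}({<}n-r)$ according to whether $\alpha'_1\geq n-r$ (use Lemma~\ref{lem:transpose}) or $\alpha_1,\alpha'_1\leq n-r-1$ (use Lemma~\ref{lem:eval-estimate} together with the dimension lower bound $\mathsf{f}^\alpha=\Omega_r(n^{r+1})$). The only difference is that you sketch a direct hook-length argument for the dimension bound, whereas the paper simply cites Lemma 20 of \cite{setwise}, which establishes the same bound via the Hook Formula and induction; both are standard and your identification of the near-extremal shapes $(n-r-1,1^{r+1})$ and $(n-r-1,r+1)$ is correct.
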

\begin{proof}
It is well-known, and straightforward to prove (using the Hook Formula and induction, see e.g.\ Lemma 20 in \cite{setwise}) that
$$\min\{\mathsf{f}^{\alpha}:\ \alpha \vdash n,\ \alpha,\alpha' \in \l({<} n-r)\} = \Omega_r(n^{r+1}).$$
Hence, using Lemma \ref{lem:eval-estimate}, we have
$$\max\{|\lambda_{\alpha}|:\ \alpha \vdash n,\ \alpha,\alpha' \in \l({<} n-r)\} = O_r(n^{-r-1}).$$
On the other hand, Lemma \ref{lem:transpose} implies that
$$\max\{|\lambda_{\alpha}|:\ \alpha \vdash n,\ \alpha' \in \l({\geq} n-r)\} = O_r(1/(n-1)!).$$
Combining these two maxima yields the corollary.
\end{proof}

\section{A weak regularity lemma for families of permutations}
\label{sec:reg-lemma} 

In this section, we state and prove our weak regularity lemma for families of permutations. We first define the (very weak) notion of psuedorandomness that appears in this regularity lemma, viz, $(s,\epsilon)$-uncaptureability, as discussed in the Introduction.
  
First for some new notation. If $S_{1},S_{2},T_1,T_{2}\subset [n]$, and $\pi_{1}\colon S_{1}\to T_{1},\ \pi_{2}:S_{2}\to T_{2}$
are bijections, we write $S_n\left(\pi_{1},\overline{\pi_{2}}\right)$ for the set of all permutations in $S_n$ that agree with $\pi_{1}$ on every point of $S_{1}$
and disagree with $\pi_{2}$ on every point of $S_{2}$. Similarly, if $\pi_i:S_i \to T_i$ are bijections for each $i \in [l]$ and $\sigma_i:S_i'\to T_i'$ are bijections for each $i \in [m]$, we write $S_n(\pi_{1},\ldots,\pi_l,\overline{\sigma_1},\ldots,\overline{\sigma_{m}})$ for the set of all permutations in $S_n$ that agree with $\pi_{i}$ on every point of $S_{i}$ for each $i \in [l]$, and disagree with $\sigma_i$ on every point of $S_i'$, for each $i \in [m]$.

If $\f \subset S_n$, we write $\f\left(\pi_{1},\overline{\pi_{2}}\right)$
for the set of all permutations in $\f$ that agree with $\pi_{1}$ on every point of $S_{1}$
and disagree with $\pi_{2}$ on every point of $S_{2}$. (We define $\f(\pi_{1},\ldots,\pi_l,\overline{\sigma_1},\ldots,\overline{\sigma_{m}})$ similarly.) We regard the family $\f(\pi_1,\overline{\pi_2})$ as a subset of $S_n(\pi_1,\overline{\pi_2})$ and we equip the latter with the uniform measure, so that
\[
\mu\left(\f\left(\pi_{1},\overline{\pi_{2}}\right)\right):=\frac{\left|\f\left(\pi_{1},\overline{\pi_{2}}\right)\right|}{\left|S_{n}\left(\pi_{1},\overline{\pi_{2}}\right)\right|},
\]
provided $S_{n}\left(\pi_{1},\overline{\pi_{2}}\right) \neq \emptyset$. Similarly,
$$\mu(\f(\pi_{1},\ldots,\pi_l,\overline{\sigma_1},\ldots,\overline{\sigma_{m}})) := \frac{|\f(\pi_{1},\ldots,\pi_l,\overline{\sigma_1},\ldots,\overline{\sigma_{m}})|}{|S_n(\pi_{1},\ldots,\pi_l,\overline{\sigma_1},\ldots,\overline{\sigma_{m}})|}.$$

If $f:S_n \to \mathbb{R}$, $S,T \subset [n]$ and $\pi:S \to T$ is a bijection, we write $f(\pi)$ for the restriction of $f$ to $S_n(\pi)$, and we equip the latter with the uniform measure, so that
$$\mathbb{E}[f(\pi)]: = \frac{1}{|S_n(\pi)|} \sum_{\sigma \in S_n(\pi)}f(\sigma).$$

\begin{defn}
A family $ \f\subset S_{n}$ of permutations is said to be \emph{$\left(s,\epsilon\right)$-captureable}
if there exist sets $S,T \subset [n]$ with $|S|=|T| \leq s$, and a bijection $\pi\colon S\to T$,
such that $\mu\left(\f\left(\overline{\pi}\right)\right)\le\epsilon$. Similarly, if $\pi_1:S_1\to T_1,\ \pi_2:S_2\to T_2$ are bijections with $S_1,S_2,T_1,T_2 \subset [n]$, we say that a family $\f\left(\pi_{1},\overline{\pi_{2}}\right) \subset S_n(\pi_1,\overline{\pi_2})$
is {\em $\left(s,\epsilon\right)$-captureable} if there exist sets $S \subset [n] \setminus \Domain(\pi_1)$ and $T \subset [n] \setminus \Range(\pi_1)$, with $|S|=|T| \leq s$, and a bijection $\pi\colon S\to T$,
such that $\mu\left(\f\left(\pi_{1},\overline{\pi_{2}},\overline{\pi}\right)\right)\le\epsilon.$
If a family is not $\left(s,\epsilon\right)$-captureable, then we say it is $\left(s,\epsilon\right)$-\emph{uncaptureable}. 
\end{defn}

It is easy to check, using a Chernoff bound, that a $p(n)$-random subfamily of $S_n$ (in which every permutation in $S_n$ is included independently, with probability $p(n)$) is $(n,1/3)$-uncaptureable with high probability, provided $p(n) (n-1)!/\log n \to \infty$ as $n \to \infty$. In a sense, therefore, we are justified in viewing uncaptureability as a notion of pseudorandomness. However, it is a very weak notion of pseudorandomness (and indeed, we will need to `bootstrap' it to a much stronger notion): the 2-junta $\{\sigma \in S_n:\ \sigma(1) \in \{1,2\}\}$ is $(n/2,1/5)$-uncaptureable if $n$ is sufficiently large, for example, despite being very far from random-like. (The family $\{\sigma \in S_n:\ \sigma \text{ has at least one fixed point in }[s]\}$, on the other hand, is $(s,0)$-captureable.)

Here, then, is our weak regularity lemma.
\begin{prop}[A weak regularity lemma for sets of permutations]
\label{prop:Regularity lemma} For each $r,s \in \mathbb{N} \cup \{0\}$, there exists $C = C(r,s) \in \mathbb{N}$ such that for any family $\f\subset S_{n}$, there exists a $C$-junta $\j = \langle \pi_1,\ldots,\pi_l\rangle \subset S_n$ 
such that
\begin{enumerate}
\item $|\Domain(\pi_i)| <r$ for each $i \in [l]$;
\item $\mu(\f\setminus \j)\le Cn^{-r}$;
\item for each $i \in [l]$, the family $\f\left(\pi_{i}\right)$ is $(s,n^{-r})$-uncaptureable.
\end{enumerate}
\end{prop}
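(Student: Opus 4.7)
My plan is to build the junta via a recursive decision-tree construction. I would grow a rooted tree whose nodes are labelled by bijections $\pi\colon S\to T$ with $S,T\subset[n]$, starting from the root labelled by the empty bijection. At each node $\pi$ with $|\Domain(\pi)|<r$, I would test whether $\f(\pi)$ is $(s,n^{-r})$-uncaptureable. If it is, I would declare $\pi$ a \emph{good leaf} and add it to the list $\pi_1,\dots,\pi_l$. Otherwise, the definition of captureability supplies a bijection $\tau_\pi\colon S'\to T'$ with $S'\subset[n]\setminus\Domain(\pi)$, $T'\subset[n]\setminus\Range(\pi)$, $|S'|\le s$, and $\mu(\f(\pi,\overline{\tau_\pi}))\le n^{-r}$. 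I would then create one child per $j\in S'$, labelled by the one-step extension $\pi\cup\{j\mapsto\tau_\pi(j)\}$. I stop the recursion whenever a node reaches $|\Domain(\pi)|=r$; such a node is called a \emph{deep leaf} and is \emph{not} added to the $\pi_i$-list.

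Because every extension grows the domain by exactly one and each internal node has at most $s$ children, the tree has depth at most $r$ and total size at most $\sum_{k=0}^{r} s^{k}=O_{r,s}(1)$. Conditions (1) and (3) of the proposition then hold by construction: the $\pi_i$ are the good leaves, each has domain of size $<r$, and each $\f(\pi_i)$ is $(s,n^{-r})$-uncaptureable.

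For condition (2), trace a permutation $\sigma\in\f$ down the tree by selecting, at each internal node $\pi$ on its path, a child $\pi\cup\{j\mapsto\tau_\pi(j)\}$ with $\sigma(j)=\tau_\pi(j)$. Then $\sigma\in\j=\langle\pi_1,\dots,\pi_l\rangle$ unless either (i) at some internal node $\pi$ the permutation $\sigma$ disagrees with $\tau_\pi$ everywhere on $\Domain(\tau_\pi)$ (so $\sigma\in\f(\pi,\overline{\tau_\pi})$ and the descent terminates prematurely), or (ii) $\sigma$ reaches a deep leaf. For each internal node $\pi$,
\[|\f(\pi,\overline{\tau_\pi})|\le n^{-r}\,|S_n(\pi,\overline{\tau_\pi})|\le n^{-r}\,(n-|\Domain(\pi)|)!\le n^{-r}\,n!,\]
so each internal node contributes at most $n^{-r}$ to $\mu(\f\setminus\j)$. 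For each deep leaf $\pi$ we have $|\f(\pi)|\le(n-r)!\le 2n!/n^{r}$ for $n$ large. Summing the $O_{r,s}(1)$ internal and deep contributions yields $\mu(\f\setminus\j)=O_{r,s}(n^{-r})$, and choosing $C=C(r,s)$ sufficiently large absorbs all implicit constants.

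The argument is conceptually straightforward, with no real obstacle; the one slightly delicate choice is to extend by a \emph{single} coordinate of $\tau_\pi$ at each step rather than by all of $\tau_\pi$ at once. Extending one coordinate at a time is what guarantees that every descent strictly increases $|\Domain(\pi)|$ by exactly $1$, and hence bounds the tree depth (and the domain size of every good leaf) by $r$; extending by the full $\tau_\pi$ could blow the domain size up by as much as $s$ in a single step and spoil condition (1). Everything else is routine bookkeeping.
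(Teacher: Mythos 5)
Your proposal is correct and is essentially the same argument the paper gives: both construct the same decision tree with single-coordinate branching (at most $s$ children per node), depth capped at $r$, good leaves becoming the $\pi_i$, and the error $\mu(\f\setminus\j)$ accounted for by summing an $n^{-r}$ contribution over the $O_{r,s}(1)$ internal nodes and deep (bad) leaves. The one-step-at-a-time extension, which you rightly flag as the key design choice, is precisely what the paper does as well.
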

\begin{proof}
We construct a set $J$ of bijections such that the statement of the lemma holds with $\j = \langle J \rangle$. We construct $J$ iteratively, along with a labelled, rooted tree $\t$. Start with $J = \emptyset$, and with $\t$ consisting of a single node (the root), labelled $v_{\emptyset}$. If $\f$ itself is $(s,n^{-r})$-uncaptureable, then stop, declare $v_{\emptyset}$ to be a good leaf, and take $\j = S_n$. Otherwise, $\f$ is $(s,n^{-r})$-captureable, so there exist sets $S,T \in \binom{[n]}{ \leq s}$ and a bijection $\pi: S\to T$ such that $\mu\left(\f\left(\overline{\pi}\right)\right)\le n^{-r}$. For each $i \in S$, add a new node to $\t$ which is a child of $v_{\emptyset}$, labelled $v_{\sigma_i}$, where $\sigma_i$ is the bijection from $\{i\}$ to $\{\pi(i)\}$ mapping $i$ to $\pi(i)$. 

Now at any stage, if $\t$ has at least one leaf that has not yet been declared good or bad, choose one such. Suppose it is labelled $v_{\sigma}$ for some bijection $\sigma:U \to W$. If $|U|=r$, then declare $v_{\sigma}$ to be a bad leaf of $\t$. If $|U| < r$ and $\f(\sigma)$ is $(s,n^{-r})$-uncapturable, then add $\sigma$ to $J$ and declare $v_{\sigma}$ to be a good leaf of $\t$. If $|U| < r$ and $\f(\sigma)$ is $(s,n^{-r})$-capturable, then there exist sets $S \in \binom{[n] \setminus \Domain(\sigma)}{ \leq s}$ and $T \in \binom{[n] \setminus \Range(\sigma)}{ \leq s}$, and a bijection $\pi = \pi_{\sigma}: S\to T$ such that $\mu\left(\f\left(\sigma,\overline{\pi}\right)\right)\le n^{-r}$. For each $i \in S$, add a new node to $\t$ which is a child of $v_{\sigma}$, labelled $v_{\sigma_i}$, where $\sigma_i$ is the bijection from $U \cup \{i\}$ to $W \cup \{\pi(i)\}$ which agrees with $\sigma$ on $U$ and maps $i$ to $\pi(i)$. 

This process terminates when all leaves of $\t$ have been declared good or bad. At this stage, let $\j = \langle J \rangle$. (Note that $J$ consists of all the bijections labelling good leaves.) By the definition of `good', $\f(\pi)$ is $(s,n^{-r})$-uncaptureable for every $\pi \in J$. Note that every leaf of $\t$ has depth at most $r$ (relative to the root), since the depth of a leaf $v_{\sigma}$ is simply the cardinality of the domain of the corresponding bijection $\sigma$. Note also that every node has at most $s$ children. Hence, the tree $\t$ has at most $s^r$ leaves, so it has at most $s^r$ good leaves, and therefore $|J| \leq s^r$. Observe that for any permutation $\tau \in \f \setminus \j$, either $\tau$ agrees with $\sigma$ for some bad leaf $v_{\sigma}$, or else $\tau$ disagrees (in at least one place) with every bijection labelling a leaf. In the former case, $\tau \in \langle \sigma \rangle$, and $\mu(\langle \sigma \rangle) = (n-r)!/n! = O_r(n^{-r})$. In the latter case, let $v_{\sigma'}$ be an internal node of minimal depth such that $\sigma'$ agrees everywhere with $\tau$. Then $\tau \in \f(\sigma',\overline{\pi})$, where $\pi = \pi_{\sigma'}$ is chosen as above, so that $\mu(\f(\sigma',\overline{\pi})) \leq n^{-r}$. Since there are at most $s^r$ possibilities for bad leaves $v_{\sigma}$ and at most $(s^r-1)/(s-1) \leq s^r$ possibilities for internal nodes $v_{\sigma'}$, the union bound implies that $\mu(\f \setminus \j) = O_{r,s}(n^{-r})$, as required.
\end{proof}

\section{Combinatorial and probabilistic tools for finding `highly regular' subfamilies of uncaptureable families}
\label{sec:combinatorial-tools}

In this section, we describe the combinatorial and probabilistic tools we will need to `bootstrap' our weak notion of psuedorandomness (uncaptureability) into a stronger notion of pseudrandomness.

First, we need some more notation. If $\pi_1:S_1\to T_1,\ \pi_2:S_2\to T_2$ are bijections, with $S_1,S_2,T_1,T_2 \subset [n]$, we write $\pi_1 \cap \pi_2$ for the restriction of $\pi_1$ (or equivalently, of $\pi_2$) to the set of elements of $S_1 \cap S_2$ at which $\pi_1$ and $\pi_2$ agree. Note that, if one uses the formal definition of a function as a set of ordered pairs, $\pi_1 \cap \pi_2$ is simply the set-theoretic intersection of the sets $\pi_1$ and $\pi_2$ of ordered pairs. Hence, it is no abuse of notation to write $|\pi_1 \cap \pi_2|$ for the size of the domain of $\pi_1 \cap \pi_2$, i.e.\ the number of elements of $S_1 \cap S_2$ at which $\pi_1$ and $\pi_2$ agree, and we use this useful convention in the sequel. If $\pi_1$ and $\pi_2$ do not conflict anywhere, i.e.\ there exists no $x \in S_1 \cap S_2$ such that $\pi_1(x) \neq \pi_2(x)$ and no $y \in T_1 \cap T_2$ such that $\pi_1^{-1}(y) \neq \pi_2^{-1}(y)$, then we write $\pi_1 \cup \pi_2$ for the bijection from $S_1 \cup S_2$ to $T_1 \cup T_2$ that agrees with $\pi_1$ on $S_1$ and with $\pi_2$ on $S_2$. (Note, again, that if one uses the formal definition of a function as a set of ordered pairs, $\pi_1 \cup \pi_2$ is simply the set-theoretic union of the sets $\pi_1$ and $\pi_2$, under the no-conflict hypothesis.)

We are now ready to define a combinatorial notion of pseudorandomness (which we term `quasiregularity') that is significantly stronger than uncaptureability.

\begin{defn}
Let $s \leq n$, let $\alpha \geq 1$ and let $\pi_1:S_1\to T_1,\ \pi_2:S_2\to T_2$ be bijections with $S_1,S_2,T_1,T_2 \subset [n]$. A family $\f\left(\pi_{1},\overline{\pi_{2}}\right) \subset S_{n}$
is {\em $\left(s,\alpha\right)$-quasiregular} if there exists no bijection $\pi:S \to T$
such that $S \subset [n] \setminus \Domain(\pi_1)$, $T \subset [n] \setminus \Range(\pi_1)$, $|S|=|T| =s$, and $\mu\left(\f\left(\pi_{1},\overline{\pi_{2}},\pi\right)\right)\ge\alpha\mu\left(\f\right)$. Similarly, a function $f:S_n(\pi_{1},\overline{\pi_{2}}) \to \mathbb{R}$ is {\em $\left(s,\alpha\right)$-quasiregular} if there exists no bijection $\pi:S \to T$ such that $S \subset [n] \setminus \Domain(\pi_1)$, $T \subset [n] \setminus \Range(\pi_1)$, $|S|=|T| = s$, and $\mathbb{E}[f(\pi)] \geq \alpha \mathbb{E}[f]$.
\end{defn}

Informally, a family of permutations in $S_n$ (or a function on $S_n$) is highly quasiregular if, by restricting the family (or, respectively, the function) to the set of all permutations that agree with a bijection with domain and range of bounded size, one cannot increase the measure of the family (or, respectively, the expectation of the function) by too much. It is easy to check, using a Chernoff bound, that a $p(n)$-random subfamily of $S_n$ is $(n-k(n),1+\epsilon(n))$-quasiregular with high probability, provided $p(n) \epsilon(n)^2 k(n)! /(n \log n) \to \infty$ as $n \to \infty$. On the other hand, any non-trivial $C$-junta in $S_n$ is $(1,n/C)$-quasiregular. The notion of quasiregularity therefore captures much more of what we mean by a `random' family, than uncaptureability does!

We will need the following two straightforward claims about quasiregular families. These say that if one restricts a quasiregular family to the set of permutations disagreeing everywhere with a bijection with small domain, the quasiregularity and measure of the family is not much affected.

\begin{claim}
\label{claim:quasi}
Let $b,n,r \in \mathbb{N}$, and let $\alpha \geq 1$ such that $\alpha \leq (n-b)/(4r)$. Let $\sigma$ and $\rho$ be bijections between subsets of $[n]$, with $|\Domain(\sigma)|+|\Domain(\rho)|\leq b$. Suppose that $\h(\sigma,\overline{\rho}) \subset S_n(\sigma,\overline{\rho})$ is $(1,\alpha)$-quasiregular, and let $\pi$ be a bijection between subsets of $[n]$, with domain of size $r$, such that $\overline{\pi}$ does not conflict with $\sigma$. Then $\h(\sigma,\overline{\rho},\overline{\pi})$ is $(1,2\alpha)$-quasiregular, and $\mu(\h(\sigma,\overline{\rho},\overline{\pi})) \geq \tfrac{1}{2}\mu(\h(\sigma,\overline{\rho}))$.
\end{claim}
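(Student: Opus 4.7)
The plan is to derive both conclusions from a single elementary derangement-style counting estimate together with the union bound. Writing $\pi=\{(p_1,q_1),\ldots,(p_r,q_r)\}$ and $b':=|\Domain(\sigma)|\le b$, an inclusion-exclusion against the bounded family of forbidden values encoded by $\overline{\rho}$ gives
\[
P_i:=\frac{|S_n(\sigma\cup\{(p_i,q_i)\},\overline{\rho})|}{|S_n(\sigma,\overline{\rho})|}=\frac{1+O_b(1/n)}{n-b'}\le\frac{1}{n-b}(1+o(1)),
\]
so $\sum_iP_i\le\tfrac14(1+o(1))$ and $\alpha\sum_iP_i\le\tfrac14(1+o(1))$ by the hypothesis $\alpha\le(n-b)/(4r)$.

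For the measure bound, I would use $(1,\alpha)$-quasiregularity of $\h(\sigma,\overline{\rho})$ inside the union bound:
\[
|\h(\sigma,\overline{\rho})\setminus\h(\sigma,\overline{\rho},\overline{\pi})|\le\sum_i|\h(\sigma\cup\{(p_i,q_i)\},\overline{\rho})|\le\alpha\,\mu(\h(\sigma,\overline{\rho}))\sum_i|S_n(\sigma\cup\{(p_i,q_i)\},\overline{\rho})|=\Bigl(\alpha\sum_iP_i\Bigr)|\h(\sigma,\overline{\rho})|.
\]
Hence $|\h(\sigma,\overline{\rho},\overline{\pi})|\ge(1-\alpha\sum_iP_i)|\h(\sigma,\overline{\rho})|\ge\tfrac34|\h(\sigma,\overline{\rho})|$, and dividing by $|S_n(\sigma,\overline{\rho},\overline{\pi})|\le|S_n(\sigma,\overline{\rho})|$ gives $\mu(\h(\sigma,\overline{\rho},\overline{\pi}))\ge\tfrac34\mu(\h(\sigma,\overline{\rho}))$---stronger than the claimed $\tfrac12$. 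The same union bound applied to $S_n$ itself (replacing $\h$ by the full group and $\alpha$ by $1$) also supplies the side fact $|S_n(\sigma,\overline{\rho},\overline{\pi})|\ge(1-\sum_iP_i)|S_n(\sigma,\overline{\rho})|$, which I use next.

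For the $(1,2\alpha)$-quasiregularity, fix a singleton $\pi'$ compatible with $\sigma$; the identity $\mu(\h(\sigma,\overline{\rho},\overline{\pi},\pi'))=\mu(\h(\sigma\cup\pi',\overline{\rho},\overline{\pi}))$ reduces the question to bounding the right-hand side. Bounding its numerator trivially by $|\h(\sigma\cup\pi',\overline{\rho})|$ and its denominator via the side fact with $\sigma\cup\pi'$ in place of $\sigma$ (i.e.\ $b$ replaced by $b+1$, giving the worst-case factor $\tfrac32$ for the reciprocal $(1-\sum_iP_i^{*})^{-1}$ when $n-b=4r$), I obtain
\[
\mu(\h(\sigma\cup\pi',\overline{\rho},\overline{\pi}))\le\tfrac32\,\mu(\h(\sigma\cup\pi',\overline{\rho}))\le\tfrac32\alpha\,\mu(\h(\sigma,\overline{\rho}))
\]
by $(1,\alpha)$-quasiregularity of $\h(\sigma,\overline{\rho})$. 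Combining with $\mu(\h(\sigma,\overline{\rho}))\le\tfrac43\mu(\h(\sigma,\overline{\rho},\overline{\pi}))$ (the reciprocal form of the measure bound just proved) yields $\mu(\h(\sigma,\overline{\rho},\overline{\pi},\pi'))\le\tfrac32\cdot\tfrac43\cdot\alpha\,\mu(\h(\sigma,\overline{\rho},\overline{\pi}))=2\alpha\,\mu(\h(\sigma,\overline{\rho},\overline{\pi}))$, exactly as claimed.

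The only real obstacle is bookkeeping: juggling the three interacting constraints $\sigma,\overline{\rho},\overline{\pi}$ while shuttling between counts and measures, and verifying that the two derangement factors $\tfrac43$ and $\tfrac32$ multiply to precisely $2$---which is exactly why the hypothesis has the constant $4$ in $\alpha\le(n-b)/(4r)$. No deeper idea is required; the entire content of the claim is the observation that a disagreement constraint $\overline{\pi}$ of bounded support perturbs every count in sight by at most a factor $1+O(r/n)$, negligible on the $\alpha$-scale under the hypothesis.
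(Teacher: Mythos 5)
Your proof is correct and follows essentially the same route as the paper's: both rest on the union-bound observation that adding the constraint $\overline{\pi}$ only perturbs counts in $S_n(\sigma,\overline{\rho})$ (respectively in $\h(\sigma,\overline{\rho})$, via $(1,\alpha)$-quasiregularity) by a factor $1\pm O(r/(n-b))$, and both combine the resulting lower bound on $\mu(\h(\sigma,\overline{\rho},\overline{\pi}))$ with that perturbation estimate to control $\mu(\h(\sigma,\overline{\rho},\overline{\pi},\pi'))$; the paper simply phrases the quasiregularity half as a contradiction rather than as your direct chain of inequalities. Two small clean-ups are worth making: replace the asymptotic estimate $P_i=\frac{1+O_b(1/n)}{n-b'}$ by the exact bound $P_i\le\frac{1}{n-b}$ (which is what the constant $4$ in $\alpha\le(n-b)/(4r)$ is calibrated against, and is needed since the claim is not asymptotic in $n$), and carry the strict inequality $\mu(\h(\sigma\cup\pi',\overline{\rho}))<\alpha\mu(\h(\sigma,\overline{\rho}))$ provided by the definition of quasiregularity through the chain, so that the conclusion reads $<2\alpha$ as required.
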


\begin{proof}
For brevity, let us write $\h' : = \h(\sigma,\overline{\rho})$. Since $\h'$ is $(1,\alpha)$-quasiregular, we have
$$\mu(\h') \leq \mu(\h'(\overline{\pi})) + \alpha r\mu(\h')/(n-b).$$
Rearranging, we have
\begin{equation}\label{eq:compare-measures-1}\mu(\h'(\overline{\pi})) \geq (1-\tfrac{\alpha r}{n-b}) \mu(\h') \geq \tfrac{3}{4}\mu(\h'),\end{equation}
proving the second statement of the claim. Now suppose for a contradiction that there exist $i \in [n] \setminus \Domain(\sigma)$ and $j \in [n]\setminus \Range(\sigma)$ such that $\mu(\h'(\overline{\pi},i \mapsto j)) \geq 2\alpha \mu(\h'(\overline{\pi}))$. Using (\ref{eq:compare-measures-1}), it follows that
$$\mu(\h'(\overline{\pi},i \mapsto j)) \geq 2\alpha\cdot \tfrac{3}{4} \mu(\h')= \tfrac{3}{2}\alpha\mu(\h').$$
On the other hand,
$$\mu(\h'(i \mapsto j)) \geq (1-\tfrac{r}{n-b})\mu(\h'(\overline{\pi},i \mapsto j)),$$
so
$$\mu(\h'(i \mapsto j)) \geq (1-\tfrac{r}{n-b})\tfrac{3}{2}\alpha\mu(\h') > \alpha \mu(\h'),$$
a contradiction.
\end{proof}

\begin{claim}
\label{claim:quasi-small-error}
If $b,n,r,s \in \mathbb{N}$, $1 \leq \alpha \leq 2$ and $\h(\sigma,\overline{\rho}) \subset S_n(\sigma,\overline{\rho})$ is $(s,\alpha)$-quasiregular, where $|\Domain(\sigma)|+|\Domain(\rho)|\leq b$, and $\pi$ is a bijection such that $\overline{\pi}$ does not conflict with $\sigma$ and has $|\Domain(\pi)|=r$, where $n \geq r^2+b$, then $\h(\sigma,\overline{\rho},\overline{\pi})$ is $(s,(1+\tfrac{4}{\sqrt{n-b}})\alpha)$-quasiregular, and $\mu(\h(\sigma,\overline{\rho},\overline{\pi})) \geq (1-\tfrac{2}{\sqrt{n-b}})\mu(\h(\sigma,\overline{\rho}))$.
\end{claim}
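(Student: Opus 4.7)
Writing $\h'=\h(\sigma,\overline{\rho})$ and $\delta=1/\sqrt{n-b}$, the hypothesis $n\ge r^2+b$ becomes $r\le 1/\delta$. The plan is to parallel the proof of Claim~\ref{claim:quasi}, but to track the error terms tightly enough to exploit the stronger hypothesis $\alpha\le 2$. The two conclusions (the measure bound and the quasiregularity bound) will be proved by separate union-bound arguments, the first dropping $\overline{\pi}$ onto $\h'$ and the second inflating $\h'(\overline{\pi},\pi^*)$ back up to $\h'(\pi^*)$.

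A preliminary observation is that $(s,\alpha)$-quasiregularity of $\h'$ passes down to smaller bijections: for any bijection $\tau$ with $|\tau|\le s$ and $\Domain(\tau)\cap\Domain(\sigma)=\emptyset$, averaging $\mu(\h'(\tau\cup\tau'))\le\alpha\mu(\h')$ over all size-$(s-|\tau|)$ extensions $\tau'$ gives $\mu(\h'(\tau))\le\alpha\mu(\h')$.

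For the measure bound (Step 1), apply this observation to each singleton $\{i\mapsto\pi(i)\}$ for $i\in\Domain(\pi)$, obtaining
\[
\frac{|\h'(i\mapsto\pi(i))|}{|\h'|}=\mu(\h'(i\mapsto\pi(i)))\cdot\frac{|S_n(\sigma,\overline{\rho},i\mapsto\pi(i))|}{|\h'|}\le\frac{\alpha}{n-b}.
\]
A union bound over the $r$ points of $\Domain(\pi)$, using $\alpha\le 2$ and $r\le 1/\delta$, yields $|\h'(\overline{\pi})|/|\h'|\ge 1-\alpha r/(n-b)\ge 1-2\delta$. Combining with the trivial $|S_n(\sigma,\overline{\rho},\overline{\pi})|\le|S_n(\sigma,\overline{\rho})|$ gives $\mu(\h'(\overline{\pi}))\ge(1-2\delta)\mu(\h')$.

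For the quasiregularity bound (Step 2), suppose for contradiction that some size-$s$ bijection $\pi^*$ (with $\Domain(\pi^*)$, $\Range(\pi^*)$ disjoint from $\sigma$'s, and compatible with $\overline{\pi}$) satisfies $\mu(\h'(\overline{\pi},\pi^*))>(1+4\delta)\alpha\mu(\h'(\overline{\pi}))$. The inclusion $\h'(\overline{\pi},\pi^*)\subseteq\h'(\pi^*)$ gives $\mu(\h'(\overline{\pi},\pi^*))\le\mu(\h'(\pi^*))/q^*$, where $q^*:=|S_n(\sigma,\overline{\rho},\overline{\pi},\pi^*)|/|S_n(\sigma,\overline{\rho},\pi^*)|$ satisfies $q^*\ge 1-r/(n-b-s)$ by a union bound analogous to Step 1. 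Combining with $\mu(\h'(\pi^*))\le\alpha\mu(\h')$ (from $(s,\alpha)$-quasiregularity) and Step 1,
\[
(1+4\delta)\alpha\mu(\h'(\overline{\pi}))<\frac{\alpha\mu(\h')}{q^*}\le\frac{\alpha\mu(\h'(\overline{\pi}))}{(1-2\delta)q^*},
\]
which rearranges to $(1+4\delta)(1-2\delta)q^*<1$, i.e.\ $(1+2\delta-8\delta^2)q^*<1$. In the regime where $r/(n-b-s)\le\delta$ (which holds whenever $s$ is not too close to $n-b$; in particular whenever the product $r\sqrt{n-b}$ is at most $n-b-s$), one has $q^*\ge 1-\delta$, and the elementary estimate
\[
(1+2\delta-8\delta^2)(1-\delta)=1+\delta-10\delta^2+O(\delta^3)>1
\]
for $\delta$ small enough gives the desired contradiction.

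The main obstacle is the delicate calibration of the constants $4$ and $2$. Two $\Theta(\delta)$-sized losses enter the argument: the measure loss $1-2\delta$ when passing from $\mu(\h')$ to $\mu(\h'(\overline{\pi}))$, and the $1/q^*$ loss when inflating $\mu(\h'(\overline{\pi},\pi^*))$ back to $\mu(\h'(\pi^*))$. The choice of $(1+4\delta)$ in the conclusion is exactly what is needed so that $(1+4\delta)(1-2\delta)q^*$ strictly exceeds $1$ once both losses are accounted for, leaving a little headroom absorbed into the lower-order $\delta^2$ terms.
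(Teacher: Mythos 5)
Your argument is correct and follows essentially the same route as the paper: the same union bound gives the measure estimate $\mu(\h'(\overline{\pi}))\ge(1-2\delta)\mu(\h')$, and the quasiregularity estimate is proved by contradiction by ``undoing'' the $\overline{\pi}$ restriction and comparing against the $(s,\alpha)$-quasiregularity of $\h'$. Two remarks on where you are more careful than the source.

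First, you explicitly record the observation that $(s,\alpha)$-quasiregularity passes down to bijections of size $\le s$ (via the mediant/averaging argument); the paper invokes this silently in the phrase ``as above'' when re-using the single-point union bound from Claim~\ref{claim:quasi}, whose hypothesis was $(1,\alpha)$-quasiregularity. Making this explicit is the right call.

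Second, and more substantively, you bound the ratio $q^*=|S_n(\sigma,\overline{\rho},\overline{\pi},\pi^*)|/|S_n(\sigma,\overline{\rho},\pi^*)|$ by $1-r/(n-b-s)$, whereas the paper writes $1-r/(n-b)$. Your estimate is the correct one: the ambient space $S_n(\sigma,\overline{\rho},\pi^*)$ has free domain of size $n-|\Domain(\sigma)|-s$, so the probability of agreeing with $\pi$ at a fixed free point is at most $1/(n-|\Domain(\sigma)|-s-|\Domain(\rho)|)\le 1/(n-b-s)$, not $1/(n-b)$. The paper's denominator is too optimistic when $s$ is comparable to $n-b$; strictly as stated, the hypothesis $n\ge r^2+b$ gives only $r\le\sqrt{n-b}$ and does not give $r/(n-b-s)\le 1/\sqrt{n-b}$. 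You are therefore right that a regime hypothesis on $s$ (for instance $r\sqrt{n-b}\le n-b-s$) is needed to close the calculation, and you flag this honestly. This is not an issue for the paper's actual uses of the claim, since $s$ there is $O_{r,t}(1)$ and in particular $o(\sqrt{n})$, but you have identified a genuine (if minor) gap in how the claim is stated and proved. The final numerical inequality $(1+4\delta)(1-2\delta)(1-\delta)>1$ for small $\delta$ matches the paper's $(1+\delta')(1-\delta'/4)(1-\delta'/2)>1$ with $\delta'=4\delta$, so the calibration of the constants $4$ and $2$ agrees with the source.

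Your proposal is thus a correct proof of the claim under the (implicit, application-relevant) assumption that $s$ is well below $n-b$; without such an assumption, the claim as literally stated is not established by either argument.
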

\begin{proof}
As above, let us write $\h' : = \h(\sigma,\overline{\rho})$. As above, we have
\begin{equation}\label{eq:compare-measures}\mu(\h') \leq \mu(\h'(\overline{\pi})) + \alpha r\mu(\h')/(n-b),\end{equation}
and therefore
$$\mu(\h'(\overline{\pi})) \geq (1-\tfrac{\alpha r}{n-b}) \mu(\h') \geq (1-\tfrac{2}{\sqrt{n-b}})\mu(\h'),$$
proving the second statement of the claim. Now suppose for a contradiction that there exists a bijection $\tau$ between sets of size $s$, such that $\mu(\h'(\overline{\pi},\tau)) > (1+\delta)\alpha \mu(\h'(\overline{\pi}))$, where $\delta: = 4/\sqrt{n-b}$. Using (\ref{eq:compare-measures}), it follows that
$$\mu(\h'(\overline{\pi},\tau)) > (1+\delta)(1-\tfrac{2}{\sqrt{n-b}})\alpha\mu(\h').$$
On the other hand,
$$\mu(\h'(\tau)) \geq (1-\tfrac{r}{n-b})\mu(\h'(\overline{\pi},\tau)) \geq (1-\tfrac{1}{\sqrt{n-b}})\mu(\h'(\overline{\pi},\tau)),$$
so
$$\mu(\h'(\tau)) \geq (1+\delta)(1-\tfrac{1}{\sqrt{n-b}})(1-\tfrac{2}{\sqrt{n-b}})\alpha\mu(\h') =  (1+\delta)(1-\tfrac{\delta}{4})(1-\tfrac{\delta}{2})\alpha\mu(\h')> \alpha \mu(\h'),$$
a contradiction.
\end{proof}

The following lemma will enable us, when starting from a pair of polynomially dense, $(s,n^{-r})$-uncaptureable families, to `bootstrap' this weak notion of pseduorandomness ($(s,n^{-r})$-uncaptureability) into the stronger notion of pseudorandomness, viz., $(1,\Theta(\sqrt{n})$)-quasiregularity, at the cost of passing to a pair of (reasonably large) subfamilies. The proof is combinatorial, using a density increment argument.

\begin{lem}
\label{lem:uncap-quasi}
Let $b,n,r,s \in \mathbb{N}$ with
$$n \geq 8r\sqrt{n}+2r+b,\quad s \geq 2r-1,$$
and let $\f_1(\pi_1,\overline{\sigma_1}) \subset S_n(\pi_1,\overline{\sigma_1}),\ \f_2(\pi_2,\overline{\sigma_2}) \subset S_n(\pi_2,\overline{\sigma_2})$ be $(s,n^{-r})$-uncaptureable with $|\Domain(\pi_i)|+|\Domain(\sigma_i)| \leq b$ for $i=1,2$. Suppose that for all $x,y \in [n]$, whenever $\pi_1(x)=y$ with $x \notin \Domain(\pi_2)$ and $y \notin \Range(\pi_2)$, we have $\sigma_2(x)=y$, and whenever $\pi_2(x)=y$ with $x \notin \Domain(\pi_1)$ and $y \notin \Range(\pi_1)$, we have $\sigma_1(x)=y$. Then there exist bijections $\pi_3,\pi_4$ bijecting between sets of size less than $2r$, such that $\f_{1}\left(\pi_{1},\overline{\sigma_1},\pi_3,\overline{\pi_{4}}\right),\f_{2}\left(\pi_2,\overline{\sigma_2},\overline{\pi_{3}},\pi_{4}\right)$
are both $\left(1,2\sqrt{n}\right)$-quasiregular, with measures greater than $\tfrac{1}{2}n^{-r}$. Moreover, $\pi_1$ and $\pi_3$ have disjoint domains and disjoint ranges, $\pi_2$ and $\pi_4$ have disjoint domains and disjoint ranges, and if $\pi_1$ and $\pi_2$ agree in exactly $u$ places, then the same is true of the bijections $\pi_1 \cup \pi_3$ and $\pi_2 \cup \pi_4$.
\end{lem}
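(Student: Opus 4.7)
The plan is a density-increment iteration. We start with $\pi_3 = \pi_4 = \emptyset$, and at each step examine
$$\f_1' := \f_1(\pi_1, \overline{\sigma_1}, \pi_3, \overline{\pi_4}), \qquad \f_2' := \f_2(\pi_2, \overline{\sigma_2}, \overline{\pi_3}, \pi_4).$$
If both are $(1,2\sqrt{n})$-quasiregular we return; otherwise we select a density-increment witness $(x,y)$ of ratio $\geq 2\sqrt{n}$ for a non-quasiregular family and adjoin $(x\mapsto y)$ to $\pi_3$ (type (a), when the witness is for $\f_1'$) or to $\pi_4$ (type (b), when the witness is for $\f_2'$).

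The first step is to formulate and verify an inductive invariant that generalizes the lemma's hypothesis: at every stage, whenever $(x,y) \in \pi_2 \cup \pi_4$ with $x \notin \Domain(\pi_1 \cup \pi_3)$ and $y \notin \Range(\pi_1 \cup \pi_3)$, one has $(\sigma_1 \cup \pi_4)(x) = y$, and symmetrically with the roles of $1$ and $2$ swapped. A short case analysis verifies that this invariant is preserved by each update. The critical consequence is that any \emph{valid} witness $(x,y)$ for $\f_1'$'s non-quasiregularity must lie outside $\pi_2 \cup \pi_4$ --- otherwise the invariant would force $\f_1'(\ldots, x \mapsto y) = \emptyset$ and make the density ratio zero. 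This freshness of witnesses in turn guarantees (i) that $\pi_3, \pi_4$ remain bijections with the required disjointness of domains and ranges from $\pi_1, \pi_2$, and (ii) that no adjoined pair can create a new agreement between $\pi_1 \cup \pi_3$ and $\pi_2 \cup \pi_4$, so the number of agreements is preserved at $u$ throughout.

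For the quantitative bounds, uncaptureability applied to the empty bijection gives $\mu(\f_i^{(0)}) > n^{-r}$, and each type-(a) or type-(b) increment multiplies the relevant measure by $\geq 2\sqrt{n}$. A useful general observation is that $\mu(\f) > 1/(2\sqrt{n})$ already forces $(1,2\sqrt{n})$-quasiregularity, since a witness would demand $\mu(\f(x \mapsto y)) \geq 2\sqrt{n}\mu(\f) > 1$, which is impossible. Combined with the trivial upper bound $\mu \leq 1$, this yields $|\pi_3|, |\pi_4| < 2r$. For the measure lower bound, Claim~\ref{claim:quasi} (applied with $r=1$ and $\alpha = 2\sqrt{n}$) shows that each $\overline{(x,y)}$ restriction imposed on the non-incremented family decreases its measure by a factor of at least $1 - 2\sqrt{n}/(n-b) \geq 1 - 1/(4r)$ under the hypothesis $n \geq 8r\sqrt{n} + 2r + b$, provided that family is itself $(1,2\sqrt{n})$-quasiregular at that moment; compounded over at most $2r-1$ such restrictions this gives an aggregate factor of $(1 - 1/(4r))^{2r} > 1/2$, yielding final measures $> \tfrac{1}{2}n^{-r}$.

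The main obstacle I foresee is meeting Claim~\ref{claim:quasi}'s quasiregularity hypothesis on the non-incremented family at every step, which is delicate when both families become simultaneously non-$(1,2\sqrt{n})$-quasiregular (possibly with a shared high-density witness). I would resolve this by interleaving the update order: each time a type-(b) increment threatens to drop $\f_1'$ below $(1,2\sqrt{n})$-quasiregularity, we pause and perform a short sub-phase of type-(a) increments to restore it (and symmetrically). Since the total number of increments on each side is a priori capped by the measure argument above, this bookkeeping terminates, and the stated conclusions follow.
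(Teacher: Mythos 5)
Your density-increment plan with parameter $2\sqrt{n}$ and the termination argument via $\mu\leq 1$ are along the right lines, but there is a genuine gap in how you propose to control the measure of the \emph{non}-incremented family, and your proposed ``interleaving with sub-phases'' fix does not resolve it.

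The obstacle you correctly identify --- that applying Claim~\ref{claim:quasi} to bound the measure loss of $\f_2'$ under a $\overline{(x\mapsto y)}$ restriction requires $\f_2'$ to \emph{already} be $(1,2\sqrt{n})$-quasiregular at that moment --- is exactly the deadlock. If both families simultaneously fail quasiregularity, performing more type-(a) increments (as your ``restoration sub-phase'' suggests) further restricts $\f_2'$ by additional $\overline{(x\mapsto y)}$'s \emph{while} $\f_2'$ is not quasiregular, so its measure is uncontrolled and the bookkeeping does not obviously terminate. Moreover, even if termination were arranged, each application of Claim~\ref{claim:quasi} doubles the quasiregularity parameter ($(1,\alpha)\to(1,2\alpha)$); applying it once per increment over up to $2r$ increments would give $(1,2^{\Theta(r)}\sqrt{n})$-quasiregularity, not the stated $(1,2\sqrt{n})$.

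The paper avoids both problems with a different decomposition: it performs \emph{all} type-(a) increments first (building $\pi_3$), targeting the slightly stronger $(1,\sqrt{n})$-quasiregularity, and controls $\mu(\f_2(\pi_2,\overline{\sigma_2},\overline{\pi_3}))$ not via Claim~\ref{claim:quasi} but \emph{directly} via the $(s,n^{-r})$-uncaptureability of $\f_2(\pi_2,\overline{\sigma_2})$: since $|\Domain(\pi_3)|<2r\leq s+1$ and $\overline{\pi_3}$ does not conflict with $\pi_2$, uncaptureability gives $\mu(\f_2(\pi_2,\overline{\sigma_2},\overline{\pi_3}))>n^{-r}$ outright, with no quasiregularity requirement on $\f_2$ at any intermediate stage. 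Then all type-(b) increments are performed (building $\pi_4$), and \emph{finally} Claim~\ref{claim:quasi} is applied exactly once --- to $\f_1(\pi_1,\overline{\sigma_1},\pi_3)$, which is $(1,\sqrt{n})$-quasiregular by construction, with $\pi=\pi_4$ of domain size $<2r$ --- giving the $(1,2\sqrt{n})$-quasiregularity and the factor-$\tfrac12$ measure loss in one shot. This is precisely why the lemma requires $s\geq 2r-1$: it is what lets the uncaptureability hypothesis absorb the whole $\overline{\pi_3}$-restriction. Your proposal does not use this reserve of uncaptureability, which is where the missing idea lies.
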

\begin{proof}
Set $\alpha := \sqrt{n}$. Let $\f_1(\pi_1,\overline{\sigma_1})$ and $\f_2(\pi_2,\overline{\sigma_2})$ be $(s,n^{-r})$-uncaptureable. Then, in particular, we have $\min\{\mu(\f_1(\pi_1,\overline{\sigma_1})),\mu(\f_2(\pi_2,\overline{\sigma_2}))\} > n^{-r}$. Suppose that $\f_1(\pi_1,\overline{\sigma_1})$ and $\f_2(\pi_2,\overline{\sigma_2})$ are not both $(1,\alpha)$-quasiregular. We may assume that $\f_1(\pi_1,\overline{\sigma_1})$ is not $\left(1,\alpha\right)$-quasiregular. Then there exist $x \in [n]\setminus \Domain(\pi_1)$ and $y \in [n]\setminus \Range(\pi_1)$ such that $\mu(\f_1(\pi_1,\overline{\sigma_1},x \mapsto y)) \geq \alpha \mu(\f_1(\pi_1,\overline{\sigma_1})) > \alpha n^{-r}$. Note that $\pi_2$ does not map $x$ to $y$. (Indeed, we have $x \notin \Domain(\pi_1)$ and $y \notin \Range(\pi_1)$, so if $\pi_2(x)=y$, then by hypothesis we would have $\sigma_1(x)=y$, a contradiction.) Repeat this process (first with the bijection $\pi_1 \cup (x \mapsto y)$ in place of $\pi_1$, and so on) until we obtain a bijection $\pi_3$ such that $\f_1(\pi_1,\overline{\sigma_1},\pi_3)$ is $(1,\alpha)$-quasiregular (clearly, this happens after less than $2r$ steps, i.e.\ when the bijection $\pi_3$ has domain of cardinality less than $2r$, since
$$\mu(\f_1(\pi_1,\overline{\sigma_1},\pi_3)) > \alpha^{|\Domain(\pi_3)|} \mu(\f_1(\pi_1,\overline{\sigma_1})) > (\sqrt{n})^{|\Domain(\pi_3)|} n^{-r} \geq 1$$
if $|\Domain(\pi_3)| \geq 2r$). Hence, there exists a bijection $\pi_3$ with domain of cardinality less than $2r$, such that $\f_1(\pi_1,\overline{\sigma_1},\pi_3)$ is $(1,\alpha)$-quasiregular, $\mu(\f_1(\pi_1,\overline{\sigma_1},\pi_3))>n^{-r}$, and $\overline{\pi_3}$ does not conflict anywhere with $\pi_2$ (i.e., if $\pi_2(z)=w$ then $\pi_3(z) \neq w$). Provided $s \geq 2r-1$, since $\f_2(\pi_2,\overline{\sigma_2})$ is $(s,n^{-r})$-uncaptureable (and $\overline{\pi_3}$ does not conflict with $\pi_2$), we have $\mu(\f_2(\pi_2,\overline{\sigma_2},\overline{\pi_3})) > n^{-r}$. If $\f_2(\pi_2,\overline{\sigma_2},\overline{\pi_3})$ is $(1,\alpha)$-quasiregular, then we are done. If not, there exist $x \in [n] \setminus \Domain(\pi_2)$ and $y \in [n] \setminus \Range(\pi_2)$ such that $\mu(\f_2(\pi_2,\overline{\sigma_2},\overline{\pi_3},x \mapsto y)) \geq \alpha \mu(\f_2(\pi_2,\overline{\sigma_2},\overline{\pi_3})) > \alpha n^{-r}$. Note that $\pi_1$ does not map $x$ to $y$, by the same argument as above. Repeat this process until we obtain a bijection $\pi_4$ such that $\f_2(\pi_2,\overline{\sigma_2},\overline{\pi_3},\pi_4)$ is $(1,\alpha)$-quasiregular; by the same argument as above, we have $|\Domain(\pi_4)| < 2r$. Since $\f_1(\pi_1,\overline{\sigma_1},\pi_3)$ is $(1,\alpha)$-quasiregular (and $\overline{\pi_4}$ does not conflict with $\pi_1 \cup \pi_3)$, Claim \ref{claim:quasi} implies that $\f_1(\pi_1,\overline{\sigma_1},\pi_3,\overline{\pi_4})$ is $(1,2\alpha)$-quasiregular, and that
$$\mu(\f_1(\pi_1,\overline{\sigma_1},\pi_3,\overline{\pi_4})) \geq \tfrac{1}{2} \mu(\f_1(\pi_1,\overline{\sigma_1},\pi_3)) > \tfrac{1}{2}n^{-r}.$$
Since
$$\f_{1}\left(\pi_{1},\overline{\sigma_1},\pi_3,\overline{\pi_{4}}\right)\neq \emptyset,\quad \f_{2}\left(\pi_2,\overline{\sigma_2},\overline{\pi_{3}},\pi_{4}\right) \neq \emptyset,$$
we must have $\pi_3 \cap \pi_2 = \pi_3 \cap \pi_4 = \pi_4 \cap \pi_1 = \emptyset$, and therefore $|(\pi_1 \cup \pi_3) \cap (\pi_2 \cup \pi_4)| = |\pi_1 \cap \pi_2|$, as required. This completes the proof of the lemma.
\end{proof}

Next, we observe that somewhat quasiregular famlies are highly uncaptureable.

\begin{claim}
\label{claim:quasi-uncap}
Let $\beta >1$, let $b,N \in \mathbb{N}$ and let $\delta >0$. If $\h(\sigma,\overline{\rho}) \subset S_n(\sigma,\overline{\rho})$ is $(1,\beta)$-quasiregular, where $|\Domain(\sigma)|+|\Domain(\rho)|\leq b$, and $\mu(\h(\sigma,\overline{\rho})) > \delta$, then $\h(\sigma,\overline{\rho})$ is $(N,\tfrac{1}{2}\delta)$-uncaptureable, provided $N \leq (n-b)/(2\beta)$.
\end{claim}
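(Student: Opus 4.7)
The plan is a short union-bound argument, essentially re-running the opening estimate from the proof of Claim \ref{claim:quasi}. Fix an arbitrary bijection $\pi\colon S\to T$ with $S\subset[n]\setminus\Domain(\sigma)$, $T\subset[n]\setminus\Range(\sigma)$ and $|S|=|T|=r\leq N$. The goal is to prove $\mu(\h(\sigma,\overline{\rho},\overline{\pi}))>\tfrac{1}{2}\delta$; since $r$ is arbitrary (including $r=0$), this will give $(N,\tfrac{1}{2}\delta)$-uncaptureability.

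Writing $\pi=\{(x_1,y_1),\ldots,(x_r,y_r)\}$, I would first observe that any permutation in $\h(\sigma,\overline{\rho})$ that is not in $\h(\sigma,\overline{\rho},\overline{\pi})$ must agree with $\pi$ at some coordinate, and a union bound gives
\[|\h(\sigma,\overline{\rho})|-|\h(\sigma,\overline{\rho},\overline{\pi})|\;\leq\;\sum_{i=1}^{r}|\h(\sigma,\overline{\rho},x_i\mapsto y_i)|.\]
Dividing through by $|S_n(\sigma,\overline{\rho})|$, using $|S_n(\sigma,\overline{\rho},\overline{\pi})|\leq|S_n(\sigma,\overline{\rho})|$ on the left, and applying $(1,\beta)$-quasiregularity on the right together with the slice-ratio estimate $|S_n(\sigma,\overline{\rho},x\mapsto y)|/|S_n(\sigma,\overline{\rho})|\leq 1/(n-b)$, I would obtain
\[\mu(\h(\sigma,\overline{\rho}))-\mu(\h(\sigma,\overline{\rho},\overline{\pi}))\;\leq\;\frac{r\beta}{n-b}\,\mu(\h(\sigma,\overline{\rho})).\]
Invoking the hypothesis $r\leq N\leq(n-b)/(2\beta)$ then immediately yields $\mu(\h(\sigma,\overline{\rho},\overline{\pi}))\geq\tfrac{1}{2}\mu(\h(\sigma,\overline{\rho}))>\tfrac{1}{2}\delta$, as required.

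The only substep needing any auxiliary work is the elementary slice-ratio estimate: the numerator is at most $|S_n(\sigma,x\mapsto y)|=(n-|\Domain(\sigma)|-1)!$, while an inclusion--exclusion lower bound over the $|\Domain(\rho)|$ forbidden pairs from $\rho$ gives $|S_n(\sigma,\overline{\rho})|\geq(n-|\Domain(\sigma)|-|\Domain(\rho)|)(n-|\Domain(\sigma)|-1)!\geq(n-b)(n-|\Domain(\sigma)|-1)!$. There is no real obstacle here; the key quantitative step is the same one that appears at the start of the proof of Claim \ref{claim:quasi}, and directly quoting that claim would force the stricter hypothesis $N\leq(n-b)/(4\beta)$, which is why it is cleaner to redo the one-line computation rather than cite Claim \ref{claim:quasi} as a black box.
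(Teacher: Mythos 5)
Your proposal is correct and uses the same argument as the paper, just phrased directly rather than by contradiction: the paper assumes $(N,\delta/2)$-captureability, writes the very same inequality $\mu(\h') \leq \mu(\h'(\overline{\pi})) + \beta N\mu(\h')/(n-b)$, and rearranges to $\mu(\h')\leq\delta$, which is the contrapositive of your chain of estimates. Your remark that citing Claim \ref{claim:quasi} as a black box would cost a factor of $2$ in the constant is accurate, and the paper likewise avoids that loss by redoing the one-line computation inline.
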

\begin{proof}
Write $\h' = \h(\sigma,\overline{\rho})$. Suppose for a contradiction that $\h'$ is $(N,\tfrac{1}{2}\delta)$-captureable. Then there exist sets $S\subset [n]\setminus \Domain(\sigma)$ and $T\subset [n]\setminus \Range(\sigma)$ with $|S|=|T| \leq N$, and a bijection $\pi:S \to T$, such that $\mu(\h'(\overline{\pi})) \leq \tfrac{1}{2}\delta$. It follows that
$$\mu(\h') \leq \mu(\h'(\overline{\pi}))+\beta N \mu(\h')/(n-b) \leq \tfrac{1}{2} \delta + \beta N \mu(\h')/(n-b),$$
and therefore
$$\mu(\h') \leq \tfrac{1}{2} \delta / (1-\beta N/(n-b)) \leq \delta,$$
a contradiction.
\end{proof}

Our second `bootstrapping' lemma enables us to find a pair of dense, highly quasiregular families within a pair of dense, highly uncaptureable families. The proof is very similar to that of Lemma \ref{lem:uncap-quasi}.

\begin{lem}
\label{lem:uncap-very-quasi}
Let $c>0$, let $b,n,N \in \mathbb{N}$, let $0 < \epsilon <1$, let $r,s \in \mathbb{N}$, and let $\f_1(\pi_1,\overline{\sigma_1}) \subset S_n(\pi_1,\overline{\sigma_1})$, $\f_2(\pi_2,\overline{\sigma_2}) \subset S_n(\pi_2,\overline{\sigma_2})$ be $(N,cn^{-r})$-uncaptureable with $|\Domain(\pi_i)|+|\Domain(\sigma_i)| \leq b$ for $i=1,2$. Suppose that for all $x,y \in [n]$, whenever $\pi_1(x)=y$ with $x \notin \Domain(\pi_2)$ and $y \notin \Range(\pi_2)$, we have $\sigma_2(x)=y$, and whenever $\pi_2(x)=y$ with $x \notin \Domain(\pi_1)$ and $y \notin \Range(\pi_1)$, we have $\sigma_1(x)=y$. Suppose further that $N \geq (r \log n - \log c)s/\log(1+\epsilon) =:b'$, that $\epsilon \geq 8/\sqrt{n-b-b'}$ and that $n \geq (b')^2+b'+b$. Then there exist bijections $\pi_3,\pi_4$ bijecting between sets of size less than $b'$, such that
$$\f_{1}\left(\pi_{1},\overline{\sigma_1},\pi_3,\overline{\pi_{4}}\right),\f_{2}\left(\pi_2,\overline{\sigma_2},\overline{\pi_{3}},\pi_{4}\right)$$
are both $\left(s,1+2\epsilon\right)$-quasiregular, with measures greater than $\tfrac{1}{2}cn^{-r}$. Moreover, $\pi_1$ and $\pi_3$ have disjoint domains and disjoint ranges, $\pi_2$ and $\pi_4$ have disjoint domains and disjoint ranges, and if $\pi_1$ and $\pi_2$ agree in exactly $u$ places, then the same is true of the bijections $\pi_1 \cup \pi_3$ and $\pi_2 \cup \pi_4$.
\end{lem}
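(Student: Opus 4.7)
The strategy closely mirrors Lemma \ref{lem:uncap-quasi}, replacing the multiplicative density increment $\alpha = \sqrt{n}$ by the much smaller $1+\epsilon$; the stronger uncaptureability hypothesis $(N, cn^{-r})$ with $N \geq b'$ is exactly what is needed to absorb the correspondingly larger number of increment steps. I would construct $\pi_3$ first, by a density-increment procedure: starting from $\pi_3 := \emptyset$, while $\f_1(\pi_1, \overline{\sigma_1}, \pi_3)$ fails to be $(s, 1+\epsilon)$-quasiregular, the definition of quasiregularity supplies a bijection $\tau$ between sets of size $s$, with $\Domain(\tau) \subset [n] \setminus \Domain(\pi_1 \cup \pi_3)$ and $\Range(\tau) \subset [n] \setminus \Range(\pi_1 \cup \pi_3)$, satisfying $\mu(\f_1(\pi_1, \overline{\sigma_1}, \pi_3, \tau)) \geq (1+\epsilon) \mu(\f_1(\pi_1, \overline{\sigma_1}, \pi_3))$; I then replace $\pi_3$ by $\pi_3 \cup \tau$. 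Since the starting density exceeds $cn^{-r}$ (by $(N, cn^{-r})$-uncaptureability applied to the trivial bijection) and cannot exceed $1$, this iteration terminates after fewer than $(r\log n - \log c)/\log(1+\epsilon)$ steps; as each step adds $s$ elements to the domain, I obtain $|\Domain(\pi_3)| < b'$.

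A conflict check accompanies each iteration: for every $(x,y) \in \tau$ I need $\pi_2(x) \neq y$, so that $\overline{\pi_3}$ is compatible with $\pi_2$. The case $x \notin \Domain(\pi_2)$ or $y \notin \Range(\pi_2)$ is trivial, and the case $x \in \Domain(\pi_1)$ or $y \in \Range(\pi_1)$ is excluded by the disjointness built into the choice of $\tau$; the remaining case, $x \notin \Domain(\pi_1), y \notin \Range(\pi_1)$, is handled exactly as in Lemma \ref{lem:uncap-quasi}, since if $\pi_2(x) = y$ the hypothesis of the present lemma forces $\sigma_1(x) = y$, contradicting the nonemptiness of $\f_1(\pi_1, \overline{\sigma_1}, \pi_3, \tau)$. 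Once $\pi_3$ is built, the $(N, cn^{-r})$-uncaptureability of $\f_2(\pi_2, \overline{\sigma_2})$ together with $|\Domain(\pi_3)| \leq N$ gives $\mu(\f_2(\pi_2, \overline{\sigma_2}, \overline{\pi_3})) > cn^{-r}$, and I run the analogous iteration on $\f_2$ to produce $\pi_4$ with $|\Domain(\pi_4)| < b'$ and such that $\f_2(\pi_2, \overline{\sigma_2}, \overline{\pi_3}, \pi_4)$ is $(s, 1+\epsilon)$-quasiregular. The symmetric conflict argument, now invoking the hypothesis on $\pi_1$ and $\sigma_2$, gives $\pi_4 \cap \pi_1 = \emptyset$, while $\pi_4 \cap \pi_3 = \emptyset$ by construction; combined with $\pi_3 \cap \pi_2 = \emptyset$ this yields $|(\pi_1 \cup \pi_3) \cap (\pi_2 \cup \pi_4)| = |\pi_1 \cap \pi_2|$ and the disjoint-domain conclusions.

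It remains to upgrade the $(s, 1+\epsilon)$-quasiregularity of $\f_1(\pi_1, \overline{\sigma_1}, \pi_3)$ to $(s, 1+2\epsilon)$-quasiregularity of $\f_1(\pi_1, \overline{\sigma_1}, \pi_3, \overline{\pi_4})$ after imposing the negative constraint $\overline{\pi_4}$. This is precisely Claim \ref{claim:quasi-small-error}, applied with $\sigma = \pi_1 \cup \pi_3$, $\rho = \sigma_1$, $\pi = \pi_4$, $\alpha = 1+\epsilon$: the numerical hypothesis reduces to $n \geq (b')^2 + (b+b')$, matching the lemma's assumption, and the claim delivers $(s, (1+\epsilon)(1+4/\sqrt{n-b-b'}))$-quasiregularity with a multiplicative measure loss of at most $1 - 2/\sqrt{n-b-b'}$. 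The hypothesis $\epsilon \geq 8/\sqrt{n-b-b'}$ is tailored so that $(1+\epsilon)(1+4/\sqrt{n-b-b'}) \leq 1+2\epsilon$ (equivalently $4/\sqrt{n-b-b'} \leq \epsilon/(1+\epsilon)$, which holds since $\epsilon < 1$), and the measure lower bound $\tfrac{1}{2}cn^{-r}$ then follows because the multiplicative loss is bounded by $1/2$ once $\sqrt{n-b-b'} > 8$, which is already implied by $\epsilon < 1$ together with the standing bound on $\epsilon$. The main obstacle I anticipate is not the arithmetic, but rather the careful bookkeeping of the four mutual no-conflict conditions between $\pi_1, \pi_2, \pi_3, \pi_4$ and $\sigma_1, \sigma_2$; once these are organised using the hypotheses relating $\pi_i$ and $\sigma_j$, the rest of the proof is a routine density-increment iteration followed by a single application of Claim \ref{claim:quasi-small-error}.
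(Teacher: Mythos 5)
Your proposal is correct and follows essentially the same route as the paper: the same density-increment iteration for $\pi_3$ and $\pi_4$, the same invocation of the no-conflict hypothesis (you correctly deduce $\sigma_1(x)=y$ from the second clause and derive a contradiction with nonemptiness; the paper's own text has a small typo here, writing $\sigma_2$ where it means $\sigma_1$), and the same final appeal to Claim \ref{claim:quasi-small-error} with the same parameter bookkeeping. The arithmetic checks you perform match those implicit in the paper's proof.
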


\begin{proof}
Let $\f_1(\pi_1,\overline{\sigma_1})$ and $\f_2(\pi_2,\overline{\sigma_2})$ be $(N,cn^{-r})$-uncaptureable. Then, in particular, we have
$$\min\{\mu(\f_1(\pi_1,\overline{\sigma_1})),\mu(\f_2(\pi_2,\overline{\sigma_2}))\} > cn^{-r}.$$
Suppose that $\f_1(\pi_1,\overline{\sigma_1})$ and $\f_2(\pi_2,\overline{\sigma_2})$ are not both $(s,1+\epsilon)$-quasiregular. We may assume that $\f_1(\pi_1,\overline{\sigma_1})$ is not $(s,1+\epsilon)$-quasiregular. Then there exist sets $S \subset [n] \setminus \Domain(\pi_1)$ and $T \subset [n] \setminus \Range(\pi_1)$ with $|S|=|T|=s$, and a bijection $\pi:S\to T$, such that $\mu(\f_1(\pi_1,\overline{\sigma_1},\pi)) \geq (1+\epsilon)\mu(\f_1(\pi_1,\overline{\sigma_1})) > (1+\epsilon)cn^{-r}$. Note that $\pi$ does not agree anywhere with $\pi_2$. (Indeed, if $\pi(x)=y$, then $x \notin \Domain(\pi_1)$ and $y \notin \Range(\pi_1)$, so by hypothesis $\sigma_2(x)=y$, and therefore we cannot have $\pi_2(x)=y$.) Repeat this process (first with the bijection $\pi_1 \cup \pi$ in place of $\pi_1$, and so on) until we obtain a bijection $\pi_3$ such that $\f_1(\pi_1,\overline{\sigma_1},\pi_3)$ is $(s,1+\epsilon)$-quasiregular. This happens after less than
$$M:=\frac{r \log n - \log c}{\log(1+\epsilon)}$$
steps, i.e.\ when the domain of the bijection $\pi_3$ has cardinality less than $Ms=:b'$, since after $\lceil M \rceil$ steps, we would obtain a bijection $\pi'$ with
$$\mu(\f_1(\pi_1,\overline{\sigma_1},\pi')) \geq (1+\epsilon)^{M} \mu(\f_1(\pi_1,\overline{\sigma_1})) > (1+\epsilon)^{M} cn^{-r} = 1.$$
Hence, there exists a bijection $\pi_3$ with domain of cardinality less than $Ms$, such that $\f_1(\pi_1,\overline{\sigma_1},\pi_3)$ is $(s,1+\epsilon)$-quasiregular, $\mu(\f_1(\pi_1,\overline{\sigma_1},\pi_3))>cn^{-r}$, and $\overline{\pi_3}$ does not conflict with $\pi_2$. Provided $N \geq b'$, since $\f_2(\pi_2,\overline{\sigma_2})$ is $(N,cn^{-r})$-uncaptureable (and $\overline{\pi_3}$ does not conflict with $\pi_2$), we have $\mu(\f_2(\pi_2,\overline{\sigma_2},\overline{\pi_3})) > cn^{-r}$. If $\f_2(\pi_2,\overline{\sigma_2},\overline{\pi_3})$ is $(s,1+\epsilon)$-quasiregular, then we are done. If not, there exist sets $S \subset [n] \setminus \Domain(\pi_2)$ and $T \subset [n] \setminus \Range(\pi_2)$ with $|S|=|T|=s$, and a bijection $\pi:S\to T$, such that $\mu(\f_2(\pi_2,\overline{\sigma_2},\overline{\pi_3},\pi)) \geq (1+\epsilon) \mu(\f_2(\pi_2,\overline{\sigma_2},\overline{\pi_3})) > (1+\epsilon)cn^{-r}$. Note that $\pi$ does not agree anywhere with $\pi_1$, by the same argument as above. Repeat this process until we obtain a bijection $\pi_4$ such that $\f_2(\pi_2,\overline{\sigma_2},\overline{\pi_3},\pi_4)$ is $(s,1+\epsilon)$-quasiregular; by the same argument as above, we have $|\Domain(\pi_4)| < Ms$, where $M$ is as above. Observe that
$$|\Domain(\pi_1)|+|\Domain(\sigma_1)|+|\Domain(\pi_3)| \leq b+b'.$$
Since $\f_1(\pi_1,\overline{\sigma_1},\pi_3)$ is $(s,1+\epsilon)$-quasiregular (and $\overline{\pi_4}$ does not conflict with $\pi_1 \cup \pi_3)$, Claim \ref{claim:quasi-small-error} implies that $\f_1(\pi_1,\overline{\sigma_1},\pi_3,\overline{\pi_4})$ is $(s,1+2\epsilon)$-quasiregular, and that
$$\mu(\f_1(\pi_1,\overline{\sigma_1},\pi_3,\overline{\pi_4})) \geq \tfrac{1}{2} \mu(\f_1(\pi_1,\overline{\sigma_1},\pi_3) > \tfrac{1}{2}cn^{-r},$$
provided $\epsilon \geq 8/\sqrt{n-b-b'}$ and $n \geq (b')^2+b'+b$.

Since
$$\f_{1}\left(\pi_{1},\overline{\sigma_1},\pi_3,\overline{\pi_{4}}\right)\neq \emptyset,\quad \f_{2}\left(\pi_2,\overline{\sigma_2},\overline{\pi_{3}},\pi_{4}\right) \neq \emptyset,$$
we must have $\pi_3 \cap \pi_2 = \pi_3 \cap \pi_4 = \pi_4 \cap \pi_1 = \emptyset$, and therefore $|(\pi_1 \cup \pi_3) \cap (\pi_2 \cup \pi_4)| = |\pi_1 \cap \pi_2|$, as required. This completes the proof of the lemma.
\end{proof}

Given a pair of dense, highly quasiregular families $\h_1(\sigma_1,\overline{\rho_1}),\h_2(\sigma_2,\overline{\rho_2})$, the following lemma will enable us to increase the number of agreements between $\sigma_1$ and $\sigma_2$ (up to $t-1$), without seriously affecting the densities or the quasiregularity. (This, in turn, will enable us to reduce our task to finding, in a pair of dense, highly quasiregular families, a pair of permutations that disagree everywhere, hence reducing the algebraic part of the proof to the case where $t=1$, making the calculations significantly cleaner.) The proof is probabilistic, via a random sampling argument.

\begin{prop}
\label{prop:extra-agreements}
Let $t \leq s$, let $0 \leq \epsilon < \tfrac{3}{32}$, suppose $|\Domain(\sigma_i)|,|\Domain(\rho_i)| \leq b$ for $i=1,2$, and suppose that $\h_1(\sigma_1,\overline{\rho_1}) \subset S_n(\sigma_1,\overline{\rho_1})$ and $\h_2(\sigma_2,\overline{\rho_2}) \subset S_n(\sigma_2,\overline{\rho_2})$ are $(s,1+\epsilon)$-quasiregular. Provided $\epsilon \geq c_0bt/n$ for some absolute constant $c_0$, there exists a bijection $\pi$ between sets of size $t$, with domain disjoint from the domains of $\sigma_i$ and of $\rho_i$ and ranges disjoint from the ranges of $\sigma_i$ and of $\pi_i$ (for $i=1,2$), such that
\begin{align*}
\mu(\h_1(\sigma_1,\overline{\rho_1},\pi)) & \geq (1-4\epsilon)\mu(\h_1(\sigma_1,\overline{\rho_1})),\\
\mu(\h_2(\sigma_2,\overline{\rho_2},\pi)) & \geq (1-4\epsilon)\mu(\h_2(\sigma_2,\overline{\rho_2})),
\end{align*}
and $\h_i(\sigma_i,\overline{\rho_i},\pi)$ is $(s-t,1+8\epsilon)$-quasiregular for each $i\in\{1,2\}$.
\end{prop}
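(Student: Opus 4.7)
The plan is a probabilistic construction: I would sample $\pi:S\to T$ uniformly at random from the set $\Pi$ of bijections with $|S|=|T|=t$, $S$ disjoint from $\Domain(\sigma_i)\cup\Domain(\rho_i)$ for both $i\in\{1,2\}$, and $T$ disjoint from $\Range(\sigma_i)\cup\Range(\rho_i)$ for both $i$, and then show via a first-moment argument that with positive probability the chosen $\pi$ simultaneously satisfies all four conclusions.

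The first step is a double-counting computation showing that
\[
\mathbb{E}_\pi\bigl[\mu(\h_i(\sigma_i,\overline{\rho_i},\pi))\bigr] \;\geq\; \bigl(1-O(bt/n)\bigr)\,\mu(\h_i(\sigma_i,\overline{\rho_i})) \qquad (i=1,2).
\]
Indeed, for each $\sigma \in \h_i(\sigma_i,\overline{\rho_i})$, the number of $\pi\in\Pi$ with $\sigma\!\mid_{\Domain(\pi)} = \pi$ is the number of size-$t$ subsets $S \subset [n] \setminus (\Domain(\sigma_i)\cup\Domain(\rho_i))$ such that $\sigma(S)$ also avoids the ranges of the \emph{other} pair of bijections; these additional constraints forbid at most $O(b)$ further elements, so the count of admissible $S$ is $\binom{n-O(b)}{t}$ rather than $\binom{n-O(b)}{t}$ with a smaller excluded set, giving the claimed $1-O(bt/n)$ slack once one normalises the measure (the denominators $|S_n(\sigma_i,\overline{\rho_i},\pi)|$ are also uniform in $\pi$ up to a $1\pm O(b/n)$ factor, since $\pi$ is disjoint from $\rho_i$).

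Next, the hypothesis that $\h_i(\sigma_i,\overline{\rho_i})$ is $(s,1+\epsilon)$-quasiregular (combined with $t \leq s$) immediately yields the uniform upper bound $\mu(\h_i(\sigma_i,\overline{\rho_i},\pi)) \leq (1+\epsilon)\,\mu(\h_i(\sigma_i,\overline{\rho_i}))$ for every admissible $\pi$. Writing $X_i := \mu(\h_i(\sigma_i,\overline{\rho_i},\pi))$ and $M_i := \mu(\h_i(\sigma_i,\overline{\rho_i}))$, and splitting $\mathbb{E}[X_i]$ over the events $\{X_i < (1-4\epsilon)M_i\}$ and its complement gives
\[
\mathbb{E}[X_i] \leq (1-4\epsilon)M_i\,\Pr[X_i<(1-4\epsilon)M_i] + (1+\epsilon)M_i\,\Pr[X_i\geq(1-4\epsilon)M_i];
\]
combined with the lower bound $\mathbb{E}[X_i] \geq (1-O(bt/n))M_i$ from Step 1, this forces $\Pr[X_i \geq (1-4\epsilon)M_i] \geq 7/10$ provided $c_0$ is chosen large enough that $O(bt/n) \leq \epsilon/2$. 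A union bound over $i=1,2$ then produces a $\pi$ satisfying both required measure lower bounds.

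The inherited quasiregularity comes for free from the same quasiregularity hypothesis: for any admissible bijection $\tau$ between sets of size $s-t$, the bijection $\pi\cup\tau$ has size $s$, so
\[
\mu(\h_i(\sigma_i,\overline{\rho_i},\pi,\tau)) \leq (1+\epsilon)M_i \leq \frac{1+\epsilon}{1-4\epsilon}\,\mu(\h_i(\sigma_i,\overline{\rho_i},\pi)) \leq (1+8\epsilon)\,\mu(\h_i(\sigma_i,\overline{\rho_i},\pi)),
\]
where the final step uses the standing hypothesis $\epsilon < 3/32$, which is precisely the condition equivalent to $(1+\epsilon)(1-4\epsilon)^{-1} \leq 1+8\epsilon$. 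The only technical obstacle is the bookkeeping in the expectation computation — correctly accounting for which cells are forbidden for $S$ and for $T$ because of each of the four bijections $\sigma_1,\sigma_2,\rho_1,\rho_2$ — but all such losses are of order $1\pm O(bt/n)$ and are absorbed by the assumption $\epsilon \geq c_0 bt/n$ on taking $c_0$ to be a sufficiently large absolute constant.
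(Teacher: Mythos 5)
Your proposal is correct and follows essentially the same probabilistic route as the paper: an averaging argument exploiting the quasiregularity upper bound together with a near-unit lower bound on the expected restricted measure, a Markov-type step, a union bound over $i=1,2$, and the observation that $(1+\epsilon)/(1-4\epsilon)\le 1+8\epsilon$ iff $\epsilon<3/32$ for the inherited quasiregularity. The only (cosmetic) difference is that the paper fixes the domain $S_0$, randomizes only the range, and works with the distribution $\mathcal D_i$ proportional to $|S_n(\sigma_i,\overline{\rho_i},\pi)|$ — under which the expectation identity $\mathbb E_{\pi\sim\mathcal D_i}[\mu(\h_i(\sigma_i,\overline{\rho_i},\pi))]=\mu(\h_i(\sigma_i,\overline{\rho_i}))$ is exact and the $O(bt/n)$ slack enters only when passing to the uniform distribution on the good range-disjoint bijections, thereby sidestepping the double-counting bookkeeping that you gesture at.
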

\begin{proof}
Fix a set $S_0 \in [n]^{(t)}$ such that $S_0 \cap (\Domain(\sigma_1) \cup \Domain(\sigma_2) \cup \Domain(\rho_1) \cup \Domain(\rho_2)) = \emptyset$. Let $\mathcal{V}$ denote the set of all bijections $\pi$ such that $\Domain(\pi) = S_0$ and $\Range(\pi) \in [n]^{(t)}$. Let us say that a bijection in $\mathcal{V}$ is {\em good} if its range is disjoint from the ranges of $\sigma_i$ and of $\rho_i$ (for $i=1,2$); otherwise, let us say it is {\em bad}. Let $\mathcal{U}$ be the set of all good bijections in $\mathcal{V}$. Observe that if $i \in \{1,2\}$, and $\pi$ is chosen according to the probability distribution $\mathcal{D}_i$ defined by
$$\Pr_{\pi \sim \mathcal{D}_i}[\pi] = \frac{|S_n(\sigma_i,\overline{\rho_i},\pi)|}{|S_n(\sigma_i,\overline{\rho_i})|}\quad \forall \pi \in \mathcal{V};$$
then $\Pr_{\pi \sim \mathcal{D}_i}[\pi \in \mathcal{U}] = 1-O(bt/n)$, since for any $j \in \Range(\rho_i)$, we have $\Pr_{\pi \sim \mathcal{D}_i} [j \in \Range(\pi)] = O(t/n)$. Moreover, conditional on the event $\{\pi \in \mathcal{U}\}$, the conditional distribution of $\pi \sim \mathcal{D}_i$ is uniform on $\mathcal{U}$ (for $i=1,2$). We claim that
$$\Pr_{\pi \in \mathcal{U}} [\mu(\h_i(\sigma_i,\overline{\rho_i},\pi)) \leq (1-4\epsilon)\mu(\h_i(\sigma_i,\overline{\rho_i}))] < \tfrac{1}{4}$$
for $i=1,2$ (where the probability $\Pr_{\pi \in \mathcal{U}}$ denotes the probability with respect to the uniform distribution on $\mathcal{U}$). Indeed, suppose for a contradiction that
$$\Pr_{\pi \in \mathcal{U}} [\mu(\h_1(\sigma_1,\overline{\rho_1},\pi)) \leq (1-4\epsilon)\mu(\h_1(\sigma_1,\overline{\rho_1}))] \geq \tfrac{1}{4}.$$
Then
\begin{equation}\label{eq:large-prob} \Pr_{\pi \sim \mathcal{D}_1}[\mu(\h_1(\sigma_1,\overline{\rho_1},\pi)) \leq (1-4\epsilon)\mu(\h_1(\sigma_1,\overline{\rho_1}))] \geq (1-O(bt/n))\cdot \tfrac{1}{4},\end{equation}
since $\Pr_{\pi \sim \mathcal{D}_1}[\pi \in \mathcal{U}] = 1-O(bt/n)$, and conditional on the event $\{\pi \in \mathcal{U}\}$, the conditional distribution of $\pi \sim \mathcal{D}_1$ is uniform on $\mathcal{U}$. But by the quasiregularity hypothesis, for {\em any} $\pi \in \mathcal{V}$ we have
\begin{equation}\label{eq:uniform-bound} \mu(\h_1(\sigma_1,\overline{\rho_1},\pi)) \leq (1+\epsilon)\mu(\h_1(\sigma_1,\overline{\rho_1})).\end{equation}
Combining (\ref{eq:large-prob}) with (\ref{eq:uniform-bound}) yields
\begin{align*} \mu(\h_1(\sigma_1,\overline{\rho_1})) & = \mathbb{E}_{\pi \sim \mathcal{D}_1}[\mu(\h_1(\sigma_1,\overline{\rho_1},\pi))] \\
& \leq (1/4 - O(bt/n))(1-4\epsilon)\mu(\h_1(\sigma_1,\overline{\rho_1}))\\
& \quad \, + (3/4+O(bt/n))(1+\epsilon)\mu(\h_1(\sigma_1,\overline{\rho_1}))\\
& = (1-\epsilon/4+O(bt/n))\mu(\h_1(\sigma_1,\overline{\rho_1}))\\
& < \mu(\h_1(\sigma_1,\overline{\rho_1})),
\end{align*}
provided $\epsilon \geq c_0bt/n$ for some absolute constant $c_0$, a contradiction. It follows that
$$\Pr_{\pi \in \mathcal{U}} [\mu(\h_1(\sigma_1,\overline{\rho_1},\pi)) \leq (1-4\epsilon)\mu(\h_1(\sigma_1,\overline{\rho_1}))] < \tfrac{1}{4},$$
and similarly,
$$\Pr_{\pi \in \mathcal{U}} [\mu(\h_2(\sigma_2,\overline{\rho_2},\pi)) \leq (1-4\epsilon)\mu(\h_2(\sigma_2,\overline{\rho_2}))] < \tfrac{1}{4}.$$
Hence, by the union bound, we have
$$\Pr_{\pi \in \mathcal{U}} [\mu(\h_i(\sigma_i,\overline{\rho_i},\pi)) > (1-4\epsilon)\mu(\h_i(\sigma_i,\overline{\rho_i})) \ \forall i \in \{1,2\}] > \tfrac{1}{2}.$$
Hence, there exists a bijection $\pi$ such that the first two conditions of the Proposition hold. Fix such a bijection $\pi$. Suppose for a contradiction that $\h_1(\sigma_1,\overline{\rho_1},\pi))$ is not $(s-t,1+8\epsilon)$-quasiregular. Then there exists a bijection $\pi'$ between sets of size $s-t$, such that
\begin{align*} \mu(\h_1(\sigma_1,\overline{\rho_1},\pi,\pi')) & \geq (1+8\epsilon) \mu(\h_1(\sigma_1,\overline{\rho_1},\pi))\\
& \geq (1+8\epsilon)(1-4\epsilon)\mu(\h_1(\sigma_1,\overline{\rho_1}))\\
& > (1+\epsilon)\mu(\h_1(\sigma_1,\overline{\rho_1})),
\end{align*}
contradicting the quasiregularity of $\h_1(\sigma_1,\overline{\rho_1})$ (by considering the bijection $\pi \cup \pi'$). Hence, $\h_1(\sigma_1,\overline{\rho_1},\pi))$ is $(s-t,1+8\epsilon)$-quasiregular. The same argument shows that $\h_2(\sigma_2,\overline{\rho_2},\pi))$ is $(s-t,1+8\epsilon)$-quasiregular, completing the proof.
\end{proof}

\section{Combinatorial and algebraic quasirandomness}
\label{sec:quasi}
We first define a new combinatorial notion of pseudorandomness, one which is closely related to quasiregularity, and which will be more closely related than quasiregularity to our algebraic notion of pseudorandomness.
\begin{defn}
Let $f:S_n \to \mathbb{R}$. We say that $f$ is {\em $(r,\epsilon)$-quasirandom} if
\begin{equation}\label{eq:quasirandom} \mathbb{E}_{\pi} (\mathbb{E}[f(\pi)]-\mathbb{E}[f])^2 \leq \epsilon (\mathbb{E}[f])^2,\end{equation}
where the expectation on the left is over a uniform random bijection $\pi$ between $r$-element subsets of $[n]$.
\end{defn}

Note also that the left-hand side of (\ref{eq:quasirandom}) is precisely $\Var_{\pi}(\mathbb{E}[f(\pi)])$. Informally speaking, a function is highly quasirandom if, on average, restricting the function to the set of all permutations agreeing with a uniform random bijection with domain of fixed size, does not change the expectation of the function very much.

We remark that if $f:S_n \to \mathbb{R}$ is $(\epsilon,r)$-quasirandom, then it is $(\epsilon,s)$-quasirandom for all $s \leq r$, by the Cauchy-Schwarz inequality.

The following easy lemma says that highly quasiregular functions are also highly quasirandom.

\begin{lem}
\label{lem:quasireg-quasirandom}
If $0\leq \epsilon <1$ and $f:S_n \to [0,1]$ is $(s,1+\epsilon)$-quasiregular, then $f$ is $(s,2\epsilon+\epsilon^2)$-quasirandom.
\end{lem}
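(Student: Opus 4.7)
The plan is to unfold the variance $\mathbb{E}_\pi(\mathbb{E}[f(\pi)] - \mathbb{E}[f])^2$ and exploit the fact that $(s,1+\epsilon)$-quasiregularity gives a \emph{one-sided} pointwise upper bound on $\mathbb{E}[f(\pi)]$, which combined with nonnegativity of $f$ is enough. Write $m := \mathbb{E}[f]$, and for each bijection $\pi$ between $s$-element subsets of $[n]$ put $a_\pi := \mathbb{E}[f(\pi)]$, so that the statement to prove is $\mathbb{E}_\pi (a_\pi - m)^2 \leq (2\epsilon + \epsilon^2)m^2$.

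First, I would verify the tower-type identity $\mathbb{E}_\pi a_\pi = m$. This is a straightforward double-counting argument: the number of bijections $\pi$ between $s$-element subsets of $[n]$ that a given $\sigma \in S_n$ agrees with is $\binom{n}{s}$, the same for every $\sigma$, so swapping the order of summation shows that the uniform average over $\pi$ of the conditional mean $a_\pi$ equals the global mean $m$.

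Next, I would invoke the $(s,1+\epsilon)$-quasiregularity hypothesis to conclude $a_\pi \leq (1+\epsilon)m$ for every such $\pi$, and note that $a_\pi \geq 0$ since $f$ is $[0,1]$-valued. Squaring the first inequality gives the pointwise bound $a_\pi^2 \leq (1+\epsilon)^2 m^2$. Taking expectation over $\pi$ and subtracting $m^2$ (which, by the identity above, equals $(\mathbb{E}_\pi a_\pi)^2$) yields
\[
\mathbb{E}_\pi (a_\pi - m)^2 \;=\; \mathbb{E}_\pi a_\pi^2 - m^2 \;\leq\; (1+\epsilon)^2 m^2 - m^2 \;=\; (2\epsilon + \epsilon^2)m^2,
\]
which is exactly the desired $(s, 2\epsilon+\epsilon^2)$-quasirandomness bound. (A slight sharpening is available by multiplying $a_\pi \leq (1+\epsilon)m$ by the nonnegative quantity $a_\pi$ before taking expectations, giving $\mathbb{E}_\pi a_\pi^2 \leq (1+\epsilon)m\cdot \mathbb{E}_\pi a_\pi = (1+\epsilon)m^2$ and hence the tighter constant $\epsilon$; but the weaker constant $2\epsilon + \epsilon^2$ is all that is asserted.)

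There is essentially no obstacle here: the proof is a two-line computation once the tower identity $\mathbb{E}_\pi a_\pi = m$ is verified, and the only ingredient used beyond that identity is the pointwise inequality supplied directly by the definition of quasiregularity. The work in the section lies not in this lemma but in the earlier bootstrapping arguments that actually \emph{produce} quasiregular subfamilies; this lemma merely repackages quasiregularity into the variance-based form that the subsequent algebraic (representation-theoretic) arguments will consume.
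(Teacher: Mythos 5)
Your proof is correct and takes essentially the same approach as the paper: bound $\mathbb{E}_\pi(\mathbb{E}[f(\pi)])^2 \leq (1+\epsilon)^2(\mathbb{E}[f])^2$ via the pointwise quasiregularity bound, then subtract $(\mathbb{E}[f])^2$ to get the variance. The paper omits the explicit verification of the tower identity $\mathbb{E}_\pi \mathbb{E}[f(\pi)] = \mathbb{E}[f]$ (treating it as implicit when it remarks that the left-hand side of the quasirandomness condition is a variance), but this is the only difference.
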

\begin{proof}
Suppose $f:S_n \to \mathbb{R}$ is $(s,1+\epsilon)$-quasiregular. Then
$$\mathbb{E}_{\pi}(\mathbb{E}[f(\pi)])^2 \leq (1+\epsilon)^2 (\mathbb{E}[f])^2$$
and therefore
$$\Var_{\pi}(\mathbb{E}[f(\pi)]) \leq (2\epsilon+\epsilon^2)(\mathbb{E}[f])^2,$$
as required. 
\end{proof}

We now introduce our algebraic notion of pseudorandomness.

\begin{defn}
If $r \in \mathbb{N}$ and $\epsilon >0$, we say that $f:S_n \to \mathbb{R}$ is {\em $(r,\epsilon)$-algebraically quasirandom} if for all $\alpha \vdash n$ with $\alpha \neq (n)$ and $\alpha_1 \geq n-r$, we have 
$$\|P_{\alpha}(f)\|_2^2 \leq \epsilon \mathsf{f}^{\alpha} (\mathbb{E}[f])^2.$$
\end{defn}

The next lemma, which is key, says that highly quasirandom functions are highly algebraically-quasirandom. The proof is analytic/algebraic.

\begin{lem}
\label{lem:comb-alg}
Let $r \in \mathbb{N}$ and $\epsilon >0$. If $f:S_n \to \mathbb{R}$ is $(r,\epsilon)$-quasirandom, then it is $(r,O_r(\epsilon))$-algebraically quasirandom.
\end{lem}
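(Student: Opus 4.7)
My plan is to express $\mathbb{E}_\pi[(\mathbb{E}[f(\pi)])^2]$ as a bilinear form in $f$ whose kernel is a class function on $S_n$, and then to decompose this form spectrally so as to bound each $\|P_\alpha(f)\|_2^2$ directly. A short double-counting identity gives
\[
\mathbb{E}_\pi\!\left[(\mathbb{E}[f(\pi)])^2\right] \;=\; \mathbb{E}_{\sigma_1,\sigma_2 \in S_n}\!\left[g(\sigma_1 \sigma_2^{-1})\, f(\sigma_1)\, f(\sigma_2)\right],
\]
where $g(\tau) := r!\binom{\mathrm{fix}(\tau)}{r}$: for fixed $\sigma_1,\sigma_2$, the number of bijections $\pi$ between $r$-subsets with which both $\sigma_1$ and $\sigma_2$ agree equals $\binom{k}{r}$, with $k = \mathrm{fix}(\sigma_1 \sigma_2^{-1})$ being the number of positions at which $\sigma_1$ and $\sigma_2$ agree.

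Now, $g$ is precisely the permutation character $\xi_{(n-r,1^r)}$ of the natural $S_n$-action on ordered $r$-tuples of distinct elements. Young's theorem (Theorem~\ref{thm:young}) therefore yields $g = \sum_{\lambda:\,\lambda_1 \geq n-r} K_{\lambda,(n-r,1^r)}\,\chi_\lambda$. Since $g$ is a class function, the convolution operator $T_g$ sending $f$ to $\sigma \mapsto \mathbb{E}_\tau[g(\tau) f(\tau^{-1}\sigma)]$ is central in $\mathbb{R}[S_n]$ and therefore acts as a scalar on each isotypic component $W_\alpha$; evaluating $T_g \chi_\alpha$ at the identity pins down this scalar as $\mu_\alpha = K_{\alpha,(n-r,1^r)}/\mathsf{f}^\alpha$ when $\alpha_1 \geq n-r$, and $\mu_\alpha = 0$ otherwise. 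Decomposing $f = \sum_\alpha P_\alpha(f)$ orthogonally, I obtain
\[
\mathbb{E}_\pi\!\left[(\mathbb{E}[f(\pi)])^2\right] \;=\; \sum_{\alpha:\, \alpha_1 \geq n-r} \frac{K_{\alpha,(n-r,1^r)}}{\mathsf{f}^\alpha}\,\|P_\alpha(f)\|_2^2.
\]

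Isolating the term $\alpha=(n)$, which contributes precisely $(\mathbb{E}[f])^2$ (since $P_{(n)}(f)=\mathbb{E}[f]$ and $K_{(n),(n-r,1^r)}=\mathsf{f}^{(n)}=1$), the quasirandomness hypothesis becomes
\[
\sum_{\alpha \neq (n),\, \alpha_1 \geq n-r} \frac{K_{\alpha,(n-r,1^r)}}{\mathsf{f}^\alpha}\,\|P_\alpha(f)\|_2^2 \;\leq\; \epsilon(\mathbb{E}[f])^2,
\]
and since all summands are nonnegative, each single term is bounded by $\epsilon(\mathbb{E}[f])^2$. Rearranging gives $\|P_\alpha(f)\|_2^2 \leq \epsilon\, \mathsf{f}^\alpha (\mathbb{E}[f])^2 / K_{\alpha,(n-r,1^r)}$ for every relevant $\alpha$. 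The one remaining ingredient is the elementary lower bound $K_{\alpha,(n-r,1^r)} \geq 1$ for every $\alpha \neq (n)$ with $\alpha_1 \geq n-r$; this Kostka number equals the number of standard skew Young tableaux of the $r$-cell skew shape $\alpha/(n-r)$, and every nonempty skew shape admits at least one such filling. The principal obstacle is the clean identification of $g$ with the permutation character $\xi_{(n-r,1^r)}$ so that Young's theorem supplies the spectral expansion; once this is in place, the remaining steps are routine, and the constant hidden in $O_r(\epsilon)$ is actually $1$.
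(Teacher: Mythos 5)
Your proof is correct and takes a genuinely different route from the paper's. The paper's argument works one $\alpha$ at a time: it plugs the determinantal-formula expansion $\chi_\alpha = \sum_{\beta \geq \alpha} c_\beta \xi_\beta$ into the inner-product formula for $\|P_\alpha(f)\|_2^2$, and since the coefficients $c_\beta$ have mixed signs it must fall back on the crude bound $\sum_\beta |c_\beta| \leq (r+1)!$, which is exactly where the $O_r(1)$ constant comes from. You go in the opposite direction: you realize the quasirandomness statistic $\mathbb{E}_\pi\bigl[(\mathbb{E}[f(\pi)])^2\bigr]$ as the quadratic form of a central convolution operator whose kernel is the single permutation character $\xi_{(n-r,1^r)}$, and Young's theorem then diagonalizes this operator with the explicit nonnegative eigenvalues $K_{\alpha,(n-r,1^r)}/\mathsf{f}^\alpha$. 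Nonnegativity is the decisive gain: after subtracting the trivial $(n)$-term you obtain the exact identity
\[
\Var_\pi\bigl(\mathbb{E}[f(\pi)]\bigr) \;=\; \sum_{\substack{\alpha \vdash n,\ \alpha \neq (n) \\ \alpha_1 \geq n-r}} \frac{K_{\alpha,(n-r,1^r)}}{\mathsf{f}^\alpha}\,\|P_\alpha(f)\|_2^2,
\]
every summand of which is $\geq 0$, so each is individually controlled by the variance; and since $K_{\alpha,(n-r,1^r)} \geq 1$ precisely when $\alpha_1 \geq n-r$, you get the conclusion with constant $1$ in place of $(r+1)!$. Both proofs use only background already in Section \ref{sec:background} (the determinantal formula for the paper, Young's theorem for you), so neither imports new machinery; yours is the tighter and more structurally transparent of the two, and the displayed identity is a genuinely useful by-product that the paper's proof does not yield.
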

\begin{proof}
Let $f:S_n \to \mathbb{R}$ be $(\epsilon,r)$-quasirandom. Write $g: = f-\mathbb{E}[f]$. Let $\alpha \vdash n$ with $\alpha \neq (n)$ and $\alpha_1 \geq n-r$. Let
$$\chi_{\alpha} = \sum_{\beta \geq \alpha} c_{\beta}\xi_{\beta}$$
be the expression of $\chi_{\alpha}$ as a linear combination of the permutation characters, as guaranteed by the determintantal formula. As remarked above, in this case, the sum in the determinantal formula has at most $(r+1)!$ non-zero summands, and therefore
$$\sum_{\beta \geq \alpha} |c_{\beta}| \leq (r+1)! = O_r(1).$$
Using (\ref{eq:inner-prod}), we have
\begin{align*}
\|P_{\alpha}(f)\|_2^2 &= \|P_{\alpha}(g)\|_2^2\\
& = \frac{\mathsf{f}^{\alpha}}{(n!)^2} \sum_{\sigma,\pi \in S_n} g(\sigma)g(\pi) \chi_{\alpha}(\sigma \pi^{-1})\\
& = \frac{\mathsf{f}^{\alpha}}{(n!)^2} \sum_{\beta \geq \alpha} c_{\beta} \sum_{\sigma,\pi \in S_n} g(\sigma)g(\pi) \xi_{\beta}(\sigma \pi^{-1})\\
& =  \frac{\mathsf{f}^{\alpha}}{(n!)^2} \sum_{\beta \geq \alpha} c_{\beta} \sum_{\sigma,\pi \in S_n} g(\sigma)g(\pi) \sum_{T \in \t_{\beta}}1_{\{\sigma \pi^{-1}(T)=T\}}\\
& = \frac{\mathsf{f}^{\alpha}}{(n!)^2} \sum_{\beta \geq \alpha} c_{\beta} \sum_{T,T' \in \t_\beta} \sum_{\sigma,\pi \in S_n} g(\sigma) 1_{\{\sigma(T)=T'\}} g(\pi) 1_{\{\pi(T)=T'\}}\\
&= \frac{\mathsf{f}^{\alpha}}{(n!)^2} \sum_{\beta \geq \alpha} c_{\beta} \sum_{T,T' \in \t_\beta} \left(\sum_{\sigma \in S_n} g(\sigma) 1_{\{\sigma(T)=T'\}}\right)^2\\
& = \mathsf{f}^{\alpha} \sum_{\beta \geq \alpha} c_{\beta} \left(\frac{s(\beta)}{n!}\right)^2 \sum_{T,T' \in \t_\beta} \left(\frac{1}{s(\beta)}\sum_{\sigma \in S_n} g(\sigma) 1_{\{\sigma(T)=T'\}}\right)^2\\
& = \mathsf{f}^{\alpha} \sum_{\beta \geq \alpha} c_{\beta} \left(\frac{s(\beta)}{n!}\right)^2 \cdot |\t_\beta|^2 \cdot \frac{1}{|\t_\beta|^2} \sum_{T,T' \in \t_\beta} \left(\frac{1}{s(\beta)}\sum_{\sigma \in S_n} g(\sigma) 1_{\{\sigma(T)=T'\}}\right)^2\\
& \leq \mathsf{f}^{\alpha} \sum_{\beta \geq \alpha} c_{\beta} \left(\frac{s(\beta)}{n!}\right)^2 |\t_\beta|^2 \epsilon (\mathbb{E}[f])^2\\
& = \mathsf{f}^{\alpha}\epsilon (\mathbb{E}[f])^2 \sum_{\beta \geq \alpha} c_{\beta} \left(\frac{s(\beta)|\t_\beta| }{n!}\right)^2\\
& = \epsilon (\mathbb{E}[f])^2 \mathsf{f}^{\alpha} \sum_{\beta \geq \alpha} c_{\beta}\\
& \leq \epsilon (\mathbb{E}[f])^2 \mathsf{f}^{\alpha} \sum_{\beta \geq \alpha} |c_{\beta}|\\
& = O_r(1) \epsilon (\mathbb{E}[f])^2 \mathsf{f}^{\alpha},
\end{align*}
using the fact that $s(\beta)|\t_\beta| =n!$, by the orbit-stabilizer theorem. The first inequality above uses Cauchy-Schwarz, combined with the fact that
$$\frac{1}{s(\beta)}\sum_{\sigma \in S_n} g(\sigma) 1_{\{\sigma(T)=T'\}}$$
is simply an average of quantities of the form
$$\mathbb{E}[f(\pi)] - \mathbb{E}[f],$$
with the $\pi$'s corresponding to the possible choices of images under $\sigma$ of numbers in rows of $T$ below the first row.
\end{proof}

The following lemma encapsulates the spectral part of our argument; it says that two highly quasirandom fractional families must either put non-zero weights on a pair of permutations that disagree everywhere, or else the product of their measures must be small.

\begin{lem}
\label{lem:fourier}
For each $r \in \mathbb{N}$, there exists $\epsilon_0(r)>0$ such that the following holds. Let $f_1,f_2:S_n \to [0,1]$ such that each $f_i$ is $(r,\epsilon)$-quasirandom, where $\epsilon \leq \epsilon_0(r)$. Suppose that $f_1(\sigma_1)f_2(\sigma_2) = 0$ whenever $\sigma_1 \cap \sigma_2=\emptyset$. Then
$$\sqrt{\mathbb{E}[f_1] \mathbb{E}[f_2]} = O_r(n^{-r-1}).$$
\end{lem}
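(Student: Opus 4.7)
The plan is to convert the combinatorial `disjoint-support' hypothesis into a spectral identity, and then use Fourier decomposition on $S_n$ together with the eigenvalue estimates already derived for the derangement graph. Specifically, the hypothesis that $f_1(\sigma_1)f_2(\sigma_2)=0$ whenever $\sigma_1\cap\sigma_2=\emptyset$ is equivalent to $\langle A f_2, f_1\rangle = 0$, where $A$ is the normalized adjacency matrix of the derangement graph $\Cay(S_n,\mathcal{D}_n)$. Since this graph is a normal Cayley graph, $A$ acts on each isotypic component $W_\alpha$ as the scalar $\lambda_\alpha$, so I would first decompose $\langle A f_2, f_1\rangle$ along the orthogonal projections $P_\alpha$, isolate the trivial contribution $P_{(n)}(f_i) = \mathbb{E}[f_i]$ (corresponding to $\lambda_{(n)}=1$), and rearrange to obtain
\[
\mathbb{E}[f_1]\mathbb{E}[f_2] \;=\; -\sum_{\alpha \vdash n,\;\alpha \neq (n)} \lambda_\alpha\, \langle P_\alpha(f_1), P_\alpha(f_2)\rangle.
\]

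The key step is then to estimate this sum by splitting at the threshold $\alpha_1 = n-r$. For the `high' partitions $\alpha \neq (n)$ with $\alpha_1 \geq n-r$ (of which there are $O_r(1)$), I would combine Lemma \ref{lem:eval-estimate}, giving $|\lambda_\alpha| = O(1/\mathsf{f}^\alpha)$, with Lemma \ref{lem:comb-alg} applied to the $(r,\epsilon)$-quasirandom $f_i$, giving $\|P_\alpha(f_i)\|_2^2 \leq O_r(\epsilon)\,\mathsf{f}^\alpha (\mathbb{E}[f_i])^2$; Cauchy-Schwarz then produces a bound of $O_r(\epsilon)\,\mathbb{E}[f_1]\mathbb{E}[f_2]$ on each term, the $\mathsf{f}^\alpha$ factors cancelling exactly. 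For the `low' partitions $\alpha_1 < n-r$, I would instead use the uniform eigenvalue bound $|\lambda_\alpha| = O_r(n^{-r-1})$ supplied by Corollary \ref{cor:low-evals}, then invoke Cauchy-Schwarz across $\alpha$ together with Parseval and the pointwise inequality $\|f_i\|_2^2 \leq \mathbb{E}[f_i]$ (valid because $f_i \in [0,1]$) to bound this range by $O_r(n^{-r-1})\sqrt{\mathbb{E}[f_1]\mathbb{E}[f_2]}$.

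Combining the two bounds should yield
\[
\mathbb{E}[f_1]\mathbb{E}[f_2] \;\leq\; O_r(\epsilon)\,\mathbb{E}[f_1]\mathbb{E}[f_2] \;+\; O_r(n^{-r-1})\sqrt{\mathbb{E}[f_1]\mathbb{E}[f_2]},
\]
and choosing $\epsilon_0(r)$ small enough (so that the first $O_r(\epsilon)$ is at most $1/2$, say) lets me absorb the first summand into the left-hand side and divide through by $\sqrt{\mathbb{E}[f_1]\mathbb{E}[f_2]}$ to conclude. The real difficulty of the scheme lies in the boundary range $\alpha_1 \geq n-r$: the eigenvalues $|\lambda_\alpha|$ there are only of order $1/\mathsf{f}^\alpha$ and so individually too large to defeat the Parseval bound on $\langle P_\alpha(f_1),P_\alpha(f_2)\rangle$. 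Everything hinges on the quasirandomness hypothesis, transported to the algebraic side by Lemma \ref{lem:comb-alg}, delivering precisely the factor of $\mathsf{f}^\alpha$ needed to neutralise the $1/\mathsf{f}^\alpha$ coming from the eigenvalue estimate.
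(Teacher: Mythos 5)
Your proposal is correct and follows essentially the same route as the paper: starting from $\langle f_1, Af_2\rangle = 0$ for the derangement graph, decomposing along the isotypic components $W_\alpha$, splitting at the threshold $\alpha_1 = n-r$, applying Lemma \ref{lem:comb-alg} together with Lemma \ref{lem:eval-estimate} on the high partitions (exploiting exactly the cancellation of $\mathsf{f}^\alpha$ factors you highlight), using Corollary \ref{cor:low-evals} plus Cauchy--Schwarz/Parseval and $\|f_i\|_2^2 \leq \mathbb{E}[f_i]$ on the low partitions, and absorbing the $O_r(\epsilon)$ term by choosing $\epsilon_0(r)$ small. The paper's proof is a careful instantiation of precisely this scheme.
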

\begin{proof}
Let $f_1,f_2$ be as in the statement of the lemma, where $\epsilon_0(r)$ is to be chosen later. Let $A$ be the normalized adjacency matrix of the derangement graph on $S_n$, normalized so that all row and column sums are equal to 1. Then
\begin{align*} 0 & = \langle f_1,Af_2 \rangle\\
& = \sum_{\alpha \vdash n} \lambda_{\alpha} \langle P_{\alpha}(f_1),P_{\alpha}(f_2)\rangle\\
& = \mathbb{E}[f_1] \mathbb{E}[f_2] + \sum_{\alpha \in \mathcal{L}^*({\geq}n-r)} \lambda_{\alpha} \langle P_{\alpha}(f_1),P_{\alpha}(f_2)\rangle + \sum_{\alpha \in \mathcal{L}({<}n-r)} \lambda_{\alpha} \langle P_{\alpha}(f_1),P_{\alpha}(f_2)\rangle.
\end{align*}
By Corollary \ref{cor:low-evals}, we have
$$\max_{\alpha \in \mathcal{L}({<}n-r)} |\lambda_{\alpha}| = O_r(n^{-r-1}).$$
By Cauchy-Schwarz, we have
\begin{align*} \sum_{\alpha \vdash n} |\langle P_{\alpha}(f_1),P_{\alpha}(f_2)\rangle| & \leq \sum_{\alpha \vdash n} \|P_{\alpha}(f_1)\|_2 \|P_{\alpha}(f_2)\|_2\\
&\leq \sqrt{\sum_{\alpha \vdash n} \|P_{\alpha}(f_1)\|_2^2 } \sqrt{\sum_{\alpha \vdash n} \|P_{\alpha}(f_2)\|_2^2 }\\
& = \|f_1\|_2\|f_2\|_2\\
& \leq \sqrt{\mathbb{E}[f_1]\mathbb{E}[f_2]},
\end{align*}
and therefore
\begin{equation}
\label{eq:expansion}
0 = \mathbb{E}[f_1] \mathbb{E}[f_2] + \sum_{\alpha \in \mathcal{L}^{*}({\geq}n-r)} \lambda_{\alpha} \langle P_{\alpha}(f_1),P_{\alpha}(f_2)\rangle + O_r(n^{-r-1}) \sqrt{\mathbb{E}[f_1]\mathbb{E}[f_2]}.
\end{equation}
Since each $f_i$ is $(r,\epsilon)$-quasirandom, by Lemma \ref{lem:comb-alg} it is $(r,O_r(\epsilon))$-algebraically quasirandom, and therefore for each $\alpha \in \l^{*}({\geq}n-r)$, we have
$$\|P_{\alpha}(f_i)\|^2_2 = O_r(\epsilon) \mathsf{f}^{\alpha} (\mathbb{E}[f_i])^2,$$
and therefore, using Lemma \ref{lem:eval-estimate} and Cauchy-Schwarz again, we have
\begin{align*} \left|\sum_{\alpha \in \mathcal{L}^{*}({\geq}n-r)} \lambda_{\alpha} \langle P_{\alpha}(f_1),P_{\alpha}(f_2)\rangle \right| & \leq \sum_{\alpha \in \mathcal{L}^{*}({\geq}n-r)} |\lambda_{\alpha}| \cdot \|P_{\alpha}(f_1)\|_2 \|P_{\alpha}(f_1)\|_2\\
& \leq \sum_{\alpha \in \mathcal{L}^*({\geq}n-r)} O(1/\mathsf{f}^{\alpha}) \cdot  O_r(\epsilon) \mathsf{f}^{\alpha} \mathbb{E}[f_1]\mathbb{E}[f_2]\\
& = O_r(\epsilon) \mathbb{E}[f_1]\mathbb{E}[f_2],
\end{align*}
using the fact that $|\mathcal{L}_n^*({\geq}n-r)| = p(r)-1 = O_r(1)$ for all $n \geq 2r$, where $p(r)$ denotes the number of partitions of $r$. Substituting this into (\ref{eq:expansion}) yields
$$0 \geq (1-O_r(\epsilon)) \mathbb{E}[f_1] \mathbb{E}[f_2] + O_r(n^{-r-1}) \sqrt{\mathbb{E}[f_1] \mathbb{E}[f_2]},$$
and therefore, provided $\epsilon$ is sufficiently small depending on $r$, we have
$$\sqrt{\mathbb{E}[f_1] \mathbb{E}[f_2]} = O_r(n^{-r-1}),$$
as required.
\end{proof}

\section{Proof of our junta approximation result}
\label{sec:proof}
In this section, we use the tools developed above to prove Theorem \ref{thm:junta-approximation}. We may assume throughout that $n \geq C_0(r,t)$, for any fixed $C_0 = C_0(r,t) \in \mathbb{N}$ depending upon $r$ and $t$ alone.

It suffices to prove that if $\f \subset S_n$ is $(t-1)$-intersection-free, then the junta $\j = \langle J \rangle$ supplied by our regularity lemma (applied to $\f$, with $s=2r-1$) is $t$-intersecting (or equivalently, the set of bijections $J$ is $t$-intersecting). Our aim is to  prove that, if $\pi_1,\pi_2 \in J$ agree on less than $t$ points, then (using the uncaptureability of $\f(\pi_1),\f(\pi_2)$) we can find two permutations in $\f$ that agree with one another on exactly $t-1$ points, a contradiction. A slight complication arises in that we must compare pairs of uncaptureable families $\f(\pi_1),\f(\pi_2)$ for bijections $\pi_1,\pi_2$ whose domains (and ranges) may differ from one another. To deal with this complication, we produce $[0,1]$-valued functions $f,g:S_{\hat{n}} \to [0,1]$ (for some appropriate $\hat{n} \leq n$) such that:

\begin{itemize}
\item if there exist permutations $\sigma_1,\sigma_2 \in S_{\hat{n}}$ with $\sigma_1$ and $\sigma_2$ disagreeing everywhere and $f(\sigma_1)g(\sigma_2)>0$, then there exist permutations $\tau_i \in \f(\pi_i)$ (for $i=1,2$) such that $\tau_1$ and $\tau_2$ agree with one another on exactly $t-1$ points;
\item $f$ and $g$ are sufficiently quasiregular that there must exist permutations $\sigma_1,\sigma_2 \in S_{\hat{n}}$ with $\sigma_1$ and $\sigma_2$ disagreeing everywhere and $f(\sigma_1)g(\sigma_2)>0$.
\end{itemize}

Assume, for a contradiction, that there exist bijections $\pi_1,\pi_2 \in J$ such that $\pi_1$ and $\pi_2$ agree on $u<t$ points. Note that $\f(\pi_1)$ and $\f(\pi_2)$ are $(s,n^{-r})$-uncaptureable.

We first `preprocess' $\f(\pi_1)$ and $\f(\pi_2)$ as follows. Let us say that a pair $(x,y) \in [n]^2$ is {\em $\pi_1$-unique} if $\pi_1(x)=y$, $x \notin \Domain(\pi_{2})$ and $y \notin \Range(\pi_{2})$. Similarly, let us say that a pair $(x,y) \in [n]^2$ is {\em $\pi_2$-unique} if $\pi_2(x)=y$, $x \notin \Domain(\pi_{1})$ and $y \notin \Range(\pi_{1})$. Let $(x_1,y_1),\ldots,(x_k,y_k)$ denote the $\pi_1$-unique pairs. Now replace $\f(\pi_2)$ with $\f(\pi_2,\overline{\sigma_2})$, where $\sigma_2:\{x_1,\ldots,x_k\} \to \{y_1,\ldots,y_k\}$ is the bijection mapping $x_i$ to $y_i$ for each $i \in [k]$. Similarly, let $(v_1,w_1),\ldots,(v_l,w_l)$ denote the $\pi_2$-unique pairs. Now replace $\f(\pi_1)$ with $\f(\pi_1,\overline{\sigma_1})$, where $\sigma_1:\{v_1,\ldots,v_l\} \to \{w_1,\ldots,w_l\}$ is the bijection mapping $v_i$ to $w_i$ for each $i \in [l]$.

Next, we apply Lemma \ref{lem:uncap-quasi} to the above choice of $\f(\pi_1,\overline{\sigma_1}),\f(\pi_2,\overline{\sigma_2})$, with $s=2r-1$ and $b=2r-2$, obtaining two families $\f_{1}\left(\pi_{1},\overline{\sigma_1},\pi_3,\overline{\pi_{4}}\right),\f_{2}\left(\pi_2,\overline{\sigma_2},\overline{\pi_{3}},\pi_{4}\right)$
that are both $\left(1,2\sqrt{n}\right)$-quasiregular, with measures greater than $\tfrac{1}{2}n^{-r}$. (The `preprocessing' step above ensures that for all $x,y \in [n]$, whenever $\pi_1(x)=y$ with $x \notin \Domain(\pi_2)$ and $y \notin \Range(\pi_2)$, we have $\sigma_2(x)=y$, and whenever $\pi_2(x)=y$ with $x \notin \Domain(\pi_1)$ and $y \notin \Range(\pi_1)$, we have $\sigma_1(x)=y$.) Note that the bijection $\pi_1 \cup \pi_3$ agrees with the bijection $\pi_2 \cup \pi_4$ in exactly $u$ places.

By Claim \ref{claim:quasi-uncap}, applied with $\beta = 2\sqrt{n}$, $\delta = \tfrac{1}{2}n^{-r}$, $N = \lfloor \sqrt{n}/8\rfloor$, and $\h(\sigma,\overline{\rho}) = \f_{1}\left(\pi_{1},\overline{\sigma_1},\pi_3,\overline{\pi_{4}}\right)$ or $\f_{2}\left(\pi_2,\overline{\sigma_2},\overline{\pi_{3}},\pi_{4}\right)$, it follows that
\begin{equation} \label{eq:boosted-pair} \f_{1}\left(\pi_{1},\overline{\sigma_1},\pi_3,\overline{\pi_{4}}\right),\f_{2}\left(\pi_2,\overline{\sigma_2},\overline{\pi_{3}},\pi_{4}\right) \end{equation}
are both $(\lfloor \sqrt{n}/8 \rfloor,\tfrac{1}{4}n^{-r})$-uncaptureable. 

Next, we apply Lemma \ref{lem:uncap-very-quasi} to the pair (\ref{eq:boosted-pair}) with $N = \lfloor \sqrt{n}/8 \rfloor$, $c = 1/4$, $s=r+t$ and $\epsilon>0$ satisfying
$$\frac{(r \log n + 2 \log 2)(r+t)}{\log(1+\epsilon)} \leq \lfloor \sqrt{n}/8 \rfloor;$$
this holds provided
\begin{equation}\label{eq:epsilon-bound}
\epsilon = \Omega(r(r+t)n^{-1/2} \log n).
\end{equation}
(For concreteness, we may take $\epsilon = n^{-1/3}$, for example.) We obtain bijections $\pi_5,\pi_6$ bijecting between sets of size less than
$$(r \log n + 2\log 2)(r+t)/\log(1+\epsilon),$$
such that
$$\f_{1}\left(\pi_{1},\overline{\sigma_1},\pi_3,\overline{\pi_{4}},\pi_5,\overline{\pi_6}\right),\f_{2}\left(\pi_2,\overline{\sigma_2},\overline{\pi_{3}},\pi_{4},\overline{\pi_5},\pi_6\right)$$
are both $\left(r+t,1+2\epsilon\right)$-quasiregular, with measures greater than $\tfrac{1}{8}n^{-r}$.

Finally, we apply Proposition \ref{prop:extra-agreements} to the pair 
$$\f_{1}\left(\pi_{1},\overline{\sigma_1},\pi_3,\overline{\pi_{4}},\pi_5,\overline{\pi_6}\right),\f_{2}\left(\pi_2,\overline{\sigma_2},\overline{\pi_{3}},\pi_{4},\overline{\pi_5},\pi_6\right),$$
with $t-1-u$ in place of $t$ and $2\epsilon$ in place of $\epsilon$, producing a bijection $\pi_7$ between sets of size $t-1-u$, such that the domain of $\pi_7$ is disjoint from the domains of the $\pi_i$ (for $i \leq 6$) and of the $\sigma_i$ (for $i \leq 2$), the range of $\pi_7$ is disjoint from the ranges of the $\pi_i$ (for $i \leq 6$) and of the $\sigma_i$ (for $i \leq 2$),
\begin{align*}
\mu(\f_{1}\left(\pi_{1},\overline{\sigma_1},\pi_3,\overline{\pi_{4}},\pi_5,\overline{\pi_6},\pi_7\right)) & \geq (1-8\epsilon)\mu(\f_{1}\left(\pi_{1},\overline{\sigma_1},\pi_3,\overline{\pi_{4}},\pi_5,\overline{\pi_6}\right)),\\
\mu(\f_{2}\left(\pi_2,\overline{\sigma_2},\overline{\pi_{3}},\pi_{4},\overline{\pi_5},\pi_6,\pi_7\right)) & \geq (1-8\epsilon)\mu(\f_{2}\left(\pi_2,\overline{\sigma_2},\overline{\pi_{3}},\pi_{4},\overline{\pi_5},\pi_6\right)),
\end{align*}
and 
$\f_{1}\left(\pi_{1},\overline{\sigma_1},\pi_3,\overline{\pi_{4}},\pi_5,\overline{\pi_6},\pi_7\right),\ \f_{2}\left(\pi_2,\overline{\sigma_2},\overline{\pi_{3}},\pi_{4},\overline{\pi_5},\pi_6,\pi_7\right)$ are both $(r+u+1,1+16\epsilon)$-quasiregular. Now write $\rho_1 := \pi_1 \cup \pi_3 \cup \pi_5 \cup \pi_7$, $\rho_2 : = \pi_2 \cup \pi_4 \cup \pi_6 \cup \pi_7$, $\tau_1: = \sigma_1 \cup \pi_4 \cup \pi_6$ and $\tau_2 = \sigma_2 \cup \pi_3 \cup \pi_5$, so that
\begin{align} \label{eq:pi-defn} \f_{1}\left(\pi_{1},\overline{\sigma_1},\pi_3,\overline{\pi_{4}},\pi_5,\overline{\pi_6},\pi_7\right) & = \f_1(\rho_1,\overline{\tau_1}),\\
\f_{2}\left(\pi_2,\overline{\sigma_2},\overline{\pi_{3}},\pi_{4},\overline{\pi_5},\pi_6,\pi_7\right) & = \f_2(\rho_2,\overline{\tau_2}).\nonumber
\end{align}
By construction, $\f_1(\rho_1,\overline{\tau_1})$ and $\f_2(\rho_2,\overline{\tau_2})$ are both $(r+u+1,1+16\epsilon)$-quasiregular, with measures greater than $\tfrac{1}{9}n^{-r}$, and $\rho_1$ and $\rho_2$ agree with one another in exactly $t-1$ places. Moreover, we may assume that $\Domain(\rho_i) \cap \Domain(\tau_i) = \emptyset$ and $\Range(\rho_i) \cap \Range(\tau_i) = \emptyset$ for $i=1,2$, by simply deleting pairs $x \mapsto y$ in $\tau_i$ such that $x \in \Domain(\rho_i)$ or $y \in \Range(\rho_i)$. Finally, we have
\begin{equation}\label{eq:domain-bound} |\Domain(\rho_i)|+|\Domain(\tau_i)| = O(\sqrt{n}) \quad (i=1,2),\end{equation}
by construction.

Let $X$ and $Y$ be disjoint sets of size $n$, representing the domain and the range respectively of permutations in $S_n$. Fixing a bijection from $[n]$ to $X$ and a bijection from $[n]$ to $Y$, we may view permutations in $S_n$ as bijections from $X$ to $Y$, or as matchings from $X$ to $Y$. Similarly, may identify the bijections $\rho_1,\rho_2,\tau_1$ and $\tau_2$ with partial matchings $M_1,M_2,N_1,N_2$ (respectively) from $X$ to $Y$. Note that, by construction, $M_1,M_2,N_1$ and $N_2$ satisfy the following
\subsubsection*{`Good properties'.}
\begin{itemize}
\item $M_i$ is vertex-disjoint from $N_i$, for each $i \in \{1,2\}$.
\item $E(N_i) \subset E(M_{3-i})$ for each $i\in \{1,2$\}.
\item for each $i \in \{1,2\}$ and each edge $xy$ of $M_i$, exactly one of the following holds.
\begin{enumerate}
\item either $xy$ is an edge of $M_{3-i} \cup N_{3-i}$, or else
\item exactly one of $x$ and $y$ is incident to an edge of $M_{3-i}$ (in which case $xy$ is vertex-disjoint from $N_{3-i}$).
\end{enumerate}
\end{itemize}

Our first aim is to reduce to the case where all edges $xy$ of $M_i$ satisfy (1) above, for each $i \in \{1,2\}$, while (approximately) preserving the quasiregularity and high-measure properties. Let us say that edges $xy$ of $M_i$ that satisfy (1) above are of {\em type 1}, and that those satisfying (2) are of {\em type 2} (for $i = 1,2$). Since $M_i \cup N_i$ is a matching (for $i=1,2$), no edge of type 1 can be incident to an edge of type 2, and therefore the type-2 edges of $M_1 \cup M_2$ form a union of paths and cycles, where the paths have length at least 2 and the cycles have (even) length at least 4 (each cycle has even length, since its vertices alternate between $X$ and $Y$). (Here, and henceforth, the {\em length} of a path or cycle denotes its number of edges.) We will apply an algorithm that eliminates all of these paths and cycles of type-2 edges, step by step, at the cost of slightly reducing $n$ (viz., by $O(\sqrt{n})$). This algorithm will rely on the following three technical claims.

The first claim will enable us to eliminate all the cycles of type-2 edges from $M_1 \cup M_2$, one by one.

\begin{claim}
\label{claim:cycles}
Let $\eta >0$ and let $k,n,s,t \in \mathbb{N}$. Suppose that $\f_1(\rho_1,\overline{\tau_1}) \subset S_n(\rho_1,\overline{\tau_1})$ and $\f_2(\rho_2,\overline{\tau_2}) \subset S_n(\rho_2,\overline{\tau_2})$ are both $(s,1+\eta)$-quasiregular, and that $\rho_1$ and $\rho_2$ agree with one another in exactly $t-1$ places. Moreover, suppose that $\Domain(\rho_i) \cap \Domain(\tau_i) = \emptyset$ and $\Range(\rho_i) \cap \Range(\tau_i) = \emptyset$ for each $i\in \{1,2\}$. Let $X$ and $Y$ be disjoint sets of size $n$ (as above), let $M_1,M_2,N_1$ and $N_2$ be the matchings from $X$ to $Y$ associated with $\rho_1,\rho_2,\tau_1$ and $\tau_2$ respectively, and suppose that they satisfy the `good properties' above. Suppose that $M_1 \cup M_2$ contains a cycle of type-2 edges, of length $2k$. Then there exist a pair of bijections $\rho_1',\rho_2'$ between subsets of $[n-k]$ and a pair of bijections $\tau_1',\tau_2'$ between subsets of $[n-k]$, and a pair of families 
$$\f_1'(\rho_1',\overline{\tau_1'}) \subset S_{n-k}(\rho_1',\overline{\tau_1'}),\ \f_2'(\rho_2',\overline{\tau_2'}) \subset S_{n-k}(\rho_2',\overline{\tau_2'}),$$
such that
\begin{itemize}
\item $|\Domain(\rho_i')| = |\Domain(\rho_i)|-k$ for each $i\in \{1,2\}$;
\item $|\Domain(\tau_i')| = |\Domain(\tau_i)|$ for each $i \in \{1,2\}$;
\item $\Domain(\rho_i') \cap \Domain(\tau_i') = \emptyset$ and $\Range(\rho_i') \cap \Range(\tau_i') = \emptyset$, for each $i\in\{1,2\}$;
\item $\rho_1'$ and $\rho_2'$ agree wth one another in exactly $t-1$ places;
\item $\mu(\f_i'(\rho_i',\overline{\tau_i'})) = \mu(\f_i(\rho_i,\overline{\tau_i}))$ for each $i\in \{1,2\}$;
\item $\f_i'(\rho_i',\overline{\tau_i'})$ is $(s,1+\eta)$-quasiregular, for each $i\in\{1,2\}$;
\item The matchings $M_1',M_2',N_1',N_2'$ associated with $\rho_1',\rho_2',\tau_1',\tau_2'$ respectively satisfy the `good properties' above;
\item As an unlabelled graph, $M_1' \cup M_2'$ is obtained from $M_1 \cup M_2$ by the deletion of just one $(2k)$-cycle of type-2 edges;
\item If there exist two permutations
$$\sigma_1' \in \f_1'(\rho_1',\overline{\tau_1'}),\quad \sigma_2' \in \f_2'(\rho_2',\overline{\tau_2'})$$
that agree only on the $t-1$ points where $\rho_1'$ and $\rho_2'$ agree, then there exist two permutations
$$\sigma_1 \in \f_1(\rho_1,\overline{\tau_1}),\quad \sigma_2 \in \f_2(\rho_2,\overline{\tau_2})$$
that agree only on the $t-1$ points where $\rho_1$ and $\rho_2$ agree.
\end{itemize}
\end{claim}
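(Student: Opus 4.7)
The plan is to contract the cycle: delete its $2k$ vertices from the ground set on both sides, and relabel the remaining $n-k$ vertices on each side via arbitrary bijections, so that the natural restriction map gives a measure- and quasiregularity-preserving correspondence with a pair of families on $S_{n-k}$. Label the cycle $x_1 y_1 x_2 y_2 \cdots x_k y_k x_1$ so that $(x_i,y_i)\in M_1$ and $(x_{i+1},y_i)\in M_2$ (indices mod $k$); equivalently $\rho_1(x_i)=y_i$ and $\rho_2(x_{i+1})=y_i$ for all $i$. Thus $\{x_1,\ldots,x_k\}\subset\Domain(\rho_1)\cap\Domain(\rho_2)$ and $\{y_1,\ldots,y_k\}\subset\Range(\rho_1)\cap\Range(\rho_2)$, and by the disjointness hypothesis on $\rho_i,\tau_i$ none of these cycle vertices appears in $\tau_1$ or $\tau_2$.

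Fix arbitrary bijections $\phi\colon[n]\setminus\{x_1,\ldots,x_k\}\to[n-k]$ and $\psi\colon[n]\setminus\{y_1,\ldots,y_k\}\to[n-k]$. Set $\rho_i':=\psi\circ\rho_i|_{\Domain(\rho_i)\setminus\{x_1,\ldots,x_k\}}\circ\phi^{-1}$ and $\tau_i':=\psi\circ\tau_i\circ\phi^{-1}$, and let $\f_i'(\rho_i',\overline{\tau_i'})$ be the image of $\f_i(\rho_i,\overline{\tau_i})$ under $\Phi\colon\sigma\mapsto\psi\circ\sigma|_{[n]\setminus\{x_1,\ldots,x_k\}}\circ\phi^{-1}$. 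Since every $\sigma\in\f_i(\rho_i,\overline{\tau_i})$ is forced by $\rho_i$ to send $\{x_1,\ldots,x_k\}$ bijectively onto $\{y_1,\ldots,y_k\}$, the map $\Phi$ restricts to a bijection between $\f_i(\rho_i,\overline{\tau_i})$ and $\f_i'(\rho_i',\overline{\tau_i'})$, and extends to a bijection between the ambient slice-spaces $S_n(\rho_i,\overline{\tau_i})$ and $S_{n-k}(\rho_i',\overline{\tau_i'})$. The domain-size and disjointness clauses are then immediate, and the $(t-1)$-agreement condition is preserved because $\rho_1$ and $\rho_2$ never agree on any cycle vertex ($\rho_1(x_j)=y_j\neq y_{j-1}=\rho_2(x_j)$), so deletion of cycle vertices costs no agreements.

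The bijective correspondence at the level of slice-spaces also forces measure-preservation, and $(s,1+\eta)$-quasiregularity then transfers because any bijection $\pi'$ on $[n-k]$ pulls back via $\phi,\psi$ to a bijection $\pi$ on $[n]$ with domain and range outside the cycle, and $\Phi$ identifies $\f_i(\rho_i,\overline{\tau_i},\pi)$ with $\f_i'(\rho_i',\overline{\tau_i'},\pi')$ and likewise at the ambient level, so the ratios defining quasiregularity transfer verbatim. For the good-properties clause, the matchings $N_1,N_2$ are untouched because $N_i$ is vertex-disjoint from $M_i$, which contains every cycle vertex, and $E(N_i)\subset E(M_{3-i})$ contains no cycle edge (again by vertex-disjointness, since every cycle vertex lies in $V(M_i)$); the type of any remaining edge of $M_i$ is unchanged because its non-cycle endpoints have unchanged incidences to $M_{3-i}$. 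Finally, given $\sigma_i'\in\f_i'(\rho_i',\overline{\tau_i'})$ agreeing only on the $t-1$ places where $\rho_1',\rho_2'$ agree, the preimages $\sigma_i:=\Phi^{-1}(\sigma_i')\in\f_i(\rho_i,\overline{\tau_i})$ satisfy $\sigma_1(x_j)=y_j\neq y_{j-1}=\sigma_2(x_j)$ on each cycle $X$-vertex, so their total number of agreements is exactly $t-1$. No single step is a serious obstacle; the whole proof is essentially bookkeeping around the change-of-labels $\Phi$, with the only mild subtlety being the verification that $\Phi$ bijects slice-spaces at every level, which is what makes both measure and quasiregularity transfer cleanly.
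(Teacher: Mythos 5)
Your proof is correct, and it takes essentially the same approach as the paper: delete the $k$ cycle-$X$-vertices and $k$ cycle-$Y$-vertices and transfer all the claimed properties through the resulting bijection between slice-spaces. The only (cosmetic) difference is that the paper first left-translates $\mathcal{F}_2(\rho_2,\overline{\tau_2})$ by the $k$-cycle $\theta = (y_1\ y_2\ \ldots\ y_k)$ so that $\rho_1$ and $\theta\rho_2$ agree on all cycle $X$-vertices before deleting them, whereas you skip this alignment by observing that each $\rho_i$ already sends $\{x_1,\ldots,x_k\}$ bijectively onto $\{y_1,\ldots,y_k\}$, so the restriction is a well-defined bijection on each slice separately --- a valid and slightly cleaner shortcut that produces exactly the same families.
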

\begin{proof}[Proof of claim.]
Let $C$ be a cycle of type-2 edges of $M_1 \cup M_2$, of length $2k$. Since $M_1$ and $M_2$ are both matchings, the edges of $C$ alternate between $M_1$ and $M_2$. Let $C = x_1y_1x_2y_2 \ldots x_k y_k x_1$, where $x_j \in X$ and $y_j \in Y$ for all $j \in [k]$. Without loss of generality, we may assume that $x_jy_j$ is an $M_1$-edge for each $j \in [k]$, and that $x_2 y_1$, $x_3y_2$, $\ldots$, $x_k y_{k-1}$ and $x_1 y_k$ are $M_2$-edges. It follows that $\rho_1(x_j) = y_j$ for all $j \in [k]$ and that $\rho_2(x_{j+1})=y_j$ for all $j \in [k]$ (where addition in the index is modulo $k$, i.e.\ $\rho_2(x_{1}) = y_k$). We now let $\theta \in S_{n}$ be the $k$-cycle defined by
$$\theta = (y_1\ y_2\ y_3 \ldots y_k),$$
and we consider the pair of families
$$\f_1(\rho_1,\overline{\tau_1}),\quad \theta (\f_2(\rho_2,\overline{\tau_2})) \subset S_{n}(\theta \rho_2,\overline{\tau_2}).$$
(We remark that the range of $\tau_2$ is disjoint from $\{y_1,\ldots,y_{k}\}$.) Note that any pair of permutations $\sigma_1 \in \f_1(\rho_1,\overline{\tau_1}),\ \sigma_2 \in \theta \f_2(\rho_2,\overline{\tau_2})$ map $x_j$ to $y_j$ for each $j \in [k]$, and in addition they agree on the $t-1$ points where $\rho_1$ and $\rho_2$ agree, but if they disagree everywhere else, then the two permutations
$$\sigma_1 \in \f_1(\rho_1,\overline{\tau_1}),\quad \theta^{-1} \sigma_2 \in \f_2(\rho_2,\overline{\tau_2})$$
agree only on the $t-1$ points where $\rho_1$ and $\rho_2$ agree. We now let $\rho_i''$ be the bijection obtained from $\rho_i$ by deleting $x_1,\ldots,x_k$ from the domain and deleting $y_1,\ldots,y_k$ from the range (for $i=1,2$), and we let
$$\f_1''(\rho_1'',\overline{\tau_1}),\quad \f_2''(\rho_2'',\overline{\tau_2})$$
be the families obtained from
$$\f_1(\rho_1,\overline{\tau_1}),\quad \theta (\f_2(\rho_2,\overline{\tau_2}))$$
respectively, by deleting $x_1,\ldots,x_k$ from the domain and deleting $y_1,\ldots,y_k$ from the range. The two families $\f_1''(\rho_1'',\overline{\tau_1}),\ \f_2''(\rho_2'',\overline{\tau_2})$ are of course families of bijections from $X \setminus \{x_1,\ldots,x_k\}$ to $Y \setminus \{y_1,\ldots,y_k\}$, but by `relabelling' (i.e., by applying fixed permutations to $X$ and $Y$), we can view them as (or replace them by) families of permutations in $S_{n-k}$. Let $\mathcal{F}_1'$ and $\mathcal{F}_2'$ be the families, and $\rho_1',\rho_2',\tau_1'$ and $\tau_2'$ the bijections, produced from $\mathcal{F}_1'',\mathcal{F}_2'',\rho_1'',\rho_2'',\tau_1$ and $\tau_2$ respectively, by this relabelling. By construction, if there exist two permutations
$$\sigma_1' \in \f_1'(\rho_1',\overline{\tau_1'}),\quad \sigma_2' \in \f_2'(\rho_2',\overline{\tau_2'})$$
that agree only on the $t-1$ points where $\rho_1'$ and $\rho_2'$ agree, then there exist two permutations
$$\sigma_1 \in \f_1(\rho_1,\overline{\tau_1}),\quad \sigma_2 \in \f_2(\rho_2,\overline{\tau_2})$$
that agree only on the $t-1$ points where $\rho_1$ and $\rho_2$ agree. Moreover, $\f_1'(\rho_1',\overline{\tau_1'})$ and $\f_2(\rho_2',\overline{\tau_2'})$ are both $(s,1+\eta)$-quasiregular, since $\f_1(\rho_1,\overline{\tau_1})$ and $\f_2(\rho_2,\overline{\tau_2})$ both are, and $\f_i'(\rho_i',\overline{\tau_i'})$ has the same measure as $\f_i(\rho_i,\overline{\tau_i})$ (for each $i\in \{1,2\}$). By construction, the matchings $M_1',M_2',N_1',N_2'$ associated with $\rho_1',\rho_2',\tau_1',\tau_2'$ respectively, satisfy the `good properties' above, and as an unlabelled graph, $M_1' \cup M_2'$ is obtained from $M_1 \cup M_2$ by the deletion of just one cycle of type-2 edges, having length $2k$.

\end{proof}

The next claim will enable us to eliminate all the even-length paths of type-2 edges from $M_1 \cup M_2$, one by one.

\begin{claim}
\label{claim:even-paths}
Let $\eta >0$ and let $k,n,s,t \in \mathbb{N}$. Suppose that $\f_1(\rho_1,\overline{\tau_1}) \subset S_n(\rho_1,\overline{\tau_1})$ and $\f_2(\rho_2,\overline{\tau_2}) \subset S_n(\rho_2,\overline{\tau_2})$ are both $(s,1+\eta)$-quasiregular, and that $\rho_1$ and $\rho_2$ agree with one another in exactly $t-1$ places. Moreover, suppose that $\Domain(\rho_i) \cap \Domain(\tau_i) = \emptyset$ and $\Range(\rho_i) \cap \Range(\tau_i) = \emptyset$ for each $i\in \{1,2\}$. Let $M_1,M_2,N_1$ and $N_2$ be the matchings from $X$ to $Y$ associated with $\rho_1,\rho_2,\tau_1$ and $\tau_2$ respectively, and suppose that they satisfy the `good properties' above. Suppose that $M_1 \cup M_2$ contains at least one path of type-2 edges, of length $2k$. Then there exist a pair of bijections $\rho_1',\rho_2'$ between subsets of $[n-k]$, a pair of bijections $\tau_1',\tau_2'$ between subsets of $[n-k]$, and a pair of families 
$$\f_1'(\rho_1',\overline{\tau_1'}) \subset S_{n-k}(\rho_1',\overline{\tau_1'}),\ \f_2'(\rho_2',\overline{\tau_2'}) \subset S_{n-k}(\rho_2',\overline{\tau_2'}),$$
such that
\begin{itemize}
\item $|\Domain(\rho_i')| = |\Domain(\rho_i)|-k$ for each $i\in \{1,2\}$;
\item $|\Domain(\tau_i')| = |\Domain(\tau_i)|$ for each $i \in \{1,2\}$;
\item $\Domain(\rho_i') \cap \Domain(\tau_i') = \emptyset$ and $\Range(\rho_i') \cap \Range(\tau_i') = \emptyset$, for each $i\in\{1,2\}$;
\item $\rho_1'$ and $\rho_2'$ agree with one another in exactly $t-1$ places;
\item $\mu(\f_i'(\rho_i',\overline{\tau_i'})) = \mu(\f_i(\rho_i,\overline{\tau_i}))$ for each $i\in \{1,2\}$;
\item $\f_i'(\rho_i',\overline{\tau_i'})$ is $(s,1+\eta)$-quasiregular, for each $i\in\{1,2\}$;
\item The matchings $M_1',M_2',N_1',N_2'$ associated with $\rho_1',\rho_2',\tau_1',\tau_2'$ respectively, satisfy the `good properties' above; 
\item As an unlabelled graph, $M_1' \cup M_2'$ is obtained from $M_1 \cup M_2$ by the deletion of just one path of type-2 edges of length $2k$;
\item If there exist two permutations
$$\sigma_1' \in \f_1'(\rho_1',\overline{\tau_1'}),\quad \sigma_2' \in \f_2'(\rho_2',\overline{\tau_2'})$$
that agree only on the $t-1$ points where $\rho_1'$ and $\rho_2'$ agree, then there exist two permutations
$$\sigma_1 \in \f_1(\rho_1,\overline{\tau_1}),\quad \sigma_2 \in \f_2(\rho_2,\overline{\tau_2})$$
that agree only on the $t-1$ points where $\rho_1$ and $\rho_2$ agree.
\end{itemize}
\end{claim}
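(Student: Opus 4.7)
The plan parallels the proof of Claim \ref{claim:cycles}, but must be modified to deal with the ``loose ends'' of an open path. Label the type-$2$ path as $x_1 y_1 x_2 y_2 \cdots x_k y_k x_{k+1}$, where the $M_1$-edges are $\{x_i y_i\}_{i=1}^k$ and the $M_2$-edges are $\{y_i x_{i+1}\}_{i=1}^k$; so $\rho_1(x_i) = y_i$ for $i \in [k]$ and $\rho_2(x_{i+1}) = y_i$ for $i \in [k]$. I will shrink the ground set by $k$, deleting $\{x_2, \ldots, x_{k+1}\}$ from $X$ and $\{y_1, \ldots, y_k\}$ from $Y$ (thus retaining $x_1$), and relabel the remaining universe as $[n-k]$. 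The good properties guarantee that none of the deleted vertices lies in $V(\tau_1) \cup V(\tau_2)$: the ``interior'' path vertices $\{x_2, \ldots, x_k, y_1, \ldots, y_k\}$ all lie in $V(M_1) \cap V(M_2)$ and hence avoid $V(N_1) = V(\tau_1)$ and $V(N_2) = V(\tau_2)$ by the vertex-disjointness clause, while $x_{k+1}$ avoids $V(\tau_1)$ because the type-$2$ edge $y_k x_{k+1}$ is vertex-disjoint from $N_1$. I may therefore set $\tau_i' := \tau_i$, take $\rho_1'$ to be $\rho_1$ restricted to $\Domain(\rho_1) \setminus \{x_1, \ldots, x_k\}$, and $\rho_2'$ to be $\rho_2$ restricted to $\Domain(\rho_2) \setminus \{x_2, \ldots, x_{k+1}\}$.

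The heart of the construction is a pair of bijections $\phi_i \colon S_n(\rho_i, \overline{\tau_i}) \to S_{n-k}(\rho_i', \overline{\tau_i'})$. For $i = 2$ the map $\phi_2$ is simply restriction to $X' := X \setminus \{x_2, \ldots, x_{k+1}\}$; since $\sigma_2$ sends $\{x_2, \ldots, x_{k+1}\}$ onto $\{y_1, \ldots, y_k\}$, this lands in $Y' := Y \setminus \{y_1, \ldots, y_k\}$. For $i = 1$ the naive restriction is not onto (the value $y_1 = \sigma_1(x_1)$ is deleted), so I ``re-route'' by setting $\phi_1(\sigma_1)(x_1) := \sigma_1(x_{k+1}) \in Y'$ and $\phi_1(\sigma_1)(x) := \sigma_1(x)$ for $x \in X' \setminus \{x_1\}$; the inverse is immediate (put $\sigma_1(x_j) := y_j$ for $j \in [k]$ and $\sigma_1(x_{k+1}) := \sigma_1'(x_1)$). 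Setting $\f_i' := \phi_i(\f_i(\rho_i, \overline{\tau_i}))$ therefore gives $\mu(\f_i') = \mu(\f_i(\rho_i, \overline{\tau_i}))$. The $(s, 1+\eta)$-quasiregularity of $\f_i'$ then descends from that of $\f_i$, because every test-bijection $\pi'$ for $\f_i'$ of size $s$ corresponds under $\phi_i$ to a test-bijection $\pi$ for $\f_i$ of the same size, with (for $i = 1$) the coordinate ``$x_1$'' in $\Domain(\pi')$ playing the role of ``$x_{k+1}$'' in $\Domain(\pi)$.

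The main subtlety, and the step I expect to require the most care, is the agreement count. A direct computation shows that $\sigma_1$ and $\sigma_2$ must disagree everywhere on the path: at $x_j$ for $2 \le j \le k$ one has $\sigma_1(x_j) = y_j \neq y_{j-1} = \sigma_2(x_j)$, and at $x_1, x_{k+1}$ neither $\sigma_2(x_1)$ nor $\sigma_1(x_{k+1})$ can lie in $\{y_1, \ldots, y_k\}$ (those values are already used by the respective permutations). Meanwhile $\phi_1(\sigma_1)$ and $\phi_2(\sigma_2)$ agree on $X' \setminus \{x_1\}$ precisely where $\sigma_1, \sigma_2$ do, plus possibly at $x_1$ iff $\sigma_1(x_{k+1}) = \sigma_2(x_1)$. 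Since $\rho_1'$ and $\rho_2'$ still agree in precisely the $t-1$ original intersection points, all of which lie in $X' \setminus \{x_1\}$, any pair $\sigma_1' \in \f_1', \sigma_2' \in \f_2'$ whose agreement count is exactly $t-1$ must have these $t-1$ agreements located at the original intersection points and not at $x_1$; lifting via $\phi_i^{-1}$ then delivers $\sigma_i \in \f_i$ agreeing in exactly the same $t-1$ places. Finally, the ``good properties'' for $M_1', M_2', N_1', N_2'$ persist trivially: only the edges of the deleted path have been removed from $M_1 \cup M_2$ (with $N_i$ unchanged), and the type-$1$/type-$2$ classification of the remaining edges depends only on vertices outside the path. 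The off-by-one between the old and new agreement counts (arising from a possible spurious agreement at $x_1$) is the main obstacle, and it is circumvented precisely by noticing that the $t-1$ forced agreements of any valid pair can never occur at $x_1$.
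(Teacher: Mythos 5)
Your proof is correct and follows essentially the same route as the paper: your re-routing map $\phi_1$ (sending $x_1$ to $\sigma_1(x_{k+1})$ and keeping the rest) is precisely right-composition of $\sigma_1$ with the cycle $(x_{k+1}\,x_k\,\cdots\,x_1)$ followed by deletion, so the two constructions differ only in that the paper applies the analogous cycle-shift to $\f_2$ and retains the endpoint $x_{k+1}$ whereas you shift $\f_1$ and retain $x_1$. All the verification steps (measure preservation, quasiregularity via the bijective correspondence of test bijections, forced disagreement along the path, persistence of the good properties) line up with the paper's argument.
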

\begin{proof}[Proof of claim.]
Let $P$ be a path of type-2 edges in $M_1 \cup M_2$, of length $2k$. As in the proof of Claim \ref{claim:cycles}, since $M_1$ and $M_2$ are both matchings, the edges of $P$ alternate between $M_1$ and $M_2$. Without loss of generality, we may assume that $P$ has both end-vertices in $X$. (The case where $P$ has both end-vertices in $Y$ can be dealt with by interchanging $X$ and $Y$.) Let $P = x_1y_1x_2y_2 \ldots x_{k}y_{k}x_{k+1}$. Without loss of generality, we may assume that $x_j y_j$ is an $M_1$-edge for all $j \in [k]$, and that $x_{j+1} y_{j}$ is an $M_2$-edge, for all $j \in [k]$. It follows that $\rho_1(x_j) = y_j$ for all $j \in [k]$ and that $\rho_2(x_{j+1})=y_j$ for all $j \in [k]$. We now let $\theta \in S_{n}$ be the $(k+1)$-cycle defined by
$$\theta = (x_1\ x_2\ x_3 \ldots x_{k+1}),$$
and we consider the pair of families
$$\f_1(\rho_1,\overline{\tau_1}),\quad \f_2(\rho_2,\overline{\tau_2})\theta \subset S_{n}(\rho_2 \theta,\overline{\tau_2}).$$
(We remark that the domain of $\tau_2$ is disjoint from $\{x_1,\ldots,x_{k+1}\}$.) Note that any pair of permutations $\sigma_1 \in \f_1(\rho_1,\overline{\tau_1}),\ \sigma_2 \in \f_2(\rho_2,\overline{\tau_2})\theta$ map $x_j$ to $y_j$ for each $j \in [k]$, and in addition they agree on the $t-1$ points where $\rho_1$ and $\rho_2$ agree, but if they disagree everywhere else, then the two permutations
$$\sigma_1 \in \f_1(\rho_1,\overline{\tau_1}),\quad \theta^{-1} \sigma_2 \in \f_2(\rho_2,\overline{\tau_2})$$
agree only on the $t-1$ points where $\rho_1$ and $\rho_2$ agree. We now let $\rho_i''$ be the bijection obtained from $\rho_i$ by deleting $x_1,\ldots,x_{k}$ from the domain and deleting $y_1,\ldots,y_{k}$ from the range (for $i=1,2$), and we let
$$\f_1''(\rho_1'',\overline{\tau_1}),\quad \f_2''(\rho_2'',\overline{\tau_2})$$
be the families obtained from
$$\f_1(\rho_1,\overline{\tau_1}),\quad \f_2(\rho_2,\overline{\tau_2})\theta$$
respectively, by deleting $x_1,\ldots,x_{k}$ from the domain and deleting $y_1,\ldots,y_{k}$ from the range. As in the proof of Claim \ref{claim:cycles}, the families $\f_1''(\rho_1'',\overline{\tau_1}),\ \f_2''(\rho_2'',\overline{\tau_2})$ are of course families of bijections from $X \setminus \{x_1,\ldots,x_k\}$ to $Y \setminus \{y_1,\ldots,y_k\}$, but by `relabelling' (i.e., by applying fixed permutations to $X$ and $Y$), we may view them as (or replace them by) families of permutations in $S_{n-k}$. Let $\mathcal{F}_1'$ and $\mathcal{F}_2'$ be the families, and $\rho_1',\rho_2',\tau_1'$ and $\tau_2'$ the bijections, produced from $\mathcal{F}_1'',\mathcal{F}_2'',\rho_1'',\rho_2'',\tau_1$ and $\tau_2$ respectively, by this relabelling. By construction, if there exist two permutations
$$\sigma_1' \in \f_1'(\rho_1',\overline{\tau_1'}),\quad \sigma_2' \in \f_2'(\rho_2',\overline{\tau_2'})$$
that agree only on the $t-1$ points where $\rho_1'$ and $\rho_2'$ agree, then there exist two permutations
$$\sigma_1 \in \f_1(\rho_1,\overline{\tau_1}),\quad \sigma_2 \in \f_2(\rho_2,\overline{\tau_2})$$
that agree only on the $t-1$ points where $\rho_1$ and $\rho_2$ agree. Moreover, $\f_1'(\rho_1',\overline{\tau_1'})$ and $\f_2(\rho_2',\overline{\tau_2'})$ are both $(s,1+\eta)$-quasiregular, since $\f_1(\rho_1,\overline{\tau_1})$ and $\f_2(\rho_2,\overline{\tau_2})$ both are, and $\f_i'(\rho_i',\overline{\tau_i'})$ has the same measure as $\f_i(\rho_i,\overline{\tau_i})$ (for each $i \in \{1,2\}$). By construction, the matchings $M_1',M_2',N_1',N_2'$ associated with $\rho_1',\rho_2',\tau_1',\tau_2'$ respectively, satisfy the `good properties' above, and as an unlabelled graph, $M_1' \cup M_2'$ is obtained from $M_1 \cup M_2$ by the deletion of just one path of type-2 edges, having length $2k$.
\end{proof}

Our final claim will enable us to eliminate all the odd-length paths of type-2 edges from $M_1 \cup M_2$. It will be more convenient for us to deal with all of these paths at once, rather than one by one, despite the slight notational complication.

\begin{claim}
\label{claim:odd-paths}
Let $0 < \eta \leq 1$ and let $b,K,Q,n,s,t \in \mathbb{N}$ with
\begin{equation}\label{eq:lower-bound-n} n \geq b+\max\{Q^2,16/\eta^2\}.
\end{equation}
Suppose that $\f_1(\rho_1,\overline{\tau_1}) \subset S_n(\rho_1,\overline{\tau_1})$ and $\f_2(\rho_2,\overline{\tau_2}) \subset S_n(\rho_2,\overline{\tau_2})$ are both $(s,1+\eta)$-quasiregular, and that $\rho_1$ and $\rho_2$ agree with one another in exactly $t-1$ places. Moreover, suppose that $|\Domain(\rho_i)|+|\Domain(\tau_i)| \leq b$ for each $i \in \{1,2\}$, and that $\Domain(\rho_i) \cap \Domain(\tau_i) = \emptyset$ and $\Range(\rho_i) \cap \Range(\tau_i) = \emptyset$ for each $i\in\{1,2\}$. Let $M_1,M_2,N_1$ and $N_2$ be the matchings from $X$ to $Y$ associated with $\rho_1,\rho_2,\tau_1$ and $\tau_2$ respectively, and suppose that they satisfy the `good properties' above. Suppose that $M_1 \cup M_2$ contains exactly $Q$ paths of type-2 edges having odd length, and that these have total length $2K+Q$. Then there exist a pair of bijections $\rho_1',\rho_2'$ between subsets of $[n-K]$, a pair of bijections $\tau_1',\tau_2'$ between subsets of $[n-K]$, and a pair of families 
$$\f_1'(\rho_1',\overline{\tau_1'}) \subset S_{n-K}(\rho_1',\overline{\tau_1'}),\ \f_2'(\rho_2',\overline{\tau_2'}) \subset S_{n-K}(\rho_2',\overline{\tau_2'}),$$
such that
\begin{itemize}
\item $|\Domain(\rho_i')| = |\Domain(\rho_i)|-k$ for each $i\in \{1,2\}$;
\item $|\Domain(\tau_i')| = |\Domain(\tau_i)|$ for each $i \in \{1,2\}$;
\item $\Domain(\rho_i') \cap \Domain(\tau_i') = \emptyset$ and $\Range(\rho_i') \cap \Range(\tau_i') = \emptyset$, for each $i\in\{1,2\}$;
\item $\rho_1'$ and $\rho_2'$ agree wth one another in exactly $t-1$ places;
\item $\mu(\f_i'(\rho_i',\overline{\tau_i'})) \geq \tfrac{1}{2}\mu(\f_i(\rho_i,\overline{\tau_i}))$ for each $i\in \{1,2\}$;
\item $\f_i'(\rho_i',\overline{\tau_i'})$ is $(s,1+3\eta)$-quasiregular, for each $i\in\{1,2\}$;
\item The matchings $M_1',M_2',N_1',N_2'$ associated with $\rho_1',\rho_2',\tau_1',\tau_2'$ respectively, satisfy the `good properties' above; 
\item As an unlabelled graph, $M_1' \cup M_2'$ is obtained from $M_1 \cup M_2$ by deleting all $Q$ of the paths of type-2 edges of odd length, and replacing each by a type-1 edge;
\item If there exist two permutations
$$\sigma_1' \in \f_1'(\rho_1',\overline{\tau_1'}),\quad \sigma_2' \in \f_2'(\rho_2',\overline{\tau_2'})$$
that agree only on the $t-1$ points where $\rho_1'$ and $\rho_2'$ agree, then there exist two permutations
$$\sigma_1 \in \f_1(\rho_1,\overline{\tau_1}),\quad \sigma_2 \in \f_2(\rho_2,\overline{\tau_2})$$
that agree only on the $t-1$ points where $\rho_1$ and $\rho_2$ agree.
\end{itemize}
\end{claim}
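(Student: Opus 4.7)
The plan is to process the $Q$ odd paths one at a time, handling both the ``$M_1$-excess'' and ``$M_2$-excess'' types symmetrically. Consider first an $M_1$-excess odd path $P_j = x_1 y_1 x_2 y_2 \cdots x_{k_j+1} y_{k_j+1}$ whose endpoints $x_1, y_{k_j+1}$ lie only on $M_1$-edges, so that $\rho_1(x_i) = y_i$ for $1 \le i \le k_j+1$ and $\rho_2(x_{i+1}) = y_i$ for $1 \le i \le k_j$. Let $\theta_j = (y_1\ y_2\ \cdots\ y_{k_j+1}) \in S_n$. I would replace $\f_2(\rho_2, \overline{\tau_2})$ in two moves: first pass to the subfamily $\f_2(\rho_2, \overline{\tau_2}, \overline{x_1 \mapsto y_{k_j+1}})$, using Claim~\ref{claim:quasi-small-error} with $r=1$ to control both the measure and quasiregularity losses; then apply the left action of $\theta_j$, obtaining a subfamily of $S_n(\theta_j \rho_2, \overline{\theta_j \tau_2 \cup \{x_1 \mapsto y_1\}})$. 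One checks directly that $\theta_j \rho_2$ agrees with $\rho_1$ on $x_2,\ldots,x_{k_j+1}$, so the on-path edges $x_i y_i$ ($2 \le i \le k_j+1$) become type-$1$ edges of the new combined matching, while $x_1 y_1 \in M_1$ is promoted to type-$1$ by membership in the enlarged $N_2$. Finally, I would delete $x_2,\ldots,x_{k_j+1}$ from $X$ and $y_2,\ldots,y_{k_j+1}$ from $Y$ and relabel to sit in $S_{n - k_j}$; the $M_2$-excess case is handled by the symmetric operation on $\f_1$.

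The structural claims of the statement follow by direct inspection: the domain-size changes are immediate from the definition of the operation, the agreement count $|\rho_1' \cap \rho_2'| = t-1$ holds because $\theta_j$ acts as the identity off the path and every new on-path agreement $(x_i, y_i)$ for $2 \le i \le k_j+1$ involves a removed vertex, and the good properties are preserved because the new $N_2$-edge $x_1 y_1$ is vertex-disjoint from the reduced $M_2$ (the $\theta_j$-rotation together with vertex deletion erases all on-path $\rho_2$-edges), while every off-path edge inherits its original type.

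For the quantitative control, each step introduces one new forbidden pair and applies a group action, so by Claim~\ref{claim:quasi-small-error} the measure of each family drops by at most a factor $1 - 2/\sqrt{n-b}$ and its $(s,1+\eta)$-quasiregularity degrades to at worst $(s,(1+\eta)(1+4/\sqrt{n-b}))$. Iterating over the $Q$ paths and using $Q \le \sqrt{n-b}$ and $\eta \ge 4/\sqrt{n-b}$, an elementary estimate yields that the total measure ratio is at least $\tfrac{1}{2}$ and the final quasiregularity parameter is at most $1+3\eta$. For the lift, given $\sigma_1', \sigma_2'$ in the new families that disagree outside the $t-1$ agreement points, I would extend each to $S_n$ by the forced assignments $x_i^{(j)} \mapsto y_i^{(j)}$ on the deleted path vertices and compose $\sigma_2$ with $\theta_j^{-1}$; then $\sigma_1, \sigma_2 \in \f_1, \f_2$ agree precisely on the original $t-1$ off-path agreement set, because the on-path disagreement at $x_1^{(j)}$ is enforced exactly by the forbidden-pair constraint we added.

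The main difficulty is the cumulative quasiregularity bookkeeping over $Q$ iterations: the error is cumulative and each individual step loses very little, so verifying that the total degradation stays within $1+3\eta$ requires carefully balancing the hypotheses $n \ge b + Q^2$ and $\eta \ge 4/\sqrt{n-b}$. The structural combinatorics itself is essentially forced by parity: a single $\theta_j$-twist converts all but one on-path edge to type $1$, and the surviving edge must then be promoted by an additional forbidden-pair constraint, since no pure group action can resolve the parity mismatch between $M_1$ and $M_2$ along an odd path.
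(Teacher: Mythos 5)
Your structural approach is essentially the same as the paper's: each odd-length type-2 path is collapsed to a single type-1 edge by a cyclic rotation of the path vertices combined with one new forbidden pair, followed by deletion of the interior vertices. The difference is purely in organisation (the paper keeps the last edge and cycles on $X$; you keep the first edge and cycle on $Y$), and that part of your argument is fine.

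The gap is in the quantitative bookkeeping, and it is not a small gap. You process the $Q$ odd paths one at a time, adding one forbidden pair per step, and invoke Claim~\ref{claim:quasi-small-error} with $r=1$ at each step, quoting the degradation factor $1 + 4/\sqrt{n-b}$ in the quasiregularity parameter. Iterating this $Q$ times gives a cumulative factor of $(1+4/\sqrt{n-b})^Q$. Under the hypotheses $n-b \geq Q^2$ and $n-b \geq 16/\eta^2$, the worst case is $Q = \sqrt{n-b}$, for which $(1+4/\sqrt{n-b})^Q \geq e^{4(1-o(1))} \approx 55$, independent of $\eta$. That does not come down to $1+3\eta$ for small $\eta$; your claimed ``elementary estimate'' using $Q \leq \sqrt{n-b}$ and $\eta \geq 4/\sqrt{n-b}$ does not close this. (The factor $1+4/\sqrt{n-b}$ in the statement of Claim~\ref{claim:quasi-small-error} is a worst-case bound over all $r$ with $r \leq \sqrt{n-b}$; for $r=1$ the proof actually gives $1+\Theta(1/(n-b))$, and only that sharper per-step bound would survive $Q$ iterations — but you did not prove or invoke that sharper bound.)

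The paper avoids the issue by removing all $Q$ odd paths \emph{simultaneously}: the $Q$ new forbidden pairs (one per path) are collected into a single bijection $\pi$ with $|\Domain(\pi)| \leq Q$, and Claim~\ref{claim:quasi-small-error} is applied \emph{once} with $r \leq Q$. The hypothesis $n \geq b + Q^2$ is there precisely so that $n \geq r^2 + b$ holds for this single application; one then gets a single factor $1+4/\sqrt{n-b} \leq 1+\eta$ (using $n-b \geq 16/\eta^2$), whence $(1+\eta)(1+4/\sqrt{n-b}) \leq (1+\eta)^2 \leq 1+3\eta$ for $\eta \leq 1$, and $1 - 2/\sqrt{n-b} \geq 1 - \eta/2 \geq 1/2$ for the measure. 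To repair your argument, either bundle the $Q$ forbidden pairs into one application of Claim~\ref{claim:quasi-small-error} as the paper does, or re-prove the $r=1$ case of that claim with the sharper $1+O(1/(n-b))$ factor before iterating.
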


\begin{proof}[Proof of claim.]
Let $P_1,\ldots,P_Q$ denote the odd-length paths of type-2 edges in $M_1 \cup M_2$. As in the proof of Claim \ref{claim:cycles}, since $M_1$ and $M_2$ are both matchings, the edges of $P_q$ alternate between $M_1$ and $M_2$, for each $q \in [Q]$. Moreover, one end of $P_q$ must be in $X$ and the other must be in $Y$. Since $P_q$ has odd length, either it starts and ends with an edge of $M_1$, or else it starts and ends with an edge of $M_2$. For each $i \in \{1,2\}$, let $\mathcal{Q}_i$ denote the set of indices $q \in [Q]$ such that $P_q$ starts and ends with an edge of $M_i$. For each $q \in [Q]$, write $P_q = x_1^{(q)}y_1^{(q)}x_2^{(q)}y_2^{(q)} \ldots x_{k(q)}^{(q)}y_{k(q)}^{(q)}x_{k(q)+1}^{(q)}y_{k(q)+1}^{(q)}$, and let $\theta_q \in S_{n}$ be the $(k(q)+1)$-cycle defined by
$$\theta_q = (x_1^{(q)}\ x_2^{(q)}\ x_3^{(q)} \ldots x_{k(q)+1}^{(q)}).$$
We now let
$$\theta := \prod_{q \in \mathcal{Q}_1}\theta_q,\quad \theta': = \prod_{q \in \mathcal{Q}_2}\theta_q.$$
For each $i \in \{1,2\}$, we let $\tau_i^{*}$ be the bijection obtained from $\tau_i$ by adjoining $x_{k(q)+1}^{(q)} \mapsto y_{k(q)+1}^{(q)}$ to $\tau_i$, for each $q \in \mathcal{Q}_{3-i}$, and we let $\tau_i''$ be the bijection obtained from $\tau_i$ by adjoining $x_{1}^{(q)} \mapsto y_{k(q)+1}^{(q)}$ to $\tau_i$, for each $q \in \mathcal{Q}_{3-i}$. Consider the pair of families
$$\f_1(\rho_1,\overline{\tau_1''})\theta' \subset S_{n}(\rho_1 \theta',\overline{\tau_1^{*}}) ,\quad \f_2(\rho_2,\overline{\tau_2''})\theta \subset S_{n}(\rho_2 \theta, \overline{\tau_2^{*}}),$$
and let 
$$\f_1''(\rho_1'',\overline{\tau_1''}),\quad \f_2''(\rho_2'',\overline{\tau_2''})$$
be the families obtained from
$$\f_1(\rho_1,\overline{\tau_1''}) \theta',\quad \f_2(\rho_2,\overline{\tau_2''})\theta$$
respectively, by deleting $\{x_j^{(q)}:\ q \in [Q],\ j \in [k(q)]\}$ from the domain and deleting $\{y_j^{(q)}:\ q \in [Q],\ j \in [k(q)]\}$ from the range. Note that $K = \sum_{q=1}^{Q}k(q)$. The two families $\f_1''(\rho_1'',\overline{\tau_1''}),\quad \f_2''(\rho_2'',\overline{\tau_2''})$ are families of bijections from $X \setminus \{x_j^{(q)}:\ q \in [Q],\ j \in [k(q)]\}$ to $Y \setminus \{y_j^{(q)}:\ q \in [Q],\ j \in [k(q)]\}$, but by `relabelling' (i.e., by applying fixed permutations to $X$ and $Y$), we may view them as (or replace them by) families of permutations in $S_{n-K}$. Let $\mathcal{F}_1'$ and $\mathcal{F}_2'$ be the families, and $\rho_1',\rho_2',\tau_1'$ and $\tau_2'$ the bijections, produced from $\mathcal{F}_1'',\mathcal{F}_2'',\rho_1'',\rho_2'',\tau_1''$ and $\tau_2''$ respectively, by this relabelling. By construction, if there exist two permutations
$$\sigma_1' \in \f_1'(\rho_1',\overline{\tau_1'}),\quad \sigma_2' \in \f_2'(\rho_2',\overline{\tau_2'})$$
that agree only on the $t-1$ points where $\rho_1'$ and $\rho_2'$ agree, then there exist two permutations
$$\sigma_1 \in \f_1(\rho_1,\overline{\tau_1}),\quad \sigma_2 \in \f_2(\rho_2,\overline{\tau_2})$$
that agree only on the $t-1$ points where $\rho_1$ and $\rho_2$ agree. Moreover, by Claim \ref{claim:quasi-small-error} and our assumption (\ref{eq:lower-bound-n}) on $n$, it follows that $\f_1'(\rho_1',\overline{\tau_1'})$ and $\f_2'(\rho_2',\overline{\tau_2'})$ are both $(s,1+3\eta)$-quasiregular and that $\mu(\f_i'(\rho_i',\overline{\tau_i'})) \geq \tfrac{1}{2}\mu(\f_i(\rho_i,\overline{\tau_i}))$ for each $i\in \{1,2\}$. By construction, the matchings $M_1',M_2',N_1',N_2'$ associated with $\rho_1',\rho_2',\tau_1',\tau_2'$ respectively, satisfy the `good properties' above, and as an unlabelled graph, $M_1' \cup M_2'$ is obtained from $M_1 \cup M_2$ by replacing each odd-length path of type-2 edges by a type-1 edge.

\end{proof}

Armed with these three claims, we now describe our algorithm. Starting with the pair of families
 $$\mathcal{F}_1(\rho_1,\overline{\tau_1}) \subset S_n(\rho_1,\overline{\tau_1}),\quad \mathcal{F}_2(\rho_2,\overline{\tau_2}) \subset S_n(\rho_2,\overline{\tau_2})$$
defined in (\ref{eq:pi-defn}), we first apply Claim \ref{claim:cycles} (with $s=r+u+1$ and $\eta = 16\epsilon$) to each cycle of type-2 edges in succession, and then we apply Claim \ref{claim:even-paths} (with $s=r+u+1$ and $\eta = 16\epsilon$) to each even-length path of type-2 edges in succession (reducing $n$ by at most $O(\sqrt{n})$ after all of these applications, by virtue of (\ref{eq:domain-bound})). Abusing notation slightly (to avoid clutter), let
\begin{equation}\label{eq:second-pair} \mathcal{F}_1(\rho_1,\overline{\tau_1}) \subset S_{\tilde{n}}(\rho_1,\overline{\tau_1}),\quad \mathcal{F}_2(\rho_2,\overline{\tau_2}) \subset S_{\tilde{n}}(\rho_2,\overline{\tau_2})
\end{equation}
be the pair of families produced by this process, and let $M_1,M_2,N_1,N_2$ be the corresponding matchings. Then $n-\tilde{n} = O(\sqrt{n})$, the matchings $M_1,M_2,N_1,N_2$ still satisfy the `good properties' above, and $M_1 \cup M_2$ contains no cycle of type-2 edges and no even-length path of type-2 edges. Further, $\f_1(\rho_1,\overline{\tau_1})$ and $\f_2(\rho_2,\overline{\tau_2})$ are both $(r+u+1,1+16\epsilon)$-quasiregular, with measures greater than $\tfrac{1}{9}n^{-r}$, and $\rho_1$ and $\rho_2$ agree with one another in exactly $t-1$ places.

We can now apply Claim \ref{claim:odd-paths} to the pair of families (\ref{eq:second-pair}), with $s = r+u+1$, with $\eta = 16\epsilon$, with $\tilde{n} = n-O(\sqrt{n})$ in place of $n$, and with $\max\{b,K,Q\} = O(\sqrt{n})$ (again, by virtue of (\ref{eq:domain-bound})). Note that, by our choice of $\epsilon = n^{-1/3}$, the hypothesis 
$$\tilde{n} \geq b+\max\{Q^2,16/\eta^2\}$$
is indeed satisfied (provided $n$ is sufficiently large depending on $r$ and $t$). Writing $n' = \tilde{n}-K = n-O(\sqrt{n})$, we obtain a pair of bijections
$\rho_1',\rho_2'$ between subsets of $[n']$, a pair of bijections $\tau_1',\tau_2'$ between subsets of $[n']$, and a pair of families 
$$\f_1'(\rho_1',\overline{\tau_1'}) \subset S_{n'}(\rho_1',\overline{\tau_1'}),\ \f_2'(\rho_2',\overline{\tau_2'}) \subset S_{n'}(\rho_2',\overline{\tau_2'}),$$
which are both $(r+u+1,1+48\epsilon)$-quasiregular with measures greater than $\tfrac{1}{18}n^{-r} > \tfrac{1}{19}(n')^{-r}$ (the last inequality using the fact that $n'-n = O(\sqrt{n})$ and that $n$ is sufficiently large depending on $r$). Observe that $\rho_1'$ and $\rho_2'$ agree with one another at exactly $t-1$ places, that the matchings $M_1',M_2',N_1',N_2'$ associated to $\rho_1', \rho_2',\tau_1',\tau_2'$ respectively satisfy the `good properties' above, and in addition that $M_1' \cup M_2'$ consists only of type-1 edges. 

For notational convenience, we now write
$$\f_1'(\rho_1',\overline{\tau_1'}) =: \g_1(\theta_1,\overline{\phi_2}),\quad \f_2'(\rho_2',\overline{\tau_2'}) =:\g_2(\theta_2,\overline{\phi_1}).$$
Since $M_1' \cup M_2'$ consists only of type-1 edges, we can rewrite the pair
$$\g_1(\theta_1,\overline{\phi_2}), \quad \g_2(\theta_2, \overline{\phi_1})$$
as
$$\g_1(\psi \cup \phi_1,\overline{\phi_2}), \quad \g_2(\psi \cup \phi_2, \overline{\phi_1}),$$
where $\psi$ is the restriction of $\theta_1$ to the set of points where $\theta_1$ and $\theta_2$ agree, so that $|\Domain(\psi)|=t-1$. Observe that if two permutations $\sigma_1 \in \g_1(\psi \cup \phi_1,\overline{\phi_2})$ and $\sigma_2 \in \g_2(\psi \cup \phi_2, \overline{\phi_1})$ disagreed everywhere outside $\Domain(\psi)$, then there would exist two permutations in $\f$ that agree only where $\rho_1$ and $\rho_2$ agree (so in exactly $t-1$ places); this would contradict our assumption that $\f$ is $(t-1)$-intersection-free.

Given our pair of families
$$\g_1(\psi \cup \phi_1,\overline{\phi_2}) \subset S_{n'}(\psi \cup \phi_1,\overline{\phi_2}),\quad \g_2(\psi\cup \phi_2, \overline{\phi_1}) \subset S_{n'}(\psi \cup \phi_2,\overline{\phi_1}),$$
which are $(r+u+1,1+48\epsilon)$-quasiregular with measures greater than $\tfrac{1}{19}(n')^{-r}$, we observe that $S_{n'}(\psi)$ can be viewed as a copy of $S_{n'-t+1}$ (recall that $|\Domain(\psi)|=t-1$). Under this identification, writing $n'': = n'-t+1$, we obtain a pair of families
$$\g_1'(\phi_1,\overline{\phi_2}) \subset S_{n''}(\phi_1,\overline{\phi_2}),\ \g_2'(\phi_2,\overline{\phi_1}) \subset S_{n''}(\phi_2,\overline{\phi_1})$$
simply by deleting the domain and range of $\psi$ from each permutation in $\g_1(\psi \cup\phi_1,\overline{\phi_2})$ and $\g_2(\psi\cup \phi_2, \overline{\phi_1})$ respectively, and relabelling the ground set as $[n'']$ if necessary. Clearly, these new families are also $(r+u+1,1+48\epsilon)$-quasiregular, with measures greater than $\tfrac{1}{19}(n')^{-r} > \tfrac{1}{20}(n'')^{-r}$. Moreover, if two permutations $\sigma_1 \in \g_1'(\phi_1,\overline{\phi_2})$ and $\sigma_2 \in \g_2'(\phi_2, \overline{\phi_1})$ disagreed everywhere, then there would exist two permutations in $\f$ that agree only where $\rho_1$ and $\rho_2$ agree (so in exactly $t-1$ places); this would contradict our assumption that $\f$ is $(t-1)$-intersection-free.

The last step is to reduce to the case where $\phi_1$ and $\phi_2$ are empty; this may involve replacing $\mathcal{G}_1'$ and $\mathcal{G}_2'$ by fractional families, i.e.\ $[0,1]$-valued functions on $S_{\hat{n}}$, for some $\hat{n} \leq n''$. To this end, we define a generalisation of the `fixing' operators introduced by Cameron and Ku \cite{ck}. 
\begin{defn}
If $\pi_1:S_1\to T_1$ and $\pi_2:S_2 \to T_2$ are bijections with disjoint domains and disjoint ranges, let $\mathcal{S}_{\pi_{1},\overline{\pi_{2}}}\colon S_{n}\left(\pi_{1},\overline{\pi_{2}}\right)\to S_{n}\left(\pi_{1},\pi_{2}\right)$
be the operator defined as follows. If $S_2 = \{i_1,\ldots,i_\ell\}$ with $i_1 < \ldots < i_{\ell}$, and $\pi_2(i_k) = j_k$ for all $k \in [\ell]$, then 
$$\mathcal{S}_{\pi_{1},\overline{\pi_{2}}} := F_{\ell} \circ F_{\ell-1} \circ \ldots \circ F_{2} \circ F_1,$$
where $F_k(\sigma): = (j_k \ \sigma(i_k))\sigma$, for all $k \in [\ell]$ and all $\sigma \in S_n(\pi_1,\overline{\pi_2})$.
\end{defn} 
It is easy to check that the operators $F_k$ pairwise commute, so the restriction $i_1 < \ldots < i_{\ell}$ in the above definition is in fact redundant. We remark that the $\mathcal{S}_{\pi_{1},\overline{\pi_{2}}}$'s are a generalisation of the `fixing' operators introduced by Cameron and Ku in \cite{ck}: when $j_k = i_k$ for all $k$ and $\pi_1 = \emptyset$, we have $\mathcal{S}_{\pi_1,\overline{\pi_2}}(\sigma) = g_{i_1,\ldots,i_\ell}(\sigma)$ for all $\sigma \in S_n(\overline{\pi_2})$, using their notation.

We are now ready to define the function $f_{\mathcal{F},\pi_{1},\overline{\pi_{2}}}.$ 
\begin{defn}
Let $\f(\pi_1,\overline{\pi_2}) \subset S_n(\pi_1,\overline{\pi_2})$, where $\pi_1,\pi_2$ are as above. We define $\tilde{f}_{\f,\pi_{1},\overline{\pi_{2}}}\colon S_{n}\left(\pi_{1},\pi_{2}\right)\to\left[0,1\right]$
to be the function that sends a permutation $\tau\in S_{n}\left(\pi_{1},\pi_{2}\right)$
to the proportion of permutations in $\mathcal{S}_{\pi_{1},\overline{\pi_{2}}}^{-1}\left(\tau\right)$
that belong to the family $\f$. Equivalently,
$$\tilde{f}_{\f,\pi_{1},\overline{\pi_{2}}}(\tau) = \frac{|\s_{\pi_{1},\overline{\pi_{2}}}^{-1}\left(\tau\right)\cap \f|}{|\s_{\pi_{1},\overline{\pi_{2}}}^{-1}\left(\tau\right)|}\quad \forall \tau \in S_n(\pi_1,\pi_2).$$
\end{defn}

Observe that $\mathbb{E}[\tilde{f}_{\f,\pi_{1},\overline{\pi_{2}}}] = \mu(\f)$ for any family $\f \subset S_n(\pi_1,\overline{\pi_2})$, since $|\mathcal{S}_{\pi_{1},\overline{\pi_{2}}}^{-1}\left(\tau\right)|$ is the same for all permutations $\tau \in S_n(\pi_1,\pi_2)$.

We need the following easy observation.
\begin{observation}
\label{observation:swapping}
Let $s \in \mathbb{N}$ and $\alpha \geq 1$, and let $\pi_1,\pi_2$ be bijections with disjoint domains and disjoint ranges. Let $\f(\pi_1,\overline{\pi_2}) \subset S_n(\pi_1,\overline{\pi_2})$ and let $\g(\overline{\pi_1},\pi_2) \subset S_n(\overline{\pi_1},\pi_2)$. If every permutation in $\f(\pi_1,\overline{\pi_2})$ intersects every permutation in $\g(\overline{\pi_1},\pi_2)$, then the functions $f = \tilde{f}_{\f,\pi_{1},\overline{\pi_{2}}}:S_n(\pi_1,\pi_2) \to [0,1]$ and $g = \tilde{f}_{\g,\overline{\pi_{1}},\pi_{2}}:S_n(\pi_1,\pi_2) \to [0,1]$ satisfy $f(\sigma_1)g(\sigma_2)=0$ for all permutations $\sigma_1,\sigma_2 \in S_n(\pi_1,\pi_2)$ such that $\sigma_1$ and $\sigma_2$ intersect only on $\Domain(\pi_1) \cup \Domain(\pi_2)$. Moreover, if $\f(\pi_1,\overline{\pi_2})$ and $\g(\overline{\pi_1},\pi_2)$ are both $(s,\alpha)$-quasiregular, then so are $f$ and $g$.
\end{observation}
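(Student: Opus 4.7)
The plan is to prove the two statements separately, both via an analysis of the operator $\mathcal{S}_{\pi_1,\overline{\pi_2}}$ viewed as a product of commuting swaps $F_k$ (and symmetrically for $\mathcal{S}_{\overline{\pi_1},\pi_2}$).

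For the first (intersection) statement, I argue by contrapositive. Suppose $f(\tau_1)g(\tau_2) > 0$; then there exist $\sigma_1 \in \f \cap \mathcal{S}_{\pi_1,\overline{\pi_2}}^{-1}(\tau_1)$ and $\sigma_2 \in \g \cap \mathcal{S}_{\overline{\pi_1},\pi_2}^{-1}(\tau_2)$, and by hypothesis they intersect at some $m$ with $\sigma_1(m) = \sigma_2(m) =: v$. The conditions that $\sigma_1$ agrees with $\pi_1$ but disagrees with $\pi_2$, and $\sigma_2$ disagrees with $\pi_1$ but agrees with $\pi_2$, together with injectivity, force $m \notin \Domain(\pi_1) \cup \Domain(\pi_2)$ and $v \notin \Range(\pi_1) \cup \Range(\pi_2)$. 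The key fact is that each swap $F_k$ inside $\mathcal{S}_{\pi_1,\overline{\pi_2}}$ only affects the two positions currently holding the values $j_k \in \Range(\pi_2)$ and $\sigma(i_k)$; since $m \notin \Domain(\pi_2)$ and, by a short induction on the sequence of swaps applied, the value at $m$ remains $v \notin \Range(\pi_2)$ throughout, the position $m$ is untouched by every $F_k$. Hence $\tau_1(m) = v$, and symmetrically $\tau_2(m) = v$, so $\tau_1$ and $\tau_2$ agree at the point $m \notin \Domain(\pi_1) \cup \Domain(\pi_2)$, contradicting the hypothesis.

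For the second (quasiregularity) statement, I would use that $\mathbb{E}[f(\pi)] = \mu_E(\f)$, where $E := \mathcal{S}_{\pi_1,\overline{\pi_2}}^{-1}(S_n(\pi_1,\pi_2,\pi))$ (with $\mu_E$ the uniform measure on $E$); this follows because the fibres of $\mathcal{S}_{\pi_1,\overline{\pi_2}}$ all have the same size. Then decompose $E$ as a disjoint union $\bigsqcup_C S_n(\pi_1,\overline{\pi_2},\pi^{(C)})$ indexed by \emph{swap chain} configurations: for each $i \in \Domain(\pi)$, one records either that $\sigma(i) = \pi(i)$ (the trivial case, with $\pi^{(C)} = \pi$) or a chain $\sigma(i) = j_{k_1},\ \sigma(i_{k_1}) = j_{k_2},\ \ldots,\ \sigma(i_{k_r}) = \pi(i)$ of additional constraints, with distinct chains disjoint in $\Domain(\pi_2)$. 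Writing $\mathbb{E}[f(\pi)]$ as the size-weighted average of the densities $\mu(\f(\pi_1,\overline{\pi_2},\pi^{(C)}))$ and applying $(s,\alpha)$-quasiregularity of $\f$ to the trivial slice gives the required bound $\alpha\mu(\f) = \alpha\mathbb{E}[f]$ up to a small error; the argument for $g$ is symmetric. The main obstacle is the nontrivial-chain slices: their bijections $\pi^{(C)}$ have domain size exceeding $s$, so $(s,\alpha)$-quasiregularity does not directly bound them. The remedy is that each chain-link shrinks the slice by a factor of order $1/n$ relative to the trivial slice, so the cumulative contribution of nontrivial configurations is absorbed when $n$ is large relative to $s$ and $|\Domain(\pi_2)|$; verifying this absorption carefully is the main technical work.
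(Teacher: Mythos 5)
The paper states this as an ``Observation'' with no proof given, so there is no paper argument to compare against; I will assess your argument on its own terms.

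Your treatment of the intersection statement is correct. The reduction to the point $(m,v)$ with $m\notin\Domain(\pi_1)\cup\Domain(\pi_2)$ and $v\notin\Range(\pi_1)\cup\Range(\pi_2)$ is right (you also need injectivity of $\sigma_1,\sigma_2$ to get the range conditions, which you use), and the induction showing that each $F_k$ fixes the value at $m$ — because $m\neq i_k$ and the current value $v$ is never equal to $j_k\in\Range(\pi_2)$ — is exactly the right invariant.

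For the quasiregularity statement there is a genuine gap. You correctly identify that $\mathbb{E}[f(\pi)] = \mu_E(\f)$ where $E = \mathcal{S}_{\pi_1,\overline{\pi_2}}^{-1}(S_n(\pi_1,\pi_2,\pi))$ decomposes into the ``trivial'' slice $S_n(\pi_1,\overline{\pi_2},\pi)$ plus nontrivial-chain slices whose defining bijections $\pi^{(C)}$ have domain of size $s+m_C$ with $m_C\geq1$. But $(s,\alpha)$-quasiregularity of $\f(\pi_1,\overline{\pi_2})$ gives you \emph{no} bound on the density of $\f$ inside a slice whose bijection has domain size strictly greater than $s$. Restricting to a size-$s$ sub-bijection $\pi''\subset\pi^{(C)}$ only yields $\mu(\f(\pi_1,\overline{\pi_2},\pi^{(C)})) < \alpha\mu(\f)\cdot n^{m_C}$, which exactly cancels the $n^{-m_C}$ smallness of $|E_C|/|E|$; the alternative trivial bound $\mu_{E_C}(\f)\leq 1$ gives an \emph{additive} error of order $m_2 s/n$ where $m_2=|\Domain(\pi_2)|$. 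In the very simplest example ($\pi_1=\emptyset$, $\pi_2=\{1\mapsto1\}$, $\pi=\{2\mapsto2\}$, $s=1$) one computes $E = \{\sigma:\sigma(1)\neq1,\,\sigma(2)=2\}\sqcup\{\sigma:\sigma(1)=2,\,\sigma(2)=1\}$, and a $(1,\alpha)$-quasiregular family can sit disproportionately on the second piece. So your ``each chain-link shrinks the slice by $1/n$, hence absorption'' step does not close: the best you get from this route is $\mathbb{E}[f(\pi)] < \alpha\mathbb{E}[f] + O(m_2 s/n)$, or a multiplicative loss, neither of which recovers $(s,\alpha)$ exactly — and in the paper's application $\mathbb{E}[f]$ is as small as $n^{-r}$ with $m_2=O(\sqrt n)$, so the additive slack is not negligible. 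To repair this you would either need the quasiregularity hypothesis to hold at all sizes up to $s+m_2$ (so the chain slices themselves are directly controlled), or accept a weakening of the conclusion's parameter $\alpha$, or find a structurally different argument that avoids summing over chain configurations one by one.
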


We now return to the proof. Let
\begin{align*} f'' & = \tilde{f}_{\g_1',\phi_{1},\overline{\phi_{2}}}\colon S_{n''}\left(\phi_{1},\phi_{2}\right)\to\left[0,1\right],\\
g'' & = \tilde{g}_{\g_2',\phi_{2},\overline{\phi_{1}}}\colon S_{n''}\left(\phi_{1},\phi_{2}\right)\to\left[0,1\right].
\end{align*}
Noting that $S_{n''}\left(\phi_{1},\phi_{2}\right)$ can be viewed as a copy of $S_{\hat{n}}$, where $\hat{n}:=n''-|\Domain(\phi_1)|-|\Domain(\phi_2)| = n-O(\sqrt{n})$, we produce from $f''$ and $g''$ two functions $f,g: S_{\hat{n}} \to [0,1]$ by deleting the domains and ranges of $\phi_1$ and $\phi_2$ and relabelling the ground set as $[\hat{n}]$ if necessary. It follows from Observation \ref{observation:swapping} that $f$ and $g$ are $(r+u+1,1+48\epsilon)$-quasiregular; clearly, they have expectations greater than $\tfrac{1}{20}(n'')^{-r} > \tfrac{1}{21}(\hat{n})^{-r}$. Finally, for any two permutations $\sigma_1,\sigma_2 \in S_{\hat{n}}$ disagreeing everywhere, we have $f(\sigma_1)g(\sigma_2)=0$.

Since $f$ and $g$ are $(r+u+1,1+48\epsilon)$-quasiregular, they are $(r+u+1,144\epsilon)$-quasirandom (and therefore $(r,144\epsilon)$-quasirandom), by Lemma \ref{lem:quasireg-quasirandom}. Applying Lemma \ref{lem:fourier} with $f_1=f$ and $f_2=g$ now yields a contradiction, proving Theorem \ref{thm:junta-approximation}.

\section{Proof of Theorem \ref{thm:extremal-forbidden}}
\label{sec:deduction}
In this section, we deduce Theorem \ref{thm:extremal-forbidden} from our junta approximation theorem, via a purely combinatorial argument.
\begin{proof}[Proof of Theorem \ref{thm:extremal-forbidden}]
Let $n \geq n_0$, where $n_0 = n_0(t) \in \mathbb{N}$ is to be chosen later. Let $\f \subset S_n$ be $(t-1)$-intersection-free with $|\f| = (n-t)!$. Let $r = r(t) \in \mathbb{N}$ to be chosen later (in fact, we may choose $r = t+1$). By Theorem \ref{thm:junta-approximation}, there exists a $t$-intersecting $C$-junta $\j \subset S_n$ such that $|\f \setminus \j| \leq Cn!/n^r$, where $C = C(r,t) \in \mathbb{N}$. Let $\j = \langle \pi_1,\ldots,\pi_l\rangle$, where $\pi_i :S_i \to T_i$ is a bijection for all $i \in [l]$ and $S_i,T_i \subset [n]$ for all $i \in [l]$. Note that $l \leq C$ and that $|S_i| \leq C$ for all $i \in [l]$. Since $\j$ is $t$-intersecting, provided $n_0 \geq t+1$ we must have $|\Domain(\pi_i)| \geq t$ for all $i \in [l]$. Suppose for a contradiction that $|\Domain(\pi_i)| \geq t+1$ for all $i \in [l]$. Then
$$|\j| \leq \sum_{i=1}^{l} |\langle \pi_i \rangle| \leq l(n-t-1)! \leq C(n-t-1)!,$$
so $|\f \setminus \j| \geq |\f|-|\j| \geq (n-t)!- C(n-t-1)! > (n-t)!/2 > Cn!/n^r$ (provided $r \geq t+1$ and $n_0$ is sufficiently large depending on $t$), a contradiction. Therefore, there exists $i \in [l]$ such that $|\Domain(\pi_i)| = t$; without loss of generality, we may assume that $|\Domain(\pi_1)| = t$. Then, since $\j$ is $t$-intersecting, provided $n_0 \geq t+1$ we must have $\pi_i = \pi_1$ for all $i \in [l]$, and therefore $\j = \langle \pi_1 \rangle$. By considering $\tau_1 \f \tau_2$ for appropriate $\tau_1,\tau_2 \in S_n$, we may assume that $\pi_1$ is the identity on $[t]$, i.e.\ that $\j = \{\sigma \in S_n:\ \sigma(i)=i\ \forall i \in [t]\}$. Now suppose, for a contradiction, that $\f \neq \j$. Then there exists some permutation $\rho \in \f \setminus \j$. Let $s$ be the number of fixed points of $\rho$ in $[t]$; then $0 \leq s \leq t-1$. Let $V =\rho^{-1}([t]) \setminus [t]$ and let $v = |V|$; note that $v \leq t$. We claim that the number of permutations in $\j$ that agree with $\rho$ in exactly $t-1$ places is at least
$$N: = {n-t-v \choose t-s-1}(n-2t+s+1)_{(v)}d_{n-2t+s-v+1}.$$
Indeed, to choose such a permutation $\sigma \in \j$, we can first choose a set $U$ of $t-s-1$ points of $\{t+1,\ldots,n\} \setminus V$ on which $\sigma$ agrees pointwise with $\rho$, we can then choose the images of the points in $V$ arbitrarily (there are $(n-2t+s+1)(n-2t+s-1)\ldots(n-2t+s-v+2) = (n-2t+s+1)_{(v)}$ such choices), and finally we can choose the images of the points in $\{t+1,\ldots,n\} \setminus (U \cup V)$ in such a way that $\sigma$ disagrees with $\rho$ everywhere on $\{t+1,\ldots,n\} \setminus (U \cup V)$ (there are $d_{n-2t+s-v+1}$ such choices, where as before, $d_k$ denotes the number of derangements of a $k$-element set). Since $\f$ is $(t-1)$-intersection-free, none of these permutations can be in $\f$, so $|\j \setminus \f| \geq N$. We have
\begin{align*}  N & \geq \left(\frac{n-2t}{t-s-1}\right)^{t-s-1} (n-2t+s+v+2)^{v} (1/e+o(1))(n-2t+s-v+1)!\\
& \geq \left(\frac{n-2t}{t-1}\right)^{t-s-1} (n-2t+2)^{v} (1/e+o(1))(n-2t+s-v+1)!\\
& = \Omega_t(1) n^{t-s-1} n^{v} n! / n^{2t-s+v-1}\\
& =  \Omega_t(1) n! / n^{t}.\end{align*}
Hence, $|\j \setminus \f| =\Theta_t(1) n! / n^{t}$. Since $|\f| = (n-t)! = |\j|$, it follows that $|\f \setminus \j| = |\j \setminus \f| =\Theta_t(1) n! / n^{t} > Cn!/n^r$ (provided $r \geq t+1$ and $n_0$ is sufficiently large depending on $t$), a contradiction.
\end{proof}

We now use Theorem \ref{thm:junta-approximation} to deduce Theorem \ref{thm:stability}, our 1\% stability result for families of permutations with a forbidden intersection.
\begin{proof}[Proof of Theorem \ref{thm:stability}]
This is very similar to the first part of the proof of Theorem \ref{thm:extremal-forbidden}. Let $K = K(r,t) >0$ to be chosen later. By an appropriate choice of $K$, we may assume that $n \geq t+1$. Let $\f \subset S_n$ be $(t-1)$-intersection-free with $|\f| \geq K(n-t-1)!$. By Theorem \ref{thm:junta-approximation}, there exists a $t$-intersecting $C$-junta $\j \subset S_n$ such that $|\f \setminus \j| \leq Cn!/n^r$, where $C = C(r,t) \in \mathbb{N}$. Let $\j = \langle \pi_1,\ldots,\pi_l\rangle$, where $\pi_i :S_i \to T_i$ is a bijection for all $i \in [l]$ and $S_i,T_i \subset [n]$ for all $i \in [l]$. Since $\j$ is $t$-intersecting and $n \geq t+1$, we must have $|\Domain(\pi_i)| \geq t$ for all $i \in [l]$. Suppose for a contradiction that $|\Domain(\pi_i)| \geq t+1$ for all $i \in [l]$. Then
$$|\f \setminus \j| \geq |\f| - |\j| \geq K(n-t-1)!- \sum_{i=1}^{l}|\langle \pi_i \rangle| \geq K(n-t-1)! - C(n-t-1)! \geq C(n-t-1)! > Cn!/n^{t+1} \geq Cn!/n^r$$
provided $K \geq 2C$, a contradiction. Therefore, there exists $i \in [l]$ such that $|\Domain(\pi_i)| = t$; without loss of generality, we may assume that $|\Domain(\pi_1)| = t$. Then, since $\j$ is $t$-intersecting and $n \geq t+1$, we must have $\pi_i = \pi_1$ for all $i \in [l]$, and therefore $\j = \langle \pi_1 \rangle$. We have
$$|\f \setminus \langle \pi_1 \rangle| \leq Cn!/n^r \leq K(n-r)!$$
provided $K \geq C$, proving the theorem.
\end{proof}

\section{Conclusion and open problems}
\label{sec:conc}
Two natural open problems are to determine, for each pair of integers $n$ and $t$, the largest possible $t$-intersecting (respectively $(t-1)$-intersection-free) families of permutations in $S_n$. The following conjecture in \cite{efp}, as to the former, remains open.
\begin{conj}
\label{conj:2t}
For any $n,t \in \mathbb{N}$, a maximum-sized \(t\)-intersecting family in \(S_{n}\) must be a double translate of one of the families
\[\mathcal{F}_{i} := \{\sigma \in S_{n}:\ \sigma \textrm{ has at least } t+i \textrm{ fixed points in } [t+2i]\}\ (0 \leq i \leq (n-t)/2),\]
i.e.\ it must be of the form \(\pi \mathcal{F}_i \tau\), for some \(\pi,\tau \in S_n\).
\end{conj}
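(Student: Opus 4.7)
My plan is to combine the junta approximation theorem (Theorem \ref{thm:junta-approximation}) with a classification of maximal \(t\)-intersecting juntas in \(S_n\) and a stability argument, at least in the regime where \(n\) is large compared to \(t\); the small-\(n\) regime appears to require substantially new ideas.

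First, I would attempt to classify the maximal \(t\)-intersecting \(C\)-juntas \(\mathcal{J}=\langle\pi_1,\ldots,\pi_l\rangle\subset S_n\): the goal is to show that every such junta is contained in a double translate \(\pi\mathcal{F}_i\tau\) of one of the families \(\mathcal{F}_i\). The \(t\)-intersecting condition forces any two of the partial bijections \(\pi_a,\pi_b\) to agree on at least \(t\) points; treating each \(\pi_a\) as a partial matching (graph) in \([n]\times[n]\), this becomes a question about \(t\)-intersecting families of partial matchings, which can be viewed as a bipartite analogue of the Ahlswede-Khachatrian theorem. If this classification holds, the remainder of the argument for large \(n\) should proceed as follows.

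Given a \(t\)-intersecting family \(\mathcal{F}\subset S_n\) of maximum size, since any \(t\)-intersecting family is in particular \((t-1)\)-intersection-free, I would apply Theorem \ref{thm:junta-approximation} with \(r=t+1\) (for \(n\) sufficiently large depending on \(t\)) to obtain a \(t\)-intersecting \(C\)-junta \(\mathcal{J}\) with \(|\mathcal{F}\setminus\mathcal{J}|\leq Cn!/n^{t+1}\). By the classification above, \(\mathcal{J}\subseteq\pi\mathcal{F}_j\tau\) for some \(\pi,\tau\in S_n\) and some \(j\), and so \(|\mathcal{F}_j|\geq|\mathcal{F}|-Cn!/n^{t+1}\). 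A stability-style counting argument, in the spirit of the deduction of Theorem \ref{thm:extremal-forbidden} from Theorem \ref{thm:junta-approximation} carried out in Section \ref{sec:deduction}, should then upgrade this to the exact containment \(\mathcal{F}\subseteq\pi\mathcal{F}_j\tau\): one fixes an alleged \(\sigma\in\mathcal{F}\setminus\pi\mathcal{F}_j\tau\), counts the permutations in \(\pi\mathcal{F}_j\tau\) whose intersection with \(\sigma\) has size less than \(t\), and shows that this count exceeds \(Cn!/n^{t+1}\), yielding a contradiction. Maximality of \(|\mathcal{F}|\) and the obvious inequality \(|\mathcal{F}|\geq\max_i|\mathcal{F}_i|\) would then force equality, i.e.\ \(\mathcal{F}=\pi\mathcal{F}_j\tau\) with \(j\) chosen to maximise \(|\mathcal{F}_j|\).

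The main obstacle is the classification step: while the case \(l=1\) is elementary, and the situation for \(t=1\) can be handled by graph-theoretic methods similar to those of Cameron-Ku, the general case is essentially an Ahlswede-Khachatrian-type result for partial matchings, which to my knowledge is not in the literature and which may well be as hard as the original conjecture. A secondary obstacle is the regime of small \(n\) (with \(n\) comparable to \(t\)), which is precisely where the families \(\mathcal{F}_i\) with \(i>0\) can be strictly larger than \(\mathcal{F}_0\); Theorem \ref{thm:junta-approximation} gives no information in this regime, and one would likely need either compression techniques adapted to \(S_n\) or a new spectral argument exploiting the finer structure of the Cayley graph on \(S_n\) generated by permutations with fewer than \(t\) fixed points. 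This small-\(n\) regime is in a sense the essential new content of the conjecture beyond Theorem \ref{thm:extremal-forbidden}, and I would expect it to be the hardest part of any proof.
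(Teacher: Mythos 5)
This statement is an \emph{open conjecture} in the paper (Conjecture \ref{conj:2t}), not a theorem; the paper explicitly notes at the end of Section \ref{sec:conc} that its arguments only yield the conclusion for $n \geq \exp(Ct\log t)$. Your proposal correctly recognises that Theorem \ref{thm:junta-approximation} cannot reach the regime $n = \Theta(t)$, and that this small-$n$ regime is exactly where the families $\mathcal{F}_i$ with $i>0$ come into play; that observation is the crux of why the conjecture is open.

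Two further remarks. First, your proposed classification step (``every maximal $t$-intersecting $C$-junta in $S_n$ is contained in a double translate of some $\mathcal{F}_i$'') is unnecessary for the large-$n$ regime where the junta machinery applies: as the paper's deduction in Section \ref{sec:deduction} shows, once $n$ is large enough relative to $t$ and the junta complexity $C$, any $t$-intersecting $C$-junta covering all but $O(n!/n^{t+1})$ of a family of size $\geq (n-t)!$ must already be a single $t$-star $\langle\pi_1\rangle$ (since otherwise $|\j| \leq C(n-t-1)!$ is too small), so the elaborate Ahlswede--Khachatrian-for-partial-matchings classification you float is beside the point; your large-$n$ argument would, after this simplification, essentially reproduce the paper's proof of Theorem \ref{thm:extremal-forbidden}. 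Second, and more importantly, that large-$n$ case (which only ever produces $\mathcal{F}_0$) is already a consequence of Theorem \ref{thm:extremal-forbidden}, so your strategy does not make progress on what is genuinely new in Conjecture \ref{conj:2t}, namely the range $n = O(t)$. You identify this gap yourself, but it should be emphasised that it is not a ``secondary obstacle'': it is the whole content of the conjecture beyond what the paper proves, and a fundamentally different method (perhaps compression or a sharper spectral analysis, as you suggest) would be required.
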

In fact, even the following conjecture remains open.
\begin{conj}
\label{conj:large}
For any \(n,t \in \mathbb{N}\) with \(n > 2t\), the maximum-sized \(t\)-intersecting families in \(S_n\) are the \(t\)-stars.
\end{conj}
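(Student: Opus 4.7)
The plan is to combine the junta approximation result of this paper with a refined spectral analysis on a suitable Cayley graph to push the large-$n$ threshold all the way down to the sharp value $n > 2t$.

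The first step is to invoke Theorem~\ref{thm:extremal-forbidden} to dispose of the range $n \geq n_0(t)$ for some (possibly enormous) $n_0(t)$. What remains is the intermediate range $2t < n < n_0(t)$. For this range I would analyse the normal Cayley graph $D_{n,t}$ on $S_n$ whose connection set consists of all permutations agreeing with the identity on fewer than $t$ points; a $t$-intersecting family is exactly an independent set in $D_{n,t}$. By the Frobenius--Schur formula combined with the determinantal formula (\ref{eq:determinantalformula}), the eigenvalues of $D_{n,t}$ are explicitly indexed by partitions $\alpha \vdash n$. The aim is to show that Hoffman's ratio bound $\alpha(D_{n,t}) \leq n! \cdot (-\lambda_{\min})/(d - \lambda_{\min})$ evaluates to $(n-t)!$ for every $n > 2t$, and that the only $\{0,1\}$-valued functions achieving equality are indicators of $t$-stars.

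For the structural step, I would exploit the fact that equality in the Hoffman bound forces the indicator of an extremal family $\mathcal{F}$ to lie in the direct sum of the trivial module with the $\lambda_{\min}$-eigenspace. A natural guess is that $\lambda_{\min}$ is attained on the modules $W_\alpha$ associated to partitions of shape $(n-s, \cdot)$ with $s \leq t$, so the extremal indicator lies inside $\bigoplus_{\alpha_1 \geq n-t} W_\alpha$. The $\{0,1\}$-valued functions in this subspace should be characterisable directly as indicators of unions of $t$-stars, from which the $t$-intersection hypothesis would force $\mathcal{F}$ to consist of a single $t$-star. An auxiliary ingredient in this step is a careful eigenvalue computation, along the lines of Lemma~\ref{lem:transpose} and Corollary~\ref{cor:low-evals}, to rule out minimal eigenvalues coming from partitions with small largest part.

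The principal obstacle is the tightness of the Hoffman bound in the regime $n = 2t + O(1)$. As $n \searrow 2t$, the competing families $\mathcal{F}_i$ of Conjecture~\ref{conj:2t} become increasingly close in size to the $t$-star $\mathcal{F}_0$, which translates into eigenvalues of $D_{n,t}$ that nearly saturate any ratio bound. This strongly suggests that a purely spectral approach will \emph{not} suffice at the sharp threshold, and that a new ingredient --- perhaps a hypercontractive inequality on $S_n$ of the type developed in the authors' forthcoming work with Kindler, or a compression / stability argument seeded by the large-$n$ conclusion from Theorem~\ref{thm:extremal-forbidden} and then propagated downward in $n$ via a section-by-section analysis of $\{\sigma \in \mathcal{F} : \sigma(n)=j\}$ --- will be needed to fully settle the conjecture.
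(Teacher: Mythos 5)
This statement is Conjecture~\ref{conj:large}, which the paper explicitly labels as an \emph{open conjecture}: the preceding sentence reads ``In fact, even the following conjecture remains open.'' There is therefore no proof in the paper to compare against, and your proposal does not close the gap either --- as you yourself concede in the final paragraph, the spectral approach ``will \emph{not} suffice at the sharp threshold,'' and you defer to an unspecified ``new ingredient.'' What you have written is a research programme, not a proof.

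Beyond that structural point, there is a concrete technical obstruction earlier in your outline than you acknowledge. You propose to apply the Hoffman ratio bound to the unweighted normal Cayley graph $D_{n,t}$ whose connection set is the permutations with fewer than $t$ fixed points, and to ``show that Hoffman's ratio bound\ldots evaluates to $(n-t)!$ for every $n > 2t$.'' This already fails for $t \geq 2$ even when $n$ is large: the eigenvalues of the unweighted graph do not satisfy the identity $|\lambda_{\min}|/(d+|\lambda_{\min}|)=(n-t)!/n!$ that Hoffman equality requires. This is precisely why \cite{efp} did not work with the Cayley graph directly but instead constructed a \emph{weighted} pseudo-adjacency matrix supported on the same conjugacy classes, with weights chosen so that the desired eigenvalue identity is forced on the $W_\alpha$ with $\alpha_1 \geq n-t$ and so that all remaining eigenvalues lie strictly between $\lambda_{\min}$ and $d$. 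Even after that substantial extra work, the argument in \cite{efp} requires $n$ doubly exponential in $t$, and the arguments of the present paper only reach $n \geq \exp(Ct\log t)$ (see Section~\ref{sec:conc}). So the intermediate regime $2t < n < n_0(t)$ is not a finite check to be swept up; it is the entire content of the conjecture, and neither the unweighted Hoffman bound nor the pseudo-adjacency construction is known to handle it. Your proposed structural step (equality forces the indicator into $\bigoplus_{\alpha_1 \geq n-t} W_\alpha$, and Boolean functions there are unions of $t$-stars) is the correct shape of the endgame, but it is only available once tight Hoffman equality is established, which is exactly the part that is open.
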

The arguments in this paper imply the conclusion of Conjecture \ref{conj:2t} only for $n \geq \exp(Ct \log t)$, for some absolute constant $C>0$. (On the other hand, the arguments in \cite{efp} require $n$ to be doubly exponential in $t$.) We believe that entirely new techniques will be required to deal with the case where $n = \Theta(t)$.

We note that, when $t$ is comparable to $n$, the largest $(t-1)$-intersection-free subfamilies of $S_n$ are very much larger than the largest $t$-intersecting subfamilies of $S_n$. Indeed, for any $t \leq n \in \mathbb{N}$, a $t$-intersecting subfamily of $S_n$ trivially has size at most ${n \choose t} (n-t)!$, the latter being the number of permutations that $t$-intersect a fixed permutation. On the other hand, a $(t-1)$-intersection-free subfamily of $S_n$ is precisely an independent set in the Cayley graph on $S_n$ generated by all the permutations with exactly $t-1$ fixed points; since this graph is regular of degree ${n \choose t-1}d_{n-t+1}$, by Tur\'an's theorem it has an independent set of size at least
$$\frac{n!}{{n \choose t-1}d_{n-t+1}+1},$$
which is $\gg {n \choose t} (n-t)!$ provided $t \geq (1+\delta)n/2$, $\delta >0$ and $n$ is sufficiently large depending on $\delta$. At the extreme, when $t=n-1$, the largest $(t-1)$-intersection-free subfamilies of $S_n$ are precisely the maximum-sized independent sets in the Cayley graph generated by the transpositions, i.e.\ they are precisely $A_n$ and $S_n \setminus A_n$, whereas the largest $t$-intersecting subfamilies are precisely the singletons. It would be interesting to determine the value of $n$ (for each $t$) for which the largest $(t-1)$-intersection-free subfamilies of $S_n$ are the same as the largest $t$-intersecting subfamilies.

\subsubsection*{Acknowledgements}
The authors would like to thank Eoin Long for helpful discussions.

\end{document}